\title{\TITLE}
\author{\AUTHOR}
\begin{document}

\maketitle

\begin{abstract}
  We show that the essentially algebraic theory of generalized
algebraic theories, regarded as a category with finite limits, has a
universal exponentiable arrow in the sense that any exponentiable
arrow in any category with finite limits is the image of the
universal exponentiable arrow by an essentially unique functor.


\end{abstract}

\section{Introduction}
\label{sec:introduction}

An arrow in a category with finite limits is said to be
\emph{exponentiable} if the pullback functor along the arrow has a
right adjoint. In this paper we construct a \emph{universal}
exponentiable arrow in the sense that any exponentiable arrow in any
category with finite limits is the image of the universal
exponentiable arrow by an essentially unique functor.

The universal exponentiable arrow comes from the theory of
\emph{generalized algebraic theories}
\parencite{cartmell1978generalised} which are equational theories
written in the dependent type theory without any type
constructors. Let \(\thgat\) be the opposite of the category of finite
generalized algebraic theories and (equivalence classes of)
interpretations between them. The category \(\thgat\) has finite
limits. Our main result is that the category \(\thgat\) has a
universal exponentiable arrow (\cref{thm:gat-ump}).

For an exponentiable arrow \(\pr : \tot \to \bas\) in a category
\(\cart\) with finite limits, the \emph{associated polynomial functor}
\(\poly_{\pr} : \cart \to \cart\)
\parencite{gambino2013polynomial,weber2015polynomials} is of much
interest. For an object \(\obj \in \cart\), the object
\(\poly_{\pr}\obj\) is known to satisfy the universal property of the
\emph{partial product} of \(\obj\) over \(\pr\), and conversely, if
all partial products over \(\pr\) exist then \(\pr\) is exponentiable
\parencite{dyckhoff1987exponentiable,niefield1982cartesianness}. In
\cref{sec:exponentiable-arrows} we give an algebraic characterization
of the polynomial functor \(\poly_{\pr}\). Precisely, we show that if
an endofunctor \(\poly : \cart \to \cart\) is equipped with certain
natural transformations, then \(\poly\obj\) for an object
\(\obj \in \cart\) is the partial product of \(\obj\) over \(\pr\),
and thus \(\pr\) is exponentiable and \(\poly\) must be isomorphic to
the associated polynomial functor \(\poly_{\pr}\).

Using results in \cref{sec:exponentiable-arrows}, we construct an
exponentiable arrow in \(\thgat\) in \cref{sec:depend-type-theor}. In
fact, we find an exponentiable arrow for any type theory satisfying
certain mild assumptions. We view type theories as logical frameworks
\parencite{harper1993framework,nordstrom2001martin-lof}, which are
frameworks for defining \emph{theories}, and for each type theory
\(\tth\), we obtain a category of \emph{\(\tth\)-theories} and
interpretations between them. Let \(\thdtt_{\tth}\) be the opposite of
the category of finite \(\tth\)-theories. The category
\(\thdtt_{\tth}\) always has a special arrow
\(\typeof_{0} : \El_{0} \to \Ty_{0}\), where \(\Ty_{0}\) is the theory
generated by a constant type and \(\El_{0}\) is the theory generated
by a constant type and a constant term of the type. We show that the
structural rules of \emph{weakening}, \emph{projection} and
\emph{substitution} yield a polynomial functor
\(\poly_{\typeof_{0}} : \thdtt_{\tth} \to \thdtt_{\tth}\) for
\(\typeof_{0}\) (\cref{prop:dtt-family-structure}). Consequently,
\(\typeof_{0}\) is exponentiable. When \(\tth\) is the dependent type
theory without any type constructors, we show in
\cref{sec:univ-expon-arrow} that the exponentiable arrow in
\(\thdtt_{\tth} = \thgat\) is, moreover, a universal exponentiable
arrow.

\subsection{Related Work}
\label{sec:related-work}

This work was started as part of a categorical approach to a general
notion of a type theory given by the author
\parencite{uemura2019framework}, but it turned out that the
construction of the universal exponentiable arrow is interesting in
itself, so the author decided to write a separate paper. In that paper
\parencite{uemura2019framework}, the author explained from a semantic
point of view that a type theory can be identified with a category
equipped with a class of exponentiable arrows. This paper provides a
syntactic justification for this definition of a type theory:
exponentiable arrows naturally appear in categories of theories.

We call the category \(\thgat\) the \emph{essentially algebraic theory
  of generalized algebraic theories} because, by the Gabriel-Ulmer
duality \parencite{gabriel1971lokal}, the category of generalized
algebraic theories is equivalent to the category of ``models of
\(\thgat\)'', that is, functors \(\thgat \to \Set\) preserving finite
limits. The view of theories as categories originates from Lawvere's
functorial semantics of algebraic theories
\parencite{lawvere2004functorial}. There are several descriptions of
the essentially algebraic theory of generalized algebraic
theories. \Textcite{cartmell1978generalised} showed that it is
equivalent to the essentially algebraic theory of contextual
categories. \Textcite{isaev2017algebraic,voevodsky2014b-system}
proposed alternative essentially algebraic theories which have sorts
of types, sorts of terms and operator symbols for weakening,
projection and substitution. \Textcite{garner2015combinatorial}
constructed a monad on a presheaf category whose algebras are the
generalized algebraic theories. Our contribution is to give a simple
universal property of the essentially algebraic theory of generalized
algebraic theories: it is the initial essentially algebraic theory
with an exponentiable arrow.

\Textcite{fiore2010second,fiore2014functorial} used a universal exponential object to
give functorial semantics of algebraic theories in languages with
variable binding. Our universal exponentiable arrow provides a form of
functorial semantics of type theories. Indeed, a natural model of type
theory \parencite{awodey2018natural,fiore2012discrete} can be regarded
as a functor from \(\thgat\) to a presheaf category preserving finite
limits and pushforwards along \(\typeof_{0}\). See
\parencite{uemura2019framework} for details of this style of
functorial semantics of type theories.

Polynomial functors are extensively studied in a wide range of areas
of mathematics and computer science. We refer the reader to
\parencite{gambino2013polynomial} for general information. Our
characterization of the polynomial functor \(\poly_{\pr}\) associated
to \(\pr : \tot \to \bas\) given in \cref{sec:exponentiable-arrows} is
based on the equivalence of the exponentiability of \(\pr\) and the
existence of a right adjoint of \((\argu \times_{\bas} \tot)\)
\parencite{niefield1982cartesianness}. The class of polynomial
functors is known to be characterized as the class of local fibred
right adjoints \parencite{kock2013local}. One application of
polynomial functors to the study of dependent type theory is the
semantics of inductive types
\parencite{moerdijk2000wellfounded,gambino2004wellfounded,abbott2005containers}. The
polynomial functor associated to the universal exponentiable arrow
\(\typeof_{0}\) is related to the use of polynomial functors for
modeling type constructors on natural models
\parencite{awodey2018natural,newstead2018thesis}; see
\cref{rem:natural-model}.

Exponentiable morphisms have been studied especially in categories of
spaces
\parencite{niefield1982cartesianness,niefield2001exponentiable}.
Exponentiability in categories of theories has received less
attention, but some exponentiable morphisms of theories are known. For
example, classifying toposes of coherent theories over a topos
\(\topos\) are exponentiable in the (\(2\)-)category of bounded
\(\topos\)-toposes and geometric morphisms over \(\topos\)
\parencite{johnstone2002sketches}.

\section{Exponentiable Arrows and Polynomial Functors}
\label{sec:exponentiable-arrows}

In this section we recall the definition of an exponentiable arrow and
show that an arrow is exponentiable if and only if there exists a
polynomial functor for it (\cref{thm:family-repr}).

\begin{definition}
  A \emph{cartesian category} is a category that has finite limits. A
  \emph{cartesian functor} between cartesian categories is a functor
  that preserves finite limits. For cartesian categories \(\cart\) and
  \(\cartI\), we denote by \(\Cart(\cart, \cartI)\) the category of
  cartesian functors \(\cart \to \cartI\) and natural
  transformations between them.
\end{definition}

\begin{notation}
  Let \(\cart\) be a cartesian category. For an arrow
  \(\pr : \tot \to \bas\) in \(\cart\), we denote by
  \(\pr^{*} : \cart/\bas \to \cart/\tot\) the pullback functor along
  \(\pr\) and by \(\pr_{!} : \cart/\tot \to \cart/\bas\) its left
  adjoint, that is, the postcomposition with \(\pr\). For an object
  \(\obj \in \cart\), we denote by \(\obj^{*} : \cart \to \cart/\obj\)
  the pullback along the arrow \(\obj \to \term\) to the terminal
  object and by \(\obj_{!} : \cart/\obj \to \cart\) its left adjoint.
\end{notation}

Any object in a slice category \(\cart/\bas\) is written as a pullback
of objects of the form \(\bas^{*}\obj \cong \obj \times \bas\) with
\(\obj \in \cart\) as follows.

\begin{lemma}
  \label{lem:object-in-slice}
  Let \(\cart\) be a cartesian category, \(\bas \in \cart\) an
  object. For any object \((\arr : \obj \to \bas) \in \cart/\bas\), we
  have the following pullback in \(\cart/\bas\)
  \[
    \begin{tikzcd}
      \obj
      \arrow[r, "{(\id_{\obj}, \arr)}"]
      \arrow[d, "\arr"'] &
      \obj \times \bas
      \arrow[d, "\arr \times \bas"] \\
      \bas
      \arrow[r, "\diagonal_{\bas}"'] &
      \bas \times \bas,
    \end{tikzcd}
  \]
  where \(\diagonal_{\bas}\) is the diagonal arrow. \qed
\end{lemma}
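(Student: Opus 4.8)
The plan is to reduce the claim to an ordinary pullback computation in \(\cart\). Recall the standard fact that the domain functor \(\cart/\bas \to \cart\) creates connected limits, and that pullbacks are connected; consequently a commuting square in \(\cart/\bas\) is a pullback there exactly when its underlying square in \(\cart\) is a pullback. So I would first clear away the slice bookkeeping: one checks that the four displayed maps really are morphisms over \(\bas\), where \(\bas\) is the terminal object of \(\cart/\bas\) and both \(\obj \times \bas \cong \bas^{*}\obj\) and \(\bas \times \bas \cong \bas^{*}\bas\) carry their projections to \(\bas\) as structure maps, and that the square commutes---which amounts to the identity \((\arr \times \bas) \circ (\id_{\obj}, \arr) = (\arr, \arr) = \diagonal_{\bas} \circ \arr\). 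Having done this, it remains only to prove the underlying square is a pullback in \(\cart\).

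That square is precisely the assertion that the graph \((\id_{\obj}, \arr)\) of \(\arr\) is obtained by pulling back the diagonal \(\diagonal_{\bas}\) along \(\arr \times \bas = \arr \times \id_{\bas}\), a standard property of graphs of morphisms. I would verify it directly through the universal property. Given an object \(Z\) together with maps \(f : Z \to \obj \times \bas\) and \(g : Z \to \bas\) satisfying \((\arr \times \bas) \circ f = \diagonal_{\bas} \circ g\), write \(f = (f_{1}, f_{2})\) with \(f_{1} : Z \to \obj\) and \(f_{2} : Z \to \bas\) its two components. The cone condition reads \((\arr \circ f_{1}, f_{2}) = (g, g)\), hence \(\arr \circ f_{1} = g\) and \(f_{2} = g\). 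Then \(f_{1}\) is the desired mediating map, since \((\id_{\obj}, \arr) \circ f_{1} = (f_{1}, \arr \circ f_{1}) = (f_{1}, g) = (f_{1}, f_{2}) = f\) and \(\arr \circ f_{1} = g\); and it is unique because any mediating map \(m\) satisfies \(m = \pi_{1} \circ (\id_{\obj}, \arr) \circ m = \pi_{1} \circ f = f_{1}\), where \(\pi_{1}\) is the first projection.

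I do not expect a real obstacle here: the statement is elementary, and the only thing to be careful about is the slice-category bookkeeping---identifying the correct structure maps (the second projections, so that \(\arr \times \bas\) and \(\diagonal_{\bas}\) are arrows over \(\bas\)) and invoking that the forgetful functor reflects pullbacks, so that the computation in \(\cart\) suffices. Everything else is a routine components-of-a-pair calculation.
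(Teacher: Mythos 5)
Your proof is correct. The paper states this lemma without any proof at all (it is asserted with a \(\qed\) as immediate), and your argument---clearing the slice bookkeeping via the fact that the domain functor \(\cart/\bas \to \cart\) creates connected limits, then verifying the standard graph-of-a-morphism pullback square \((\id_{\obj},\arr) = (\arr \times \bas)^{-1}(\diagonal_{\bas})\) by a componentwise universal-property computation---supplies exactly the routine verification the paper leaves to the reader.
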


\begin{definition}
  Let \(\pr : \tot \to \bas\) be an arrow in a cartesian category
  \(\cart\). We say \(\pr\) is \emph{exponentiable} if the pullback
  functor \(\pr^{*} : \cart/\bas \to \cart/\tot\) has a right
  adjoint. When \(\pr\) is exponentiable, the right adjoint of
  \(\pr^{*}\) is called the \emph{pushforward along \(\pr\)} and denoted
  by \(\pr_{*}\).
\end{definition}

\begin{construction}
  Let \(\pr : \tot \to \bas\) be an exponentiable arrow in a cartesian
  category \(\cart\). We define the \emph{polynomial functor
    \(\poly_{\pr} : \cart \to \cart\) associated to \(\pr\)} to be the
  composite
  \[
    \begin{tikzcd}
      \cart
      \arrow[r,"\tot^{*}"] &
      \cart/\tot
      \arrow[r,"\pr_{*}"] &
      \cart/\bas
      \arrow[r,"\bas_{!}"] &
      \cart.
    \end{tikzcd}
  \]
\end{construction}

We characterize the associated polynomial functor \(\poly_{\pr}\) as
an endofunctor on \(\cart\) equipped with certain natural
transformations and show that the existence of a polynomial functor
for \(\pr\) is equivalent to the exponentiability of \(\pr\). First,
\(\poly_{\pr}\obj\) for an object \(\obj \in \cart\) is equipped with
an arrow \(\leg_{\obj} : \poly_{\pr}\obj \to \bas\), which defines a
natural transformation \(\leg : \poly_{\pr} \To \bas\). Then
\(\poly_{\pr} : \cart \to \cart\) and \(\leg : \poly_{\pr} \To \bas\)
determines a functor \(\cart \to \cart/\bas\) which is nothing but
\(\pr_{*}\tot^{*}\). Since \(\pr_{*}\tot^{*}\) is the right adjoint of
\(\tot_{!}\pr^{*} \cong (\argu \times_{\bas} \tot)\), we have the unit
\(\unit_{\objI} : \objI \to \poly_{\pr}(\objI \times_{\bas} \tot)\)
for \(\objI \in \cart/\bas\) and the counit
\(\subst_{\obj} : \poly_{\pr}\obj \times_{\bas} \tot \to \obj\) for
\(\obj \in \cart\) satisfying the following triangle identities.
\begin{align}
  \label{eq:15}
  \poly_{\pr}\subst_{\obj} \circ \unit_{\poly_{\pr}\obj}
  &= \id_{\poly_{\pr}\obj}
  & (\obj \in \cart) \\
  \label{eq:16}
  \subst_{\objI \times_{\bas} \tot} \circ (\unit_{\objI} \times_{\bas}
  \tot)
  &= \id_{\objI \times_{\bas} \tot}
  & (\objI \in \cart/\bas)
\end{align}
This characterizes the polynomial functor as the \emph{partial product
  over \(\pr\)} \parencite{dyckhoff1987exponentiable}, and it is known
that the existence of such an endofunctor
\(\poly_{\pr} : \cart \to \cart\) with natural transformations
\(\leg\), \(\unit\) and \(\subst\) is equivalent to the
exponentiability of \(\pr\).
\begin{proposition}[\textcite{niefield1982cartesianness}]
  \label{prop:exp-poly}
  For an arrow \(\pr : \tot \to \bas\) in a cartesian category
  \(\cart\), the following are equivalent:
  \begin{enumerate}
  \item \(\pr\) is exponentiable;
  \item \((\argu \times_{\bas} \tot) : \cart/\bas \to \cart\) has a
    right adjoint.
  \end{enumerate}
\end{proposition}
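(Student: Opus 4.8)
The implication (1)$\Rightarrow$(2) is immediate and was essentially observed already: the forgetful functor \(\tot_{!} : \cart/\tot \to \cart\) always has the right adjoint \(\tot^{*}\), so if \(\pr^{*}\) has a right adjoint \(\pr_{*}\), then the composite \((\argu \times_{\bas} \tot) \cong \tot_{!}\pr^{*}\) has the right adjoint \(\pr_{*}\tot^{*}\). The content is the converse, so I assume \((\argu \times_{\bas} \tot) \cong \tot_{!}\pr^{*}\) has a right adjoint, which I write \(R : \cart \to \cart/\bas\). To prove that \(\pr^{*}\) has a right adjoint it suffices, by the pointwise construction of adjoints, to show that for every object \((g : Y \to \tot) \in \cart/\tot\) the presheaf
\[
  F_{Y} := (\cart/\tot)(\pr^{*}\argu, Y) \colon (\cart/\bas)^{\mathrm{op}} \to \Set
\]
is representable; the representing objects then assemble into the desired right adjoint \(\pr_{*}\).

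The plan is to verify representability first for objects of the form \(\tot^{*}\obj\) with \(\obj \in \cart\), and then to reduce the general case to this one. For \(\objI \in \cart/\bas\) the adjunctions \(\tot_{!} \dashv \tot^{*}\) and \(\tot_{!}\pr^{*} \dashv R\) give natural isomorphisms
\[
  (\cart/\tot)(\pr^{*}\objI, \tot^{*}\obj)
  \cong \cart(\tot_{!}\pr^{*}\objI, \obj)
  \cong (\cart/\bas)(\objI, R\obj),
\]
so \(F_{\tot^{*}\obj}\) is represented by \(R\obj\), naturally in \(\obj\). In particular \(F_{\term} = F_{\tot^{*}\term}\) is the terminal presheaf and is represented by \(R\term\), which is therefore the terminal object of \(\cart/\bas\).

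For a general object \((g : Y \to \tot)\), \cref{lem:object-in-slice} applied with base object \(\tot\) exhibits \(Y\) as a pullback in \(\cart/\tot\) of the cospan \(\term \xrightarrow{\diagonal_{\tot}} \tot^{*}\tot \xleftarrow{\tot^{*}g} \tot^{*}Y\), whose three vertices \(\term = \tot^{*}\term\), \(\tot^{*}\tot\) and \(\tot^{*}Y\) are all of the form \(\tot^{*}\obj\). Since the hom-functor \((\cart/\tot)(\pr^{*}\objI, \argu)\) preserves this pullback, \(F_{Y}\) is the pullback of the representable presheaves \(F_{\term}\), \(F_{\tot^{*}\tot}\) and \(F_{\tot^{*}Y}\); and representable presheaves are closed under finite limits, because the Yoneda embedding of the finitely complete category \(\cart/\bas\) preserves them. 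Hence \(F_{Y}\) is representable, with representing object the pullback \(\pr_{*}Y := \term \times_{R\tot} RY\) in \(\cart/\bas\) — exactly the partial product of \(g\) over \(\pr\) — proving (2)$\Rightarrow$(1). The step requiring the most care is the identification of the two maps of this cospan of presheaves via the Yoneda lemma: the right-hand map is \(R g : RY \to R\tot\) by naturality of the isomorphism above, but the left-hand map is classified by \(\diagonal_{\tot}\), which is \emph{not} in the image of \(\tot^{*}\), so it must be read off directly from Yoneda; checking that the isomorphism \(F_{\tot^{*}\obj} \cong (\cart/\bas)(\argu, R\obj)\) is natural in \(\obj\) as well as in \(\objI\), and hence that the representing pullbacks \(\pr_{*}Y\) are functorial in \(Y\), is where the bulk of the bookkeeping lies.
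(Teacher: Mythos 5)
Your proof is correct, but there is nothing in the paper to compare it against: the paper does not prove this proposition at all, it simply quotes it from \textcite{niefield1982cartesianness} and uses it as a black box (its own contribution, \cref{lem:polynomial-3}, is the translation between right adjoints of \((\argu \times_{\bas} \tot)\) and polynomial functors, which sits on top of this proposition). Your direction (1)\(\Rightarrow\)(2), composing the adjunctions \(\tot_{!} \dashv \tot^{*}\) and \(\pr^{*} \dashv \pr_{*}\), is the easy half and is implicit in the paper's discussion preceding the statement. Your (2)\(\Rightarrow\)(1) --- pointwise representability of \((\cart/\tot)(\pr^{*}\argu, Y)\), reduction of an arbitrary \((g : Y \to \tot) \in \cart/\tot\) to objects in the image of \(\tot^{*}\) via \cref{lem:object-in-slice} applied over \(\tot\), and closure of representable presheaves under finite limits --- is essentially the partial-product argument of Niefield and Dyckhoff--Tholen that the paper alludes to just before the proposition; indeed your representing object \(\term \times_{R\tot} RY\) is exactly the partial product of \(Y\) over \(\pr\), and your use of \cref{lem:object-in-slice} mirrors the paper's own use of that lemma in the proof of \cref{prop:preserve-poly}, where it likewise serves to reduce a statement about all of \(\cart/\tot\) to the image of \(\tot^{*}\). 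Two small remarks: your closing worry about the ``bulk of the bookkeeping'' is unfounded, since by the standard pointwise construction of adjoints (parametrized representability) the representing objects assemble into a right adjoint, uniquely and with naturality in both variables, as soon as each presheaf \((\cart/\tot)(\pr^{*}\argu, Y)\) is representable --- no separate functoriality check is needed; and the left leg \(\term \to R\tot\) of your cospan, while indeed not of the form \(Rf\), can be identified concretely as the transpose of the canonical isomorphism \(\term \times_{\bas} \tot \cong \tot\) under \((\argu \times_{\bas} \tot) \dashv R\), though for the representability argument its mere existence via Yoneda suffices, as you say.
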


We further decompose the unit \(\unit_{\objI}\) into pieces. As
special components of \(\unit\), we have the following arrows over
\(\bas\).
\begin{align*}
  \unit_{\obj \times \bas}
  &: \obj \times \bas \to \poly_{\pr}((\obj \times \bas) \times_{\bas}
    \tot) \cong \poly_{\pr}(\obj \times \tot)
  & (\obj \in \cart) \\
  \unit_{\bas}
  &: \bas \to \poly_{\pr}(\bas \times_{\bas} \tot) \cong
    \poly_{\pr}\tot
\end{align*}
We define a natural transformation
\[
  \wk_{\obj} : \obj \times \bas \to \poly_{\pr}\obj
\]
for \(\obj \in \cart\) by the composite
\[
  \begin{tikzcd}
    \obj \times \bas
    \arrow[r, "\unit_{\obj \times \bas}"] &
    \poly_{\pr}(\obj \times \tot)
    \arrow[r, "\poly_{\pr}\prodpr_{1}"] &
    \poly_{\pr}\obj,
  \end{tikzcd}
\]
where \(\prodpr_{1} : \obj \times \tot \to \obj\) is the first
projection, and let
\[
  \proj := \unit_{\bas} : \bas \to \poly_{\pr}\tot.
\]
The unit \(\unit\) can be recovered from \(\wk\) and \(\proj\) as
follows. First, a component of the form
\(\unit_{\obj \times \bas} : \obj \times \bas \to \poly_{\pr}(\obj
\times \tot)\) for \(\obj \in \cart\) is determined by the equations
\begin{align*}
  \poly_{\pr}\prodpr_{1} \circ \unit_{\obj \times \bas}
  &= \wk_{\obj} \\
  \poly_{\pr}\prodpr_{2} \circ \unit_{\obj \times \bas}
  &= \proj \circ \prodpr_{2}
\end{align*}
because \(\poly_{\pr}\) sends binary products in \(\cart\) to
pullbacks over \(\bas\). For a general component
\(\unit_{\objI} : \objI \to \poly_{\pr}(\objI \times_{\bas} \tot)\)
for \((\arr : \objI \to \bas) \in \cart/\bas\), observe that
\cref{lem:object-in-slice} implies that the component
\(\unit_{\objI}\) is determined by the components
\(\unit_{\objI \times \bas}\), \(\unit_{\bas \times \bas}\) and
\(\unit_{\bas} = \proj\), since \(\poly_{\pr}\) preserves
pullbacks. From the description of \(\unit_{\obj \times \bas}\), the
component
\(\unit_{\objI} : \objI \to \poly_{\pr}(\objI \times_{\bas} \tot)\) is
determined also by the equations
\begin{align}
  \label{eq:17}
  \poly_{\pr}\prodpr_{1} \circ \unit_{\objI}
  &= \wk_{\objI} \circ (\id_{\objI}, \arr) \\
  \label{eq:18}
  \poly_{\pr}\prodpr_{2} \circ \unit_{\objI}
  &= \proj \circ \arr.
\end{align}

This motivates us to characterize the polynomial functor in terms of
\(\leg\), \(\wk\), \(\proj\) and \(\subst\). The natural
transformation \(\wk\) and the arrow \(\proj\) should satisfy some
equations. One axiomatization is as follows.

\begin{definition}
  \def\Pref{P\arabic*}

  \label{def:family-structure}
  Let \(\pr : \tot \to \bas\) be an arrow in a cartesian category
  \(\cart\). A \emph{polynomial functor for \(\pr\)} is an endofunctor
  \(\poly : \cart \to \cart\) preserving pullbacks equipped with the
  following structure:
  \begin{itemize}
  \item a natural transformation \(\leg_{\obj} : \poly\obj \to \bas\);
  \item a natural transformation
    \(\wk_{\obj} : \obj \times \bas \to \poly\obj\) over \(\bas\);
  \item an arrow \(\proj : \bas \to \poly\tot\) over \(\bas\);
  \item a natural transformation
    \(\subst_{\obj} : \poly\obj \times_{\bas} \tot \to \obj\)
  \end{itemize}
  satisfying the following axioms:
  \begin{enumerate}[label=(\Pref), ref=\Pref]
  \item \label[axiom]{axiom:1}
    \(\poly\pr \circ \proj = \wk_{\bas} \circ \diagonal_{\bas}\)
    \[
      \begin{tikzcd}
        \bas
        \arrow[r,"\proj"]
        \arrow[d,"\diagonal_{\bas}"'] &
        \poly\tot
        \arrow[d,"\poly\pr"] \\
        \bas \times \bas
        \arrow[r,"\wk_{\bas}"'] &
        \poly\bas;
      \end{tikzcd}
    \]
  \item \label[axiom]{axiom:2}
    \(\subst_{\tot} \circ (\proj \times_{\bas} \tot)\) is the
    isomorphism \(\bas \times_{\bas} \tot \cong \tot\)
    \[
      \begin{tikzcd}
        \bas \times_{\bas} \tot
        \arrow[r,"\proj \times_{\bas} \tot"]
        \arrow[dr,"\cong"'] &
        \poly\tot \times_{\bas} \tot
        \arrow[d,"\subst_{\tot}"] \\
        & \tot;
      \end{tikzcd}
    \]
  \item \label[axiom]{axiom:3}
    \(\subst_{\obj} \circ (\wk_{\obj} \times_{\bas} \tot)\) is the
    projection
    \((\obj \times \bas) \times_{\bas} \tot \cong \obj \times \tot \to
    \obj\)
    \[
      \begin{tikzcd}
        (\obj \times \bas) \times_{\bas} \tot
        \arrow[r,"\wk_{\obj} \times_{\bas} \tot"]
        \arrow[dr] &
        [4ex]
        \poly\obj \times_{\bas} \tot
        \arrow[d,"\subst_{\obj}"] \\
        & \obj;
      \end{tikzcd}
    \]
  \item \label[axiom]{axiom:4}
    \(\poly\subst_{\obj} \circ (\wk_{\poly\obj}(\id_{\poly\obj},
    \leg_{\obj}), \proj\leg_{\obj}) = \id_{\poly\obj}\), where
    \((\wk_{\poly\obj}(\id_{\poly\obj}, \leg_{\obj}),
    \proj\leg_{\obj})\) is defined using the isomorphism
    \(\poly\poly\obj \times_{\poly\bas} \poly\tot \cong
    \poly(\poly\obj \times_{\bas} \tot)\) as \(\poly\) preserves
    pullbacks
    \[
      \begin{tikzcd}
        \poly\obj
        \arrow[r,"{(\wk_{\poly\obj}(\id_{\poly\obj}, \leg_{\obj}),
          \proj\leg_{\obj})}"]
        \arrow[dr,equal] &
        [12ex]
        \poly(\poly\obj \times_{\bas} \tot)
        \arrow[d,"\poly\subst_{\obj}"] \\
        & \poly\obj.
      \end{tikzcd}
    \]
  \end{enumerate}
\end{definition}

\begin{remark}
  The intuition behind \cref{def:family-structure} will be explained
  in \cref{sec:expon-arrows-assoc}. Roughly, the data \(\wk\),
  \(\proj\) and \(\subst\) correspond to the structural rules of
  weakening, projection and substitution, respectively, of dependent
  type theory, and the axioms express the interaction of these rules.
\end{remark}

\begin{lemma}
  \label{lem:polynomial-1}
  For any exponentiable arrow \(\pr : \tot \to \bas\) in a cartesian
  category \(\cart\), the associated polynomial functor
  \(\poly_{\pr} : \cart \to \cart\) is a polynomial functor for
  \(\pr\) in the sense of \cref{def:family-structure} with \(\leg\),
  \(\wk\), \(\proj\) and \(\subst\) constructed as above.
\end{lemma}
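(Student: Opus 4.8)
The plan is to reduce every axiom to data already in hand, namely the adjunction $(\argu \times_{\bas} \tot) \dashv \pr_{*}\tot^{*}$ of \cref{prop:exp-poly} whose unit and counit are $\unit$ and $\subst$, together with the defining equations \eqref{eq:17}, \eqref{eq:18} of $\unit$ in terms of $\wk$ and $\proj$ and the two triangle identities \eqref{eq:15}, \eqref{eq:16}. First I would dispatch the structural claims. The functor $\poly_{\pr} = \bas_{!}\pr_{*}\tot^{*}$ preserves pullbacks because it is a composite of three pullback-preserving functors: the right adjoints $\tot^{*}$ and $\pr_{*}$ preserve all limits, and $\bas_{!}$, being the domain projection $\cart/\bas \to \cart$, preserves connected limits. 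The transformation $\leg$ is given by the structure maps of $\pr_{*}\tot^{*}\obj \in \cart/\bas$, and the ``over $\bas$'' conditions on $\wk$ and $\proj$ follow from the fact that every component of $\unit$ is a morphism of $\cart/\bas$, together with naturality of $\leg$ and the definitions of $\wk$ and $\proj$.

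For \cref{axiom:1} I would specialize \eqref{eq:17} to the terminal object $\id_{\bas} \in \cart/\bas$. There $\objI \times_{\bas} \tot \cong \tot$, the first projection $\prodpr_{1}$ becomes $\pr$ and $(\id_{\objI}, \arr)$ becomes $\diagonal_{\bas}$, so \eqref{eq:17} reads $\poly_{\pr}\pr \circ \unit_{\bas} = \wk_{\bas} \circ \diagonal_{\bas}$; since $\proj = \unit_{\bas}$, this is \cref{axiom:1}. For \cref{axiom:2} I would specialize the triangle identity \eqref{eq:16} to the same object $\objI = \id_{\bas}$: it becomes $\subst_{\tot} \circ (\proj \times_{\bas} \tot) = \id$, which under $\bas \times_{\bas} \tot \cong \tot$ is exactly the assertion that the composite is the canonical isomorphism.

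\Cref{axiom:3} I would obtain from \eqref{eq:16} at $\objI = \obj \times \bas$ together with naturality of the counit $\subst$. Expanding $\wk_{\obj} = \poly_{\pr}\prodpr_{1} \circ \unit_{\obj \times \bas}$ and using naturality of $\subst$ along $\prodpr_{1} : \obj \times \tot \to \obj$ gives
\[
  \subst_{\obj} \circ (\wk_{\obj} \times_{\bas} \tot)
  = \prodpr_{1} \circ \subst_{\obj \times \tot} \circ (\unit_{\obj \times \bas} \times_{\bas} \tot)
  = \prodpr_{1},
\]
the last step by \eqref{eq:16} and $(\obj \times \bas) \times_{\bas} \tot \cong \obj \times \tot$, which is \cref{axiom:3}.

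The crux is \cref{axiom:4}, and here the work is to recognize the composite $(\wk_{\poly_{\pr}\obj}(\id_{\poly_{\pr}\obj}, \leg_{\obj}), \proj\leg_{\obj})$ as the unit component $\unit_{\poly_{\pr}\obj}$, where $\poly_{\pr}\obj$ is regarded as an object of $\cart/\bas$ via $\leg_{\obj}$. By construction this composite is the map into $\poly_{\pr}(\poly_{\pr}\obj \times_{\bas} \tot) \cong \poly_{\pr}\poly_{\pr}\obj \times_{\poly_{\pr}\bas} \poly_{\pr}\tot$ with components $\wk_{\poly_{\pr}\obj} \circ (\id_{\poly_{\pr}\obj}, \leg_{\obj})$ and $\proj \circ \leg_{\obj}$; these are precisely the right-hand sides of the defining equations \eqref{eq:17} and \eqref{eq:18} for $\unit_{\poly_{\pr}\obj}$ at $\objI = \poly_{\pr}\obj$, $\arr = \leg_{\obj}$, so the two maps agree. \Cref{axiom:4} then reads $\poly_{\pr}\subst_{\obj} \circ \unit_{\poly_{\pr}\obj} = \id_{\poly_{\pr}\obj}$, which is the remaining triangle identity \eqref{eq:15}. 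The main obstacle is exactly this bookkeeping: one must verify that the pairing defined through the pullback-preservation isomorphism of $\poly_{\pr}$ coincides with the component-wise characterization of $\unit$, which is where preservation of pullbacks by $\poly_{\pr}$ is indispensable.
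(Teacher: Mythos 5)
Your proposal is correct and follows essentially the same route as the paper: \cref{axiom:2,axiom:3} come from the special cases of the triangle identity \labelcref{eq:16} at \(\objI = \bas\) and \(\objI = \obj \times \bas\), and \cref{axiom:4} is identified with the triangle identity \labelcref{eq:15} by recognizing \((\wk_{\poly_{\pr}\obj}(\id_{\poly_{\pr}\obj}, \leg_{\obj}), \proj\leg_{\obj})\) as \(\unit_{\poly_{\pr}\obj}\) via \cref{eq:17,eq:18}. The only cosmetic difference is \cref{axiom:1}, which the paper deduces directly from the naturality of \(\unit\) while you specialize \cref{eq:17} at \(\objI = \bas\), \(\arr = \id_{\bas}\) — but since \cref{eq:17} is itself a consequence of that naturality, the content is the same, and your extra explicitness (the pullback-preservation of \(\poly_{\pr}\) and the naturality of \(\subst\) needed in \cref{axiom:3}) only fills in steps the paper leaves implicit.
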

\begin{proof}
  \Cref{axiom:1} follows from the naturality of
  \(\unit\). \Cref{axiom:2,axiom:3} follow from the special cases of
  the triangle identity \labelcref{eq:16} when \(\objI = \bas\) and
  when \(\objI = \obj \times \bas\), respectively. By
  \cref{eq:17,eq:18}, \cref{axiom:4} is the same as the triangle
  identity \labelcref{eq:15}.
\end{proof}

\begin{lemma}
  \label{lem:polynomial-3}
  Let \(\cart\) be a cartesian category, \(\pr : \tot \to \bas\) an
  arrow in \(\cart\), \(\poly : \cart \to \cart\) an endofunctor
  preserving pullbacks and \(\leg_{\obj} : \poly\obj \To \bas\) and
  \(\subst_{\obj} : \poly\obj \times_{\bas} \tot \to \obj\) natural
  transformations for \(\obj \in \cart\). Then we have a bijective
  correspondence between the following sets:
  \begin{itemize}
  \item the set of natural transformations
    \(\unit_{\objI} : \objI \to \poly(\objI \times_{\bas} \tot)\) for
    \(\objI \in \cart/\bas\) making
    \((\poly, \leg) : \cart \to \cart/\bas\) a right adjoint of
    \((\argu \times_{\bas} \tot)\) with unit \(\unit\) and counit
    \(\subst\);
  \item the set of pairs \((\wk, \proj)\) consisting of a natural
    transformation \(\wk_{\obj} : \obj \times \bas \to \poly\obj\)
    over \(\bas\) for \(\obj \in \cart\) and an arrow
    \(\proj : \bas \to \poly\tot\) over \(\bas\) making
    \((\poly, \leg, \wk, \proj, \subst)\) a polynomial functor for
    \(\pr\).
  \end{itemize}
  Concretely, for a natural transformation \(\unit_{\objI} : \objI \to
  \poly(\objI \times_{\bas} \tot)\) for \(\objI \in \cart/\bas\), the
  corresponding pair \((\wk, \proj)\) is defined by
  \begin{align}
    \label{eq:25}
    \wk_{\obj}
    &= \left(
      \begin{tikzcd}[ampersand replacement=\&]
        \obj \times \bas
        \arrow[r, "\unit_{\obj \times \bas}"] \&
        \poly((\obj \times \bas) \times_{\bas} \tot)
        \cong \poly(\obj \times \tot)
        \arrow[r, "\poly\prodpr_{1}"] \&
        \poly\obj
      \end{tikzcd}
    \right)\\
    \label{eq:26}
    \proj
    &= \left(
      \begin{tikzcd}[ampersand replacement=\&]
        \bas
        \arrow[r, "\unit_{\bas}"] \&
        \poly(\bas \times_{\bas} \tot)
        \cong \poly\tot
      \end{tikzcd}
      \right).
  \end{align}
\end{lemma}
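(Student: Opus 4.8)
The plan is to produce the inverse of the assignment $\Phi \colon \unit \mapsto (\wk, \proj)$ described by \labelcref{eq:25,eq:26} and to check both that the two passages are mutually inverse and that they land in the asserted sets. The inverse $\Psi$ sends a pair $(\wk, \proj)$ satisfying \cref{axiom:1,axiom:2,axiom:3,axiom:4} to the family whose component at $(\arr \colon \objI \to \bas) \in \cart/\bas$ is the arrow $\unit_\objI \colon \objI \to \poly(\objI \times_\bas \tot)$ determined by \labelcref{eq:17,eq:18}. This is well defined because $\poly$ preserves pullbacks, so $\poly(\objI \times_\bas \tot) \cong \poly\objI \times_{\poly\bas} \poly\tot$, and I would check that the two prescribed legs $\wk_\objI \circ (\id_\objI, \arr)$ and $\proj \circ \arr$ become equal after postcomposition with $\poly\arr$ and $\poly\pr$: using naturality of $\wk$ this reduces exactly to \cref{axiom:1}, so the legs factor through the pullback and pin down $\unit_\objI$ uniquely. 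A short computation with the naturality of $\leg$ and the fact that $\wk$ lies over $\bas$ then gives $\leg \circ \unit_\objI = \arr$, so $\unit_\objI$ is an arrow over $\bas$, and naturality of $\unit$ in $\objI$ follows from naturality of $\wk$ and the universal property of these pullbacks.

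That $\Phi$ and $\Psi$ are mutually inverse is largely bookkeeping. Substituting the special objects $\objI = \bas$ and $\objI = \obj \times \bas$ into the defining equations \labelcref{eq:17,eq:18} of $\Psi(\wk, \proj)$ and comparing with \labelcref{eq:25,eq:26}, using naturality of $\wk$ to reconcile the pullback and product projections, recovers $\proj$ and $\wk$ and hence gives $\Phi \circ \Psi = \id$; conversely, the discussion preceding the lemma shows that any adjunction unit $\unit$ already satisfies \labelcref{eq:17,eq:18} relative to its associated $(\wk, \proj)$, and since these equations determine a component uniquely this yields $\Psi \circ \Phi = \id$. That $\Phi$ lands in the set of polynomial-functor structures is the formal content of \cref{lem:polynomial-1}, whose proof uses nothing beyond the adjunction data $\unit, \subst$ and so applies unchanged to the present abstract $\poly$.

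It remains to check that $\Psi(\wk, \proj)$, together with the given $\subst$, satisfies the triangle identities \labelcref{eq:15,eq:16}. Identity \labelcref{eq:15} becomes \cref{axiom:4} verbatim once \labelcref{eq:17,eq:18} are substituted into its left-hand side. Identity \labelcref{eq:16} I would first verify on special objects: at $\objI = \bas$ it is \cref{axiom:2}, and at $\objI = \obj \times \bas$ it follows from \cref{axiom:2,axiom:3} by testing the purported identity against the two projections of $\objI \times_\bas \tot \cong \obj \times \tot$ onto $\obj$ and $\tot$ and invoking naturality of $\subst$. The main obstacle is to propagate \labelcref{eq:16} to an arbitrary $(\arr \colon \objI \to \bas)$. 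For this I would present $\objI$ as the pullback of $\obj \times \bas$ and $\bas$ over $\bas \times \bas$ furnished by \cref{lem:object-in-slice}, note that $(\argu \times_\bas \tot) = \tot_! \pr^*$ preserves this pullback (it is a pullback functor followed by a slice projection, both of which preserve pullbacks), and thereby exhibit the common codomain $\objI \times_\bas \tot$ as a pullback. Since both sides of \labelcref{eq:16} are natural endomorphisms of $(\argu \times_\bas \tot)$, naturality reduces their equality to equality after composing with the two pullback projections, which are the images of the projections out of $\objI$; this returns us to the special cases already handled. Carrying the canonical isomorphisms $\poly(\objI \times_\bas \tot) \cong \poly\objI \times_{\poly\bas} \poly\tot$ correctly through these reductions is the only delicate point.
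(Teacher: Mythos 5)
Your proposal is correct, and its overall architecture coincides with the paper's proof: the inverse assignment is defined by the same equations (your \labelcref{eq:17,eq:18}, the paper's \labelcref{eq:21,eq:22}), the first triangle identity \labelcref{eq:15} is identified with \cref{axiom:4}, membership of \((\wk, \proj)\) in the set of polynomial-functor structures is delegated to \cref{lem:polynomial-1} (with your added, correct remark that its proof uses only the adjunction data), and the mutual-inverse check runs exactly as in the paper, one direction being the discussion preceding \cref{def:family-structure} and the other a naturality-of-\(\wk\) computation. The one step where you genuinely deviate is the second triangle identity \labelcref{eq:16}: the paper proves it for arbitrary \((\arr : \objI \to \bas)\) directly, by postcomposing with the two projections of \(\objI \times_{\bas} \tot\) and combining the naturality squares of \(\subst\) with \cref{axiom:2,axiom:3}, whereas you first treat the special cases \(\objI = \bas\) and \(\objI = \obj \times \bas\) and then propagate to general \(\objI\) via the pullback presentation of \cref{lem:object-in-slice}, the fact that \((\argu \times_{\bas} \tot) \cong \tot_{!}\pr^{*}\) preserves that pullback, and the naturality of both sides of \labelcref{eq:16} in \(\objI \in \cart/\bas\). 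Both reductions are sound and rest on the same two axioms; the paper's is shorter, while yours mirrors the paper's discussion preceding \cref{def:family-structure} and has the merit of making explicit what the paper's ``since \(\poly\) preserves pullbacks, we can define'' leaves implicit, namely that the legs \(\wk_{\objI}(\id_{\objI}, \arr)\) and \(\proj\arr\) agree after postcomposition to \(\poly\bas\) (by \cref{axiom:1} and naturality of \(\wk\)), so that \(\unit_{\objI}\) is well defined, lies over \(\bas\), and is natural. One small bookkeeping point: in your special case \(\objI = \obj \times \bas\), testing against the projection to \(\obj\) also requires the naturality of \(\wk\) (to pass from \(\wk_{\obj \times \bas}\) to \(\wk_{\obj}\)) in addition to \cref{axiom:3} and the naturality of \(\subst\); since you invoke exactly this reconciliation elsewhere, this is an omission of detail, not a gap.
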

\begin{proof}
  We have already seen in \cref{lem:polynomial-1} that
  \((\wk, \proj)\) defined as \cref{eq:25,eq:26} satisfies the axioms
  of a polynomial functor for \(\pr\).

  To give an inverse construction, let \((\wk, \proj)\) be a pair
  making \((\poly, \leg, \wk, \proj, \subst)\) a polynomial functor
  for \(\pr\). Since \(\poly\) preserves pullbacks, we can define a
  natural transformation
  \(\unit_{\objI} : \objI \to \poly(\objI \times_{\bas} \tot)\) for
  \((\arr : \objI \to \bas) \in \cart/\bas\) by the equations
  \begin{align}
    \label{eq:21}
    \poly\prodpr_{1} \circ \unit_{\objI}
    &= \wk_{\objI} \circ (\id_{\objI}, \arr) \\
    \label{eq:22}
    \poly\prodpr_{2} \circ \unit_{\objI}
    &= \proj \circ \arr.
  \end{align}
  We check that the triangle identities
  \begin{align}
    \label{eq:19}
    \poly\subst_{\obj} \circ \unit_{\poly\obj}
    &= \id_{\poly\obj}
    & (\obj \in \cart) \\
    \label{eq:20}
    \subst_{\objI \times_{\bas} \tot} \circ (\unit_{\objI} \times_{\bas}
    \tot)
    &= \id_{\objI \times_{\bas} \tot}
    & (\objI \in \cart/\bas)
  \end{align}
  are satisfied. \Cref{eq:19} is the same as \cref{axiom:4}. For
  \cref{eq:20}, it suffices to show that
  \begin{align}
    \label{eq:23}
    \prodpr_{1} \circ \subst_{\objI \times_{\bas} \tot} \circ
    (\unit_{\objI} \times_{\bas} \tot)
    &= \prodpr_{1} \\
    \label{eq:24}
    \prodpr_{2} \circ \subst_{\objI \times_{\bas} \tot} \circ
    (\unit_{\objI} \times_{\bas} \tot)
    &= \prodpr_{2}.
  \end{align}
  By the naturality of \(\subst\), the following squares commute.
  \[
    \begin{tikzcd}
      \poly\objI \times_{\bas} \tot
      \arrow[d, "\subst_{\objI}"'] &
      [6ex]
      \poly(\objI \times_{\bas} \tot) \times_{\bas} \tot
      \arrow[l, "\poly\prodpr_{1} \times_{\bas} \tot"']
      \arrow[d, "\subst_{\objI \times_{\bas} \tot}"]
      \arrow[r, "\poly\prodpr_{2} \times_{\bas} \tot"] &
      [6ex]
      \poly\tot \times_{\bas} \tot
      \arrow[d, "\subst_{\tot}"] \\
      \objI &
      \objI \times_{\bas} \tot
      \arrow[l, "\prodpr_{1}"]
      \arrow[r, "\prodpr_{2}"'] &
      \tot
    \end{tikzcd}
  \]
  Then \cref{eq:21,eq:22,axiom:2,axiom:3} imply \cref{eq:23,eq:24}.

  We show that the constructions \(\unit \mapsto (\wk, \proj)\) and
  \((\wk, \proj) \mapsto \unit\) are mutually inverses. We have
  already seen one of the identities before
  \cref{def:family-structure}: \(\unit\) is recovered from
  \((\wk, \proj)\) by \cref{eq:17,eq:18}. For the other identity, let
  \((\wk, \proj)\) be a pair making
  \((\poly, \leg, \wk, \proj, \subst)\) a polynomial functor for
  \(\pr\), and define \(\unit\) by \cref{eq:21,eq:22}. We have to show
  that \cref{eq:25,eq:26} are satisfied. \Cref{eq:26} is immediate
  from \cref{eq:22}. For \cref{eq:25}, observe that the following
  diagram commutes
  \begin{equation}
    \label{eq:27}
    \begin{tikzcd}
      \obj \times \bas
      \arrow[r, "\unit_{\obj \times \bas}"]
      \arrow[d, "{(\id_{\obj \times \bas}, \prodpr_{2})}"'] &
      \poly((\obj \times \bas) \times_{\bas} \tot)
      \arrow[r, "\cong"]
      \arrow[d, "\poly\prodpr_{1}"] &
      \poly(\obj \times \tot)
      \arrow[d, "\poly\prodpr_{1}"] \\
      \obj \times \bas \times \bas
      \arrow[r, "\wk_{\obj \times \bas}"'] &
      \poly(\obj \times \bas)
      \arrow[r, "\poly\prodpr_{1}"'] &
      \poly\obj,
    \end{tikzcd}
  \end{equation}
  where the commutativity of the left square is an instance of
  \cref{eq:21}. By the naturality of \(\wk\), the diagram
  \begin{equation}
    \label{eq:28}
    \begin{tikzcd}
      \obj \times \bas
      \arrow[r, "{(\id_{\obj \times \bas}, \prodpr_{2})}"]
      \arrow[dr, equal] &
      [4ex]
      \obj \times \bas \times \bas
      \arrow[r, "\wk_{\obj \times \bas}"]
      \arrow[d, "\prodpr_{1} \times \bas"] &
      \poly(\obj \times \bas)
      \arrow[d, "\poly\prodpr_{1}"] \\
      & \obj \times \bas
      \arrow[r, "\wk_{\obj}"'] &
      \poly\obj
    \end{tikzcd}
  \end{equation}
  commutes. Then \cref{eq:25} follows from the commutativity of
  \cref{eq:27,eq:28}.
\end{proof}

From \cref{lem:polynomial-3}, constructing a right adjoint of
\((\argu \times_{\bas} \tot)\) is equivalent to constructing a
polynomial functor for \(\pr\). Combined with \cref{prop:exp-poly}, we
have the following.

\begin{theorem}
  \label{thm:family-repr}
  For an arrow \(\pr : \tot \to \bas\) in a cartesian category
  \(\cart\), the following are equivalent:
  \begin{enumerate}
  \item \label{item:7} \(\pr\) is exponentiable;
  \item \label{item:9} there exists a polynomial functor for \(\pr\).
  \end{enumerate}
  Moreover, if this is the case, then the associated polynomial
  functor \(\poly_{\pr}\) is a polynomial functor for \(\pr\) in the
  sense of \cref{def:family-structure}, and any polynomial functor for
  \(\pr\) is isomorphic to \(\poly_{\pr}\). \qed
\end{theorem}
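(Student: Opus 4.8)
The plan is to assemble \cref{prop:exp-poly}, \cref{lem:polynomial-1} and \cref{lem:polynomial-3} into the two implications together with the uniqueness clause; almost all of the substantive work is already contained in those results, so what remains is bookkeeping that keeps track of which pieces of structure are primitive and which are derived.

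To prove \cref{item:9} from \cref{item:7}, I would simply invoke \cref{lem:polynomial-1}: if \(\pr\) is exponentiable then the associated polynomial functor \(\poly_{\pr}\) is defined and carries the structure \((\leg, \wk, \proj, \subst)\) making it a polynomial functor for \(\pr\), so such a functor exists. This simultaneously establishes the first half of the ``moreover'' clause.

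To prove \cref{item:7} from \cref{item:9}, suppose \((\poly, \leg, \wk, \proj, \subst)\) is a polynomial functor for \(\pr\). Then \(\poly\) preserves pullbacks and \(\leg, \subst\) are natural transformations, so the hypotheses of \cref{lem:polynomial-3} are satisfied. Feeding the pair \((\wk, \proj)\) into the correspondence of \cref{lem:polynomial-3} produces a natural transformation \(\unit\) exhibiting \((\poly, \leg) : \cart \to \cart/\bas\) as a right adjoint of \((\argu \times_{\bas} \tot)\). Hence \((\argu \times_{\bas} \tot)\) admits a right adjoint, and \cref{prop:exp-poly} then gives that \(\pr\) is exponentiable.

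The remaining claim — that any polynomial functor for \(\pr\) is isomorphic to \(\poly_{\pr}\) — is where I expect the only genuine subtlety, so I would treat it carefully. Given a polynomial functor \((\poly, \leg, \wk, \proj, \subst)\), the previous paragraph exhibits \((\poly, \leg)\) as a right adjoint of \((\argu \times_{\bas} \tot)\) with counit \(\subst\), while by its very construction \((\poly_{\pr}, \leg) = \pr_{*}\tot^{*}\) is likewise a right adjoint of \((\argu \times_{\bas} \tot)\), with its own unit and counit. By the essential uniqueness of right adjoints there is a unique natural isomorphism \(\phi : \poly \To \poly_{\pr}\) compatible with the units and the counits; being a natural isomorphism of functors valued in \(\cart/\bas\), each component \(\phi_{\obj}\) is an arrow over \(\bas\), so \(\phi\) is automatically compatible with the two copies of \(\leg\). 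Compatibility with \(\subst\) is immediate since \(\subst\) is the counit. For \(\wk\) and \(\proj\) the point is that these are not primitive but are recovered from the unit \(\unit\) by the explicit formulas \labelcref{eq:25} and \labelcref{eq:26} of \cref{lem:polynomial-3}, which are built only from \(\unit\), from \(\poly\) applied to canonical projections, and from the pullback-preservation isomorphisms; since \(\phi\) is compatible with the units and is natural, it intertwines these formulas and hence commutes with \(\wk\) and \(\proj\). Thus \(\phi\) is an isomorphism of polynomial functors for \(\pr\), and the proof is complete.
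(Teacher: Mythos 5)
Your proposal is correct and takes essentially the same route as the paper, which obtains the theorem immediately by combining \cref{prop:exp-poly}, \cref{lem:polynomial-1} and \cref{lem:polynomial-3} (the paper marks the statement \qed with only the preceding remark that, by \cref{lem:polynomial-3}, constructing a right adjoint of \((\argu \times_{\bas} \tot)\) is equivalent to constructing a polynomial functor for \(\pr\)). Your careful handling of the uniqueness clause---transporting \(\leg\), \(\wk\), \(\proj\), \(\subst\) along the canonical isomorphism of right adjoints via the formulas \labelcref{eq:25} and \labelcref{eq:26}---is precisely the argument the paper leaves implicit.
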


We also note that a cartesian functor preserves pushforwards precisely
when it commutes with the associated polynomial functors in the
following sense.

\begin{proposition}
  \label{prop:preserve-poly}
  Let \(\cart\) and \(\cartI\) be cartesian categories,
  \(\pr : \tot \to \bas\) an exponentiable arrow in \(\cart\) and
  \(\functor : \cart \to \cartI\) a cartesian functor that sends
  \(\pr\) to an exponentiable arrow. Then, the canonical natural
  transformation
  \((\functor/\bas)\pr_{*} \To (\functor\pr)_{*}(\functor/\tot)\) is
  an isomorphism if and only if the natural transformation
  \(\functor\poly_{\pr} \To \poly_{\functor\pr}\functor\) defined by
  the composite
  \[
    \begin{tikzcd}
      \cart
      \arrow[r,"\functor"]
      \arrow[d,"\tot^{*}"']
      \arrow[dr,phantom,"\cong"{description}] &
      \cartI
      \arrow[d,"(\functor\tot)^{*}"] \\
      \cart/\tot
      \arrow[r,"\functor/\tot"]
      \arrow[d,"\pr_{*}"']
      \arrow[dr,phantom,bend right,""{name=a0}]
      \arrow[dr,phantom,bend left,""'{name=a1}]
      \arrow[from=a0,to=a1,Rightarrow] &
      \cartI/\functor\tot
      \arrow[d,"(\functor\pr)_{*}"] \\
      \cart/\bas
      \arrow[r,"\functor/\bas"']
      \arrow[d,"\bas_{!}"'] &
      \cartI/\functor\bas
      \arrow[d,"(\functor\bas)_{!}"] \\
      \cart
      \arrow[r,"\functor"'] &
      \cartI
    \end{tikzcd}
  \]
  is an isomorphism.
\end{proposition}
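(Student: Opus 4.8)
The plan is to read off from the pasting diagram that the polynomial transformation is built from the slice transformation by whiskering with two invertible squares, and then to reduce invertibility of the latter to a family of test objects. Write \(\alpha\colon (\functor/\bas)\pr_{*}\To(\functor\pr)_{*}(\functor/\tot)\) for the middle canonical transformation and \(\beta\colon\functor\poly_{\pr}\To\poly_{\functor\pr}\functor\) for the transformation defined by the whole composite. The top square of the diagram is an isomorphism because \(\functor\) preserves finite products, giving \((\functor\tot)^{*}\functor\cong(\functor/\tot)\tot^{*}\); the bottom square commutes strictly, since both legs send \((\arr\colon\obj\to\bas)\) to \(\functor\obj\), i.e.\ \(\functor\bas_{!}=(\functor\bas)_{!}(\functor/\bas)\). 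Hence, up to these two isomorphisms, \(\beta\) is the whiskered transformation \((\functor\bas)_{!}\cdot\alpha\cdot\tot^{*}\), so \(\beta\) is an isomorphism if and only if \((\functor\bas)_{!}(\alpha_{\tot^{*}\obj})\) is an isomorphism for every \(\obj\in\cart\). Since the forgetful functor \((\functor\bas)_{!}\colon\cartI/\functor\bas\to\cartI\) reflects isomorphisms, this is in turn equivalent to \(\alpha_{\tot^{*}\obj}\) being an isomorphism for every \(\obj\in\cart\).

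It therefore remains to show that \(\alpha_{\tot^{*}\obj}\) is invertible for all \(\obj\in\cart\) if and only if \(\alpha_{\objI}\) is invertible for all \(\objI\in\cart/\tot\); the ``if'' direction is immediate, and the proposition's statement is exactly that \(\alpha\) itself is invertible iff \(\beta\) is. For the converse I would use that both functors \((\functor/\bas)\pr_{*}\) and \((\functor\pr)_{*}(\functor/\tot)\) preserve pullbacks: the pushforwards \(\pr_{*}\) and \((\functor\pr)_{*}\) do so as right adjoints, and \(\functor/\tot\), \(\functor/\bas\) do so because \(\functor\) does. Consequently the class of objects of \(\cart/\tot\) on which \(\alpha\) is an isomorphism is closed under any pullback that exists there.

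The key step is to exhibit an arbitrary \(\objI\in\cart/\tot\) as a pullback of objects in the image of \(\tot^{*}\). Applying \cref{lem:object-in-slice} with base \(\tot\) presents each \((\arr\colon\objI\to\tot)\) as the pullback in \(\cart/\tot\) of the cospan \(\tot^{*}\objI\xrightarrow{\arr\times\tot}\tot^{*}\tot\xleftarrow{\diagonal_{\tot}}\tot^{*}\term\), whose three vertices \(\tot^{*}\objI\), \(\tot^{*}\tot\) and the terminal object \(\tot^{*}\term\cong(\tot,\id_{\tot})\) all lie in the image of \(\tot^{*}\). Since \(\alpha\) is an isomorphism on each vertex and both functors preserve this pullback, the comparison map \(\alpha_{\objI}\) between the resulting pullback cones is an isomorphism, as the induced map between two pullbacks of pairwise-isomorphic cospans. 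This settles the converse and hence the proposition. I expect the only point requiring care to be this last reduction: one must confirm that \cref{lem:object-in-slice} applies verbatim with \(\bas\) replaced by \(\tot\), and that the two pushforward composites genuinely preserve the pullback at hand, after which the conclusion is the routine fact that a natural transformation of pullback-preserving functors that is invertible on a pullback-generating family is invertible everywhere.
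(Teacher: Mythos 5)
Your proposal is correct and takes essentially the same approach as the paper: the paper's own proof is just a compressed version of yours, observing that \((\functor\bas)_{!}\) reflects isomorphisms and that, by \cref{lem:object-in-slice}, precomposition with \(\tot^{*}\) reflects isomorphisms between cartesian functors \(\cart/\tot \to \cartI/\functor\bas\) --- which is precisely the pullback-generation argument you spell out (presenting each object of \(\cart/\tot\) as a pullback of objects in the image of \(\tot^{*}\) and using that both composites preserve pullbacks). Your unfolding of the top (invertible) and bottom (strictly commuting) squares and of the two reflection steps fills in exactly what the paper leaves implicit, so there is no gap.
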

\begin{proof}
  Clearly \((\functor\bas)_{!} : \cartI/\functor\bas \to \cartI\)
  reflects isomorphisms. \Cref{lem:object-in-slice} implies that the
  precomposition
  \[
    (\argu\tot^{*}) : \Cart(\cart/\tot, \cartI/\functor\bas) \to
    \Cart(\cart, \cartI/\functor\bas)
  \]
  reflects isomorphisms too. Therefore, if the natural transformation
  \(\functor\poly_{\pr} \To \poly_{\functor\pr}\functor\) is an
  isomorphism, so is the natural transformation
  \((\functor/\bas)\pr_{*} \To (\functor\pr)_{*}(\functor/\tot)\).
\end{proof}

\section{Exponentiable Arrows from Type Theories}
\label{sec:depend-type-theor}

The goal of this section is to construct an exponentiable arrow in a
category of theories written in a type theory. Among other aspects of
type theories, we think of a type theory as a framework for defining
\emph{theories}. In this sense, what we call a type theory can be
called a logical framework
\parencite{harper1993framework,nordstrom2001martin-lof}; see
\cref{exm:sltt,exm:mltt}. For each type theory, we introduce a
category of finite theories in \cref{sec:categories-theories}, and
construct an exponentiable arrow in the category of finite theories in
\cref{sec:expon-arrows-assoc}.

Informally, a \emph{type theory} is a formal system for deriving
judgments, but we need a formal definition of a type theory to make
the construction of the exponentiable arrow precise. A definition of a
general type theory can vary according to purposes (see
\cite{bauer2020general,isaev2017algebraic,uemura2019framework}
for some approaches to general definitions of type theories), and our
requirements for the definition of a type theory are as follows:
\begin{itemize}
\item for each type theory \(\tth\), one can form a category of finite
  theories within \(\tth\);
\item a type theory is a sufficiently rich structure for building an
  exponentiable arrow in the category of finite theories;
\item one can verify that a wide variety of concrete type theories,
  which are usually presented syntactically, are instances of the
  definition.
\end{itemize}
We give a definition of a type theory fulfilling these requirements in
\cref{sec:type-theories}. We note that, for our purpose, we need not
exclude bad type theories as long as they have enough structure, and
there are indeed ill-behaved examples of our definition which people
would not consider type theories; see \cref{rem:ill-behaved-tt}.

We assume that the reader is familiar with the syntax of dependent
type theory
\parencite{barendregt1992lambda,cartmell1978generalised,hofmann1997syntax,nordstrom1990programming}.

\subsection{Type Theories}
\label{sec:type-theories}

A type theory consists of a grammar for raw expressions and a set of
inference rules. We regard a grammar for raw expressions as a map
sending a set of symbols to a set of expressions generated by the
symbols (\cref{def:expressions}) and a set of inference rules as a map
sending a set of basic judgments to a set of derivable judgments
(\cref{def:type-theory}).

The \emph{substitution} operator is the most important structure on
sets of raw expressions for our purpose. Following
\textcite{altenkirch2010monads}, we formulate the substitution
operator using the notion of a \emph{relative monad}, which is almost
a monad but the domain is restricted.

\begin{definition}
  Let \(\cat\) be a category and \(\cat_{0} \subset \cat\) is a full
  subcategory. A \emph{relative monad on the inclusion \(\cat_{0} \to
    \cat\)} consists of the following data:
  \begin{itemize}
  \item a map on objects \(\functor : \cat_{0} \to \cat\);
  \item for any \(\obj \in \cat_{0}\), a map
    \(\unit_{\obj} : \obj \to \functor\obj\) called the \emph{unit};
  \item for any \(\obj, \objI \in \cat_{0}\), a map
    \((\argu)^{*} : \cat(\obj, \functor\objI) \to \cat(\functor\obj,
    \functor\objI)\) called the \emph{Kleisli extension}
  \end{itemize}
  satisfying the axioms analogous to those of a monad.
\end{definition}

Let us fix an infinite set \(\Var\) of variables
\(\var, \varI, \dots\). We write \(\seq{\var}\) for a finite sequence
of distinct variables \((\var_{1}, \dots, \var_{\nat})\). We will
silently coerce a sequence \(\seq{\var}\) to the set
\(\{\var_{1}, \dots, \var_{\nat}\}\). Let \(\Var^{\astd}\) denote the
set of finite sequences of distinct variables. We further assume that,
for every \(\seq{\var} \in \Var^{\astd}\), a fresh variable
\(\var_{0} \not\in \seq{\var}\) is chosen. We define a functor
\(\var_{0}^{*} : \Set^{\Var^{\astd}} \to \Set^{\Var^{\astd}}\) by
\(\var_{0}^{*}\set(\seq{\var}) = \set(\var_{0}, \seq{\var})\). The
functor \(\var_{0}^{*}\) has a left adjoint \((\var_{0})_{!}\) defined
by \((\var_{0})_{!}\set(\var_{0}, \seq{\var}) = \set(\seq{\var})\) and
\((\var_{0})_{!}\set(\seq{\varI}) = \emptyset\) for other
\(\seq{\varI}\).

\begin{definition}
  \label{def:expressions}
  A \emph{theory of expressions} consists of the following data:
  \begin{itemize}
  \item for each pair \((\set, \seq{\var})\) consisting of a family of
    sets \(\set : \Var^{\astd} \to \Set\) and a finite sequence of
    variables \(\seq{\var}\), a set \(\thexp(\set, \seq{\var})\);
  \item a structure making the currying of \(\thexp\) a monad on
    \(\Set^{\Var^{\astd}}\). We refer to the Kleisli extension of a map
    \(\map : \set_{1} \to \thexp(\set_{2})\) as
    \(\map^{\Kl} : \thexp(\set_{1}) \to \thexp(\set_{2})\);
  \item for each \(\set : \Var^{\astd} \to \Set\), a structure making
    the map \(\thexp(\set, \argu)\) a relative monad on the inclusion
    \(\Var^{\astd} \to \Set\), regarding \(\Var^{\astd}\) as a full
    subcategory of \(\Set\). We refer to the unit
    \(\seq{\var} \to \thexp(\set, \seq{\var})\) as
    \(\unit_{\set, \seq{\var}}\) and the Kleisli extension of a map
    \(\mor : \seq{\varI} \to \thexp(\set, \seq{\var})\) as
    \(\mor^{\KlI} : \thexp(\set, \seq{\varI}) \to \thexp(\set,
    \seq{\var})\);
  \item a natural transformation
    \(\lax : \thexp\var_{0}^{*} \To \var_{0}^{*}\thexp :
    \Set^{\Var^{\astd}} \to \Set^{\Var^{\astd}}\) compatible with the
    monad structure on \(\thexp\) (precisely, a natural transformation
    making
    \(\var_{0}^{*} : \Set^{\Var^{\astd}} \to \Set^{\Var^{\astd}}\) a
    lax endomorphism on the monad \(\thexp\)).
  \end{itemize}
  Moreover, these (relative) monads are required to satisfy the
  following:
  \begin{enumerate}
  \item \label[axiom]{axiom:5}
    for any map \(\map : \set_{1} \to \thexp(\set_{2})\) in
    \(\Set^{\Var^{\astd}}\) and \(\seq{\var} \in \Var^{\astd}\), we have
    \(\map^{\Kl}_{\seq{\var}} \circ \unit_{\set_{1}, \seq{\var}} =
    \unit_{\set_{2}, \seq{\var}}\)
    \[
      \begin{tikzcd}
        \seq{\var}
        \arrow[rd, "\unit_{\set_{2}, \seq{\var}}"]
        \arrow[d, "\unit_{\set_{1}, \seq{\var}}"'] \\
        \thexp(\set_{1}, \seq{\var})
        \arrow[r, "\map^{\Kl}_{\seq{\var}}"'] &
        \thexp(\set_{2}, \seq{\var});
      \end{tikzcd}
    \]
  \item \label[axiom]{axiom:6}
    for any map \(\map : \set_{1} \to \thexp(\set_{2})\) in
    \(\Set^{\Var^{\astd}}\) and any map
    \(\mor : \seq{\varI} \to \thexp(\set_{1}, \seq{\var})\), we have
    \(\map^{\Kl}_{\seq{\var}} \circ \mor^{\KlI} =
    (\map^{\Kl}_{\seq{\var}} \circ \mor)^{\KlI} \circ
    \map^{\Kl}_{\seq{\varI}}\)
    \[
      \begin{tikzcd}
        \thexp(\set_{1}, \seq{\varI})
        \arrow[r, "\mor^{\KlI}"]
        \arrow[d, "\map^{\Kl}_{\seq{\varI}}"'] &
        \thexp(\set_{1}, \seq{\var})
        \arrow[d, "\map^{\Kl}_{\seq{\var}}"] \\
        \thexp(\set_{2}, \seq{\varI})
        \arrow[r, "(\map^{\Kl}_{\seq{\var}}\mor)^{\KlI}"'] &
        \thexp(\set_{2}, \seq{\var});
      \end{tikzcd}
    \]
  \item \label[axiom]{axiom:7} the units of the monad \(\thexp\) and
    the relative monad \(\thexp(\set, \argu)\) are monomorphisms;
  \item \label[axiom]{axiom:8} the Kleisli extension \(\map \mapsto
    \map^{\Kl}\) preserves monomorphisms.
  \end{enumerate}
\end{definition}

\begin{example}
  \label{exm:type-theory}
  A lot of theories of expressions are specified grammatically. For
  example, the grammar
  \[
    \expr, \exprI ::= \var \mid \cst(\expr_{1}, \dots, \expr_{\nat})
  \]
  defines the following theory of expressions \(\thexp_{0}\). For a
  family of sets \(\set : \Var^{\astd} \to \Set\), the family of sets
  \(\thexp_{0}(\set, \argu) : \Var^{\astd} \to \Set\) is inductively
  defined as follows:
  \begin{itemize}
  \item if \(\var_{\idx} \in \seq{\var}\), then \(\var_{\idx} \in
    \thexp_{0}(\set, \seq{\var})\);
  \item if \(\cst \in \thexp_{0}(\set, \seq{\varI})\) and
    \(\mor : \seq{\varI} \to \thexp_{0}(\set, \seq{\var})\) is a map,
    then \(\cst(\mor) \in \thexp_{0}(\set, \seq{\var})\).
  \end{itemize}
  The monad structure on \(\thexp_{0}\) is given by extending a map
  \(\map : \set_{1} \to \thexp_{0}(\set_{2})\) in
  \(\Set^{\Var^{\astd}}\) to a map
  \(\map^{\Kl} : \thexp_{0}(\set_{1}) \to \thexp_{0}(\set_{2})\) by
  induction. The relative monad structure on \(\thexp(\set, \argu)\)
  is given by substitution. \(\thexp_{0}(\var_{0}^{*}\set, \argu)\) is
  the family of sets of expressions built out of the symbols
  \(\cst(\var_{0}, \seq{\var})\) of \(\set\) regarded as symbols with
  arity \(\seq{\var}\). We thus have a natural map
  \(\thexp_{0}(\var_{0}^{*}\set, \seq{\var}) \to \thexp_{0}(\set,
  (\var_{0}, \seq{\var}))\) which defines a lax morphism structure on
  \(\var_{0}^{*}\).
\end{example}

As \cref{exm:type-theory} illustrates, we think of
\(\thexp(\set, \seq{\var})\) as a set of expressions over the set of
symbols \(\set\) with variables \(\seq{\var}\). For a map
\(\mor : \seq{\varI} \to \thexp(\set, \seq{\var})\), the Kleisli
extension \(\mor^{\KlI}\) is thought of as the \emph{substitution of
  \(\mor\)}, and thus we write \({\expr}[\mor]\) for
\(\mor^{\KlI}(\expr)\). For a map
\(\map : \set_{1} \to \thexp(\set_{2})\), the Kleisli extension
\(\map^{\Kl}\) is thought of as the extension of the assignment
\(\map\) to symbols to an assignment to all
expressions. \Cref{axiom:5,axiom:6} mean that \(\map^{\Kl}\) is
identity on variables and commutes with
substitution. \Cref{axiom:7,axiom:8} are not essential, but we assume
them for notational conventions. By \cref{axiom:7}, we regard
\(\seq{\var}\) and \(\set(\seq{\var})\) as subsets of
\(\thexp(\set, \seq{\var})\). \Cref{axiom:8} implies that if
\(\set_{1} \subset \set_{2}\) then
\(\thexp(\set_{1}) \subset \thexp(\set_{2})\). For a subsequence
\(\seq{\varI} \subset \seq{\var}\), the substitution of the
restriction
\(\unit|_{\seq{\varI}} : \seq{\varI} \subset \seq{\var} \to
\thexp(\set, \seq{\var})\) is called the \emph{weakening}, and we will
omit \([\unit|_{\seq{\varI}}]\) so that an expression
\(\expr \in \thexp(\set, \seq{\varI})\) may be regarded as an
expression in \(\thexp(\set, \seq{\var})\).

\begin{definition}
  Let \(\thexp\) be a theory of expressions and
  \(\set : \Var^{\astd} \to \Set\) a family of sets. A \emph{context over
    \(\set\)} is a finite sequence of the form
  \[
    \var_{1} : \ty_{1}, \dots, \var_{\nat} : \ty_{\nat}
  \]
  where \(\var_{1}, \dots, \var_{\nat}\) are distinct variables, and
  \(\ty_{\idx} \in \thexp(\set, (\var_{1}, \dots, \var_{\idx -
    1}))\). We write \(\seq{\var} : \seq{\ty}\) for such a context. A
  \emph{statement over \((\set, \seq{\var})\)} is one of the following
  forms
  \begin{mathpar}
    \s{Ctx}
    \and
    \ty_{1} : \s{Type}
    \and
    \el_{1} : \ty_{1}
    \and
    \ty_{1} = \ty_{2} : \s{Type}
    \and
    \el_{1} = \el_{2} : \ty_{1}
  \end{mathpar}
  where \(\ty_{\idx}, \el_{\idx} \in \thexp(\set, \seq{\var})\). A
  \emph{judgment over \(\set\)} is a pair
  \(((\seq{\var} : \seq{\ty}), \judg)\) of a context
  \(\seq{\var} : \seq{\ty}\) over \(\set\) and a statement \(\judg\)
  over \((\set, \seq{\var})\).
\end{definition}

\begin{definition}
  Let \(\thexp\) be a theory of expressions. A
  \emph{\(\thexp\)-pretheory} \(\gat\) consists of the following data:
  \begin{itemize}
  \item a family of sets \(\set_{\gat} : \Var^{\astd} \to \Set\)
    equipped with a well-founded relation on
    \(\sum_{\seq{\var} \in \Var^{\astd}}\set_{\gat}(\seq{\var})\). For
    an element \(\cst \in \set_{\gat}(\seq{\var})\), we write
    \(\downset{\cst}\) for the subfamily of \(\set_{\gat}\) spanned by
    those elements below \(\cst\);
  \item for each \(\cst \in \set_{\gat}(\seq{\var})\), a pair
    \(\boundary_{\gat}(\cst) = (\ctx_{\gat}(\cst),
    \kind_{\gat}(\cst))\) where \(\ctx_{\gat}(\cst)\) is a context
    over \(\downset{\cst}\) of the form \((\seq{\var} : \seq{\ty})\)
    and \(\kind_{\gat}(\cst)\) is of either form of \(\s{Type}\),
    \(\ty_{1}\), \((\ty_{1} = \ty_{2} : \s{Type})\) or
    \((\el_{1} = \el_{2} : \ty_{1})\) with
    \(\ty_{\idx}, \el_{\idx} \in \thexp(\downset{\cst}, \seq{\var})\).
  \end{itemize}
\end{definition}
We write
\[
  \cst : \ctx \To \kind
\]
when \(\cst \in \set_{\gat}(\seq{\var})\) and
\(\boundary_{\gat}(\cst) = (\ctx, \kind)\). In practical example, the
well-founded relation on
\(\sum_{\seq{\var} \in \Var^{\astd}}\set_{\gat}(\seq{\var})\) is a
well-ordering, and in that case the \(\thexp\)-pretheory is presented
by a list like
\begin{align*}
  \cst_{1}
  &: \ctx_{1} \To \kind_{1} \\
  \cst_{2}
  &: \ctx_{2} \To \kind_{2} \\
  \cst_{3}
  &: \ctx_{3} \To \kind_{3} \\
  \vdots
\end{align*}
An element
\((\cst : \ctx \To \kind) \in \gat\) is called a \emph{symbol} when
\(\kind = \s{Type}\) or \(\kind = \ty\), and is called an
\emph{axiom} when \(\kind = (\ty_{1} = \ty_{2} : \s{Type})\) or
\(\kind = (\el_{1} = \el_{2} : \ty)\). The name of an axiom is often
irrelevant, so we write
\[
  \_ : \ctx \To \kind
\]
to mean that \(\gat\) has an axiom of the form
\(\cst : \ctx \To \kind\). For an element
\((\cst : \ctx \To \kind) \in \gat\), we define the \emph{basic
  judgment} \((\ctx(\cst), \judg(\cst))\) as
\begin{itemize}
\item \(\judg(\cst) = (\cst : \kind)\) when \(\cst\) is a symbol;
\item \(\judg(\cst) = \kind\) when \(\cst\) is an axiom.
\end{itemize}
We write \(\gat_{1} \subset \gat_{2}\) when \(\set_{\gat_{1}}\) is a
downward closed subfamily of \(\set_{\gat_{2}}\) and
\(\boundary_{\gat_{1}} = \boundary_{\gat_{2}}|_{\set_{\gat_{1}}}\).

\begin{construction}
  Let \(\thexp\) be a theory of expressions and \(\gat\) a
  \(\thexp\)-pretheory. Recall that we have an adjunction
  \((\var_{0})_{!} \adj \var_{0}^{*} : \Set^{\Var^{\astd}} \to
  \Set^{\Var^{\astd}}\) and \(\var_{0}^{*}\) is a lax endomorphism on
  the monad \(\thexp\) with natural transformation
  \(\lax : \thexp\var_{0}^{*} \To \var_{0}^{*}\thexp\). By an adjoint
  argument, the left adjoint \((\var_{0})_{!}\) is an oplax
  endomorphism on the monad \(\thexp\) with a natural transformation
  \(\oplax : (\var_{0})_{!}\thexp \To \thexp(\var_{0})_{!}\), and thus
  \((\var_{0})_{!}\) extends to an endofunctor on the Kleisli category
  for \(\thexp\): for a map \(\map : \set_{1} \to \thexp(\set_{2})\),
  we have a map
  \((\var_{0})_{!}\map : (\var_{0})_{!}\set_{1} \to
  \thexp((\var_{0})_{!}\set_{2})\) by the composite
  \[
    (\var_{0})_{!}\set_{1} \xrightarrow{(\var_{0})_{!}\map}
    (\var_{0})_{!}\thexp(\set_{2}) \xrightarrow{\oplax}
    \thexp((\var_{0})_{!}\set_{2}).
  \]
  Furthermore, \(\oplax\) becomes natural with respect to morphisms in
  the Kleisli category: for any map \(\map : \set_{1} \to
  \thexp(\set_{2})\), the diagram
  \[
    \begin{tikzcd}
      (\var_{0})_{!}\thexp(\set_{1})
      \arrow[r, "\oplax"]
      \arrow[d, "(\var_{0})_{!}\map^{\Kl}"'] &
      \thexp((\var_{0})_{!}\set_{1})
      \arrow[d, "((\var_{0})_{!}\map)^{\Kl}"] \\
      (\var_{0})_{!}\thexp(\set_{2})
      \arrow[r, "\oplax"'] &
      \thexp((\var_{0})_{!}\set_{2})
    \end{tikzcd}
  \]
  commutes. By transpose, we have a natural transformation
  \[
    \hypsz{\var_{0}} : \thexp(\set) \to
    \var_{0}^{*}\thexp((\var_{0})_{!}\set)
  \]
  such that the diagram
  \begin{equation}
    \label[diagram]{eq:29}
    \begin{tikzcd}
      \thexp(\set_{1})
      \arrow[r, "\hypsz{\var_{0}}"]
      \arrow[d, "\map^{\Kl}"'] &
      \var_{0}^{*}\thexp((\var_{0})_{!}\set_{1})
      \arrow[d, "\var_{0}^{*}((\var_{0})_{!}\map)^{\Kl}"] \\
      \thexp(\set_{2})
      \arrow[r, "\hypsz{\var_{0}}"'] &
      \var_{0}^{*}\thexp((\var_{0})_{!}\set_{2})
    \end{tikzcd}
  \end{equation}
  commutes. For a fresh symbol \(\ty_{0}\), we define
  \(\gat^{\ty_{0}}\) to be the following \(\thexp\)-pretheory.
  \begin{align*}
    \ty_{0}
    &: () \To \s{Type} \\
    \hypsz{\var_{0}}\cst
    &: (\var_{0} : \ty_{0}, \hypsz{\var_{0}}\ctx) \To
      \hypsz{\var_{0}}\kind
    & ((\cst : \ctx \To \kind) \in \gat)
  \end{align*}
\end{construction}

\begin{definition}
  \label{def:type-theory}

  \def\TTref{TT\arabic*}

  A \emph{type theory} \(\tth\) consists of the following data:
  \begin{itemize}
  \item a theory of expressions \(\thexp_{\tth}\);
  \item for each \(\thexp_{\tth}\)-pretheory \(\gat\), a set
    \(\derivable_{\tth}(\gat)\) of judgments over \(\set\). Judgments
    in \(\derivable_{\tth}(\gat)\) are called \emph{derivable
      judgments}. We write \(\ctx \vdash_{\gat} \judg\) when
    \((\ctx; \judg) \in \derivable_{\tth}(\gat)\).
  \end{itemize}
  Furthermore, it is required to satisfy the following conditions:
  \begin{enumerate}[label=(\TTref),ref=\TTref]
  \item \label[axiom]{axiom:9} \(\derivable_{\tth}(\gat)\) is closed
    under the rules listed in \cref{fig:derivable-judgments} and rules
    for \(\ty_{1} = \ty_{2} : \s{Type}\) and
    \(\el_{1} = \el_{2} : \ty\) to be congruence relations;
  \item \label[axiom]{axiom:10} derivable judgments are stable under
    \emph{hypothesizing}: for any \(\thexp_{\tth}\)-pretheory \(\gat\)
    and for any symbol \(\ty_{0} \not\in \gat\), if
    \(\ctx \vdash_{\gat} \judg\) then
    \(\var_{0} : \ty_{0}, \hypsz{\var_{0}}\ctx \vdash_{\gat^{\ty_{0}}}
    \hypsz{\var_{0}}\judg\);
  \item \label[axiom]{axiom:11} for any \(\thexp_{\tth}\)-pretheories
    \(\gat_{1}\) and \(\gat_{2}\), and for any map
    \(\itpr : \set_{\gat_{1}} \to \thexp_{\tth}(\set_{\gat_{2}})\), if
    \(\itpr^{\Kl}(\ctx) \vdash_{\gat_{2}} \itpr^{\Kl}(\judg(\cst))\)
    for all \((\cst : \ctx \To \kind) \in \gat_{1}\), then
    \(\itpr^{\Kl}(\ctx) \vdash_{\gat_{2}} \itpr^{\Kl}(\judg)\) for all
    \(\ctx \vdash_{\gat_{1}} \judg\);
  \item \label[axiom]{axiom:12} for any \(\thexp_{\tth}\)-pretheories
    \(\gat_{1}\) and \(\gat_{2}\), and for any maps
    \(\itpr_{1}, \itpr_{2} : \set_{\gat_{1}} \to
    \thexp_{\tth}(\set_{\gat_{2}})\), if
    \(\itpr_{1}^{\Kl}(\ctx) \vdash_{\gat_{2}} \itpr_{1}(\cst) =
    \itpr_{2}(\cst) : \itpr_{1}^{\Kl}(\kind)\) for all symbols
    \((\cst : \ctx \To \kind) \in \gat_{1}\), then
    \(\itpr_{1}^{\Kl}(\ctx) \vdash_{\gat_{2}} \itpr_{1}^{\Kl}(\ty) =
    \itpr_{2}^{\Kl}(\ty) : \s{Type}\) for all
    \(\ctx \vdash_{\gat_{1}} \ty : \s{Type}\), and
    \(\itpr_{1}^{\Kl}(\ctx) \vdash_{\gat_{2}} \itpr_{1}^{\Kl}(\el) =
    \itpr_{2}^{\Kl}(\el) : \itpr_{1}^{\Kl}(\ty)\) for all
    \(\ctx \vdash_{\gat_{1}} \el : \ty\).
  \end{enumerate}
  \begin{figure}
    \begin{mathpar}
      \inferrule
      {
        \ctx \vdash_{\gat} \s{Ctx}
      }
      {\ctx \vdash_{\gat} \cst : \s{Type}}
      \ ((\cst : \ctx \To \s{Type}) \in \gat)
      \and
      \inferrule
      {
        \ctx \vdash_{\gat} \ty : \s{Type}
      }
      {\ctx \vdash_{\gat} \cst : \ty}
      \ ((\cst : \ctx \To \ty) \in \gat)
      \and
      \inferrule
      {
        \ctx \vdash_{\gat} \ty_{1} : \s{Type} \\
        \ctx \vdash_{\gat} \ty_{2} : \s{Type}
      }
      {\ctx \vdash_{\gat} \ty_{1} = \ty_{2} : \s{Type}}
      \ ((\_ : \ctx \To \ty_{1} = \ty_{2} : \s{Type}) \in \gat)
      \and
      \inferrule
      {
        \ctx \vdash_{\gat} \el_{1} : \ty \\
        \ctx \vdash_{\gat} \el_{2} : \ty
      }
      {\ctx \vdash_{\gat} \el_{1} = \el_{2} : \ty}
      \ ((\_ : \ctx \To \el_{1} = \el_{2} : \ty) \in \gat)
      \and
      \inferrule
      { }
      {{} \vdash_{\gat} \s{Ctx}}
      \and
      \inferrule
      {
        \ctx \vdash_{\gat} \ty : \s{Type}
      }
      {\ctx, \var : \ty \vdash_{\gat} \s{Ctx}}
      \ (\var \not\in \ctx)
      \and
      \inferrule[Weakening]
      {
        \ctx \vdash_{\gat} \ty : \s{Type} \\
        \ctx, \ctxI \vdash_{\gat} \judg
      }
      {\ctx, \var : \ty, \ctxI \vdash_{\gat} \judg}
      \ (\var \not\in \ctx, \ctxI)
      \and
      \inferrule[Projection]
      {
        \ctx \vdash_{\gat} \ty : \s{Type}
      }
      {\ctx, \var : \ty \vdash_{\gat} \var : \ty}
      \ (\var \not\in \ctx)
      \and
      \inferrule[Substitution]
      {
        \ctx \vdash_{\gat} \el : \ty \\
        \ctx, \var : \ty, \ctxI \vdash_{\gat} \judg
      }
      {\ctx, {\ctxI}[\el/\var] \vdash_{\gat} {\judg}[\el/\var]}
    \end{mathpar}
    \caption{Derivable judgments}
    \label{fig:derivable-judgments}
  \end{figure}
\end{definition}

\begin{example}
  \label{exm:tth-0}
  We define a type theory \(\tth_{0}\) as
  follows. \(\thexp_{\tth_{0}}\) is \(\thexp_{0}\) (see
  \cref{exm:type-theory}). \(\derivable_{\tth_{0}}(\gat)\) for a
  \(\thexp_{0}\)-pretheory \(\gat\) is the smallest set of judgments
  closed under the rules in
  \cref{fig:derivable-judgments}. \Cref{axiom:10,axiom:11,axiom:12}
  are verified by induction on derivation. \(\tth_{0}\) is the
  dependent type theory without any type constructors.
\end{example}

\begin{example}
  \label{exm:tth-Pi}
  One can easily define type theories with various type
  constructors. For example, we define a type theory \(\tth_{\Pi}\)
  with dependent function types (\(\Pi\)-types) by extending the
  grammar for raw expressions of \(\thexp_{0}\) as
  \[
    \expr, \exprI ::= \dots \mid \textstyle{\prod_{\var :
        \expr}\exprI} \mid \lambda(\var : \expr).\exprI \mid
    \expr\exprI,
  \]
  where the variable \(\var\) in the expression \(\var.\exprI\) is
  considered to be bound, and requiring the set of derivable judgments
  to be closed under the additional inference rules listed in
  \cref{fig:pi-types}. The type annotation in
  \(\lambda(\var : \ty).\elI\) is often omitted, and we write \(\ty
  \to \tyI\) for \(\prod_{\var : \ty}\tyI\) when \(\tyI\) does not
  contain \(\var\) as a free variable.
  \begin{figure}
    \begin{mathpar}
      \inferrule
      {\ctx \vdash \ty : \s{Type} \\
        \ctx, \var : \ty \vdash \tyI : \s{Type}}
      {\ctx \vdash \textstyle{\prod_{\var : \ty}\tyI} : \s{Type}}
      \and
      \inferrule
      {\ctx, \var : \ty \vdash \elI : \tyI}
      {\ctx \vdash \lambda(\var : \ty).\elI : \textstyle{\prod_{\var :
          \ty}\tyI}}
      \and
      \inferrule
      {\ctx \vdash \elI : \textstyle{\prod_{\var : \ty}\tyI} \\
        \ctx \vdash \el : \ty}
      {\ctx \vdash \elI\el : {\tyI}[\el/\var]}
      \and
      \inferrule
      {\ctx, \var : \ty \vdash \elI : \tyI \\
        \ctx \vdash \el : \ty}
      {\ctx \vdash (\lambda(\var : \ty).\elI)\el = {\elI}[\el/\var] : {\tyI}[\el/\var]}
      \and
      \inferrule
      {\ctx \vdash \elI : \textstyle{\prod_{\var : \ty}\tyI}}
      {\ctx \vdash \lambda(\var : \ty).\elI\var = \elI :
        \textstyle{\prod_{\var : \ty}\tyI}}
      \ (\var \not\in \ctx)
    \end{mathpar}
    \caption{Rules for \(\Pi\)-types}
    \label{fig:pi-types}
  \end{figure}
\end{example}

\begin{remark}
  When we specify a type theory by a set of inference rules as in
  \cref{exm:tth-0,exm:tth-Pi}, \cref{axiom:11,axiom:12} are verified
  by induction on derivation. The stability under hypothesizing
  (\cref{axiom:10}) is also verified by induction on derivation,
  provided that if
  \[
    \inferrule
    {
      \ctx_{1} \vdash \judg_{1} \\
      \dots \\
      \ctx_{\nat} \vdash \judg_{\nat}
    }
    {\ctx \vdash \judg}
  \]
  is an instance of an inference rule, then
  \[
    \inferrule
    {
      \var_{0} : \ty_{0}, \hypsz{\var_{0}}\ctx_{1} \vdash
      \hypsz{\var_{0}}\judg_{1} \\
      \dots \\
      \var_{0} : \ty_{0}, \hypsz{\var_{0}}\ctx_{\nat} \vdash
      \hypsz{\var_{0}}\judg_{\nat}
    }
    {\var_{0} : \ty_{0}, \hypsz{\var_{0}}\ctx \vdash \hypsz{\var_{0}}\judg}
  \]
  is also an instance of the inference rule. This condition is
  satisfied both in \cref{exm:tth-0,exm:tth-Pi}.
\end{remark}

\begin{remark}
  \label{rem:ill-behaved-tt}
  We do not exclude ill-behaved inference rules. For an extreme
  example, the system in which all judgments are derivable is a type
  theory in the sense of \cref{def:type-theory}. This is not a problem
  for our purpose of constructing an exponentiable arrow in a category
  of theories: for an ill-behaved type theory, the category of
  theories will just get degenerate.
\end{remark}

\begin{definition}
  Let \(\tth\) be a type theory. A \emph{\(\tth\)-theory} is a
  \(\thexp_{\tth}\)-pretheory \(\gat\) satisfying the following
  \emph{well-formedness conditions}:
  \begin{itemize}
  \item \(\ctx \vdash_{\downset{\cst}} \s{Ctx}\) for any
    \((\cst : \ctx \To \s{Type}) \in \gat\);
  \item \(\ctx \vdash_{\downset{\cst}} \ty : \s{Type}\) for any
    \((\cst : \ctx \To \ty) \in \gat\);
  \item \(\ctx \vdash_{\downset{\cst}} \ty_{1} : \s{Type}\) and
    \(\ctx \vdash_{\downset{\cst}} \ty_{2} : \s{Type}\) for any
    \((\cst : \ctx \To \ty_{1} = \ty_{2} : \s{Type}) \in \gat\);
  \item \(\ctx \vdash_{\downset{\cst}} \el_{1} : \ty\) and \(\ctx
    \vdash_{\downset{\cst}} \el_{2} : \ty\) for any \((\cst : \ctx \To
    \el_{1} = \el_{2} : \ty) \in \gat\).
  \end{itemize}
\end{definition}

\begin{example}
  \label{exm:t0-theory}
  A \(\tth_{0}\)-theory is essentially the same as a generalized
  algebraic theory \parencite{cartmell1978generalised}. The difference
  is that in Cartmell's definition the set of symbols and axioms is
  not equipped with a well-founded relation. However, one can define a
  canonical well-founded relation on symbols and axioms:
  \(\cst' < \cst\) if \(\cst'\) appears in the derivation tree for the
  well-formedness condition for \(\cst\). We will also see that
  \(\tth\)-theories with the same symbols and axioms but with
  different well-founded relations are identified
  (\cref{rem:on-theories}).
\end{example}

\begin{example}
  \label{exm:gat-of-cat}
  The \emph{\(\tth_{0}\)-theory of categories} consists of the
  following data.
  \begin{align*}
    \thcatOb
    &: () \To \s{Type} \\
    \thcatHom
    &: (\var_{1} : \thcatOb, \var_{2} : \thcatOb) \To \s{Type} \\
    \thcatid
    &: (\var : \thcatOb) \To \thcatHom(\var, \var) \\
    \thcatcomp
    & : (\var_{1} : \thcatOb, \var_{2} : \thcatOb, \var_{3}
      : \thcatOb, \varI_{1} : \thcatHom(\var_{1}, \var_{2}), \varI_{2} :
      \thcatHom(\var_{2}, \var_{3})) \To \thcatHom(\var_{1}, \var_{3})
    \\
    \_
    &: (\var_{1} : \thcatOb, \var_{2} : \thcatOb, \varI :
      \thcatHom(\var_{1}, \var_{2})) \To \thcatcomp(\var_{1}, \var_{1},
      \var_{2}, \thcatid(\var_{1}), \varI) = \varI \\
    \_
    &: (\var_{1} : \thcatOb, \var_{2} : \thcatOb, \varI :
      \thcatHom(\var_{1}, \var_{2})) \To \thcatcomp(\var_{1}, \var_{2},
      \var_{2}, \varI, \thcatid(\var_{2})) = \varI \\
    \_
    &: (\var_{1} : \thcatOb, \var_{2} : \thcatOb, \var_{3} :
      \thcatOb, \var_{4} : \thcatOb, \varI_{1} : \thcatHom(\var_{1},
      \var_{2}), \varI_{2} : \thcatHom(\var_{2}, \var_{3}), \varI_{3}
      : \thcatHom(\var_{3}, \var_{4})) \\
    &\quad \To \thcatcomp(\var_{1}, \var_{3},
      \var_{4}, \thcatcomp(\var_{1}, \var_{2}, \var_{3}, \varI_{1},
      \varI_{2}), \varI_{3}) = \thcatcomp(\var_{1}, \var_{2}, \var_{4},
      \varI_{1}, \thcatcomp(\var_{2}, \var_{3}, \var_{4}, \varI_{2},
      \varI_{3}))
  \end{align*}
  This is read as follows:
  \begin{itemize}
  \item \(\thcatOb\) is a type of objects;
  \item \(\thcatHom(\var_{1}, \var_{2})\) is type of morphisms from
    \(\var_{1}\) to \(\var_{2}\) when \(\var_{1}\) and \(\var_{2}\)
    are elements of \(\thcatOb\);
  \item \(\thcatid(\var)\) is an element of \(\thcatHom(\var, \var)\)
    representing the identity on \(\var\) when \(\var\) is an element
    of \(\thcatOb\). The symbol \(\thcatcomp\) represents the
    composition operator;
  \item
    \(\thcatcomp(\var_{1}, \var_{1}, \var_{2}, \thcatid(\var_{1}),
    \varI)\) and \(\varI\) are equal when \(\var_{1}\) and
    \(\var_{2}\) are elements of \(\thcatOb\) and \(\varI\) is an
    element of \(\thcatHom(\var_{1}, \var_{2})\). The other equations
    are similar.
  \end{itemize}
\end{example}

Type theories with \(\Pi\)-types are often called \emph{logical
  frameworks} \parencite{harper1993framework,nordstrom2001martin-lof}
and useful for encoding type theories. \(\Pi\)-types in logical
frameworks are used for representing variable binding in target type
theories.

\begin{example}
  \label{exm:sltt}
  \def\lam{l}
  \def\app{\alpha}
  \def\fn{\mathrm{Fun}}

  The simply typed \(\lambda\)-calculus is encoded in the following
  \(\tth_{\Pi}\)-theory.
  \begin{align*}
    \Ty
    &: () \To \s{Type} \\
    \El
    &: (\ty : \Ty) \To \s{Type} \\
    \fn
    &: (\ty : \Ty, \tyI : \Ty) \To \Ty \\
    \lam
    &: (\ty : \Ty, \tyI : \Ty, \elI : \El(\ty) \to \El(\tyI))
      \To \El(\fn(\ty, \tyI)) \\
    \app
    &: (\ty : \Ty, \tyI : \Ty, \elI : \El(\fn(\ty, \tyI)), \el :
      \El(\ty)) \To \El(\tyI) \\
    \_
    &: (\ty : \Ty, \tyI : \Ty, \elI : \El(\ty) \to \El(\tyI),
      \el : \El(\ty)) \To \app(\ty, \tyI, \lam(\ty, \tyI, \elI), \el)
      = \elI\el \\
    \_
    &: (\ty : \Ty, \tyI : \Ty, \elI : \El(\fn(\ty, \tyI))) \To
      \lam(\ty, \tyI, \lambda\var.\app(\ty, \tyI, \elI,
      \var)) = \elI
  \end{align*}
  \(\Ty\) and \(\El(\ty)\) represent the sets of types and of terms of
  type \(\ty\), respectively, in the simply typed
  \(\lambda\)-calculus. \(\fn(\ty, \tyI)\) represents the type of
  functions from \(\ty\) to \(\tyI\). Notice that the
  \(\lambda\)-abstraction is then represented by the higher-order
  function \(\lam\).
\end{example}

\begin{example}
  \label{exm:mltt}

  \def\Nat{N}
  \def\zero{0}
  \def\succ{s}
  \def\rec{r}

  Martin-L{\"o}f type theory can also be encoded in a
  \(\tth_{\Pi}\)-theory. We refer the reader to
  \parencite{nordstrom2001martin-lof} for details. Like
  \cref{exm:sltt}, we first introduce two symbols
  \begin{align*}
    \Ty
    &: () \To \s{Type} \\
    \El
    &: (\ty : \Ty) \To \s{Type}.
  \end{align*}
  Dependent function types in Martin-L{\"o}f type theory is encoded in
  the same way as function types in the simply typed
  \(\lambda\)-calculus. To encode the type of natural numbers, we add
  the following symbols.
  \begin{align*}
    \Nat
    &: () \To \Ty \\
    \zero
    &: () \To \El(\Nat) \\
    \succ
    &: (\nat : \El(\Nat)) \To \El(\Nat) \\
    \rec
    &: (\nat : \El(\Nat), \ty : \El(\Nat) \to \Ty, \el_{\zero} :
      \El(\ty\zero), \\
    &\quad \el_{\succ} : \textstyle{\prod_{\var :
      \El(\Nat)}\El(\ty\var) \to \El(\ty(\succ(\var)))}) \To
      \El(\ty\nat) \\
    \_
    &: (\ty : \El(\Nat) \to \Ty, \el_{\zero} : \El(\ty\zero), \el_{\succ} :
      \textstyle{\prod_{\var : \El(\Nat)}\El(\ty\var) \to
      \El(\ty(\succ(\var)))}) \\
    &\quad \To \rec(\zero, \ty, \el_{\zero}, \el_{\succ}) =
      \el_{\zero} \\
    \_
    &: (\nat : \El(\Nat), \ty : \El(\Nat) \to \Ty, \el_{\zero} :
      \El(\ty\zero), \el_{\succ} : \textstyle{\prod_{\var :
      \El(\Nat)}\El(\ty\var) \to \El(\ty(\succ(\var)))}) \\
    &\quad \To \rec(\succ(\nat), \ty, \el_{\zero}, \el_{\succ}) =
      \el_{\succ}\nat(\rec(\nat, \ty, \el_{\zero}, \el_{\succ}))
  \end{align*}
  One can similarly encode other inductive types.
\end{example}

\subsection{Categories of Theories}
\label{sec:categories-theories}

Let \(\tth\) denote a type theory.

\begin{definition}
  Let \(\gat_{1}\) and \(\gat_{2}\) be \(\tth\)-theories. An
  \emph{interpretation from \(\gat_{1}\) to \(\gat_{2}\)} is a map
  \(\itpr : \set_{\gat_{1}} \to \thexp_{\tth}(\set_{\gat_{2}})\) in
  \(\Set^{\Var^{\astd}}\) such that
  \(\itpr^{\Kl}(\ctx) \vdash_{\gat_{2}} \itpr^{\Kl}(\judg(\cst))\) for
  any \((\cst : \ctx \To \kind) \in \gat_{1}\). Two interpretations
  \(\itpr_{1}, \itpr_{2} : \gat_{1} \to \gat_{2}\) are said to be
  \emph{equivalent} if
  \(\itpr_{1}^{\Kl}(\ctx) \vdash_{\gat_{2}} \itpr_{1}(\cst) =
  \itpr_{2}(\cst) : \itpr_{1}^{\Kl}(\kind)\) for any symbol
  \((\cst : \ctx \To \kind)\) in \(\gat_{1}\).
\end{definition}

\begin{remark}
  By \cref{axiom:11} of \cref{def:type-theory}, for an interpretation
  \(\itpr : \gat_{1} \to \gat_{2}\), if
  \(\ctx \vdash_{\gat_{1}} \judg\) then
  \(\itpr^{\Kl}(\ctx) \vdash_{\gat_{2}} \itpr^{\Kl}(\judg)\). By
  \cref{axiom:12}, for equivalent interpretations
  \(\itpr_{1}, \itpr_{2} : \gat_{1} \to \gat_{2}\), if
  \(\ctx \vdash_{\gat_{1}} \expr : \kind\) then
  \(\itpr_{1}^{\Kl}(\ctx) \vdash_{\gat_{2}} \itpr_{1}^{\Kl}(\expr) =
  \itpr_{2}^{\Kl}(\expr) : \itpr_{1}^{\Kl}(\kind)\).
\end{remark}

\begin{remark}
  \label{rem:on-theories}
  From the definition of equivalence of interpretations, only values
  at symbols are relevant. Therefore, when defining an interpretation,
  we only specify values at symbols (values at axioms can be
  arbitrary). The well-founded relation on symbols and axioms of a
  \(\tth\)-theory is also irrelevant: if \(\gat\) and \(\gat'\) differ
  only in well-founded relations on symbols and axioms, then the
  identity map defines an isomorphism \(\gat \cong \gat'\).
\end{remark}

\begin{example}
  Let \(\gat_{1}\) be the \emph{\(\tth_{0}\)-theory of monoids}
  written as follows.
  \begin{align*}
    \thmonMon
    &: () \To \s{Type} \\
    \thmonuni
    &: () \To \thmonMon \\
    \thmonmul
    &: (\varI_{1} : \thmonMon, \varI_{2} : \thmonMon) \To \thmonMon \\
    \_
    &: (\varI : \thmonMon) \To \thmonmul(\thmonuni, \varI) = \varI \\
    \_
    &: (\varI : \thmonMon) \To \thmonmul(\varI, \thmonuni) = \varI \\
    \_
    &: (\varI_{1} : \thmonMon, \varI_{2} : \thmonMon, \varI_{3} : \thmonMon) \To
      \thmonmul(\thmonmul(\varI_{1}, \varI_{2}), \varI_{3}) = \thmonmul(\varI_{1},
      \thmonmul(\varI_{2}, \varI_{3}))
  \end{align*}
  Let \(\gat_{2}\) be the extension of the \(\tth_{0}\)-theory of
  categories (\cref{exm:gat-of-cat}) with a symbol
  \[
    \thcatbase : () \To \thcatOb.
  \]
  We have an interpretation \(\gat_{1} \to \gat_{2}\) as follows.
  \begin{align*}
    \thmonMon
    &\mapsto ({} \vdash \thcatHom(\thcatbase, \thcatbase) : \s{Type}) \\
    \thmonuni
    &\mapsto ({} \vdash \thcatid(\thcatbase) : \thcatHom(\thcatbase,
      \thcatbase)) \\
    \thmonmul
    &\mapsto (\varI_{1} : \thcatHom(\thcatbase, \thcatbase), \varI_{2}
      : \thcatHom(\thcatbase, \thcatbase) \vdash
      \thcatcomp(\thcatbase, \thcatbase, \thcatbase, \varI_{1},
      \varI_{2}) : \thcatHom(\thcatbase, \thcatbase))
  \end{align*}
  Note that this is a formal treatment of the fact that the set of
  endomorphisms at an object in a category is a monoid.
\end{example}

\begin{notation}
  We denote by \(\DTT_{\tth}\) the category whose objects are the
  \(\tth\)-theories and morphisms are the equivalence classes of the
  interpretations.
\end{notation}

\begin{definition}
  We say a \(\tth\)-theory \(\gat\) is \emph{finite} if the set
  \(\sum_{\seq{\var} \in \Var^{\astd}}\set_{\gat}(\seq{\var})\) of
  symbols and axioms is finite. We denote by \(\thdtt_{\tth}\) the
  opposite of the full subcategory of \(\DTT_{\tth}\) consisting of
  finite \(\tth\)-theories.
\end{definition}

\begin{proposition}
  \label{prop:dtt-cocomplete}
  The category \(\DTT_{\tth}\) is cocomplete: coproducts are given by
  disjoint union; coequalizers are obtained by adjoining equational
  axioms.
\end{proposition}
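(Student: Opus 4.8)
The plan is to prove that $\DTT_{\tth}$ is cocomplete by constructing coproducts and coequalizers explicitly, since a category with all coproducts and all coequalizers has all colimits.

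Let me think through the two constructions.

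**Coproducts.** For a family of $\tth$-theories $(\gat_i)_{i \in I}$, I want to take the disjoint union. The underlying family of symbols and axioms should be the coproduct $\coprod_i \set_{\gat_i}$ in $\Set^{\Var^{\astd}}$, which is the pointwise disjoint union. The well-founded relation should be the disjoint union of the well-founded relations (elements from different $\gat_i$ are incomparable). The boundary map $\boundary$ is inherited componentwise. I need to check this is well-formed: for each $\cst$ coming from $\gat_i$, the down-set $\downset{\cst}$ is exactly the down-set within $\gat_i$ (since there are no cross-relations), so the well-formedness condition is inherited directly from $\gat_i$. An interpretation out of $\coprod_i \gat_i$ into some $\gat$ is a map $\coprod_i \set_{\gat_i} \to \thexp_{\tth}(\set_\gat)$, which by the universal property of coproducts in $\Set^{\Var^{\astd}}$ is the same as a family of maps $\set_{\gat_i} \to \thexp_{\tth}(\set_\gat)$; the interpretation condition and the equivalence relation both decompose componentwise, giving the universal property.

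**Coequalizers.** For two interpretations $\itpr_1, \itpr_2 : \gat_1 \to \gat_2$, the hint says to adjoin equational axioms. The idea is to keep $\gat_2$ as is (same symbols, same underlying family of sets), but add, for each symbol $(\cst : \ctx \To \kind) \in \gat_1$, a new axiom forcing $\itpr_1(\cst) = \itpr_2(\cst)$. Concretely, for a symbol $\cst$ with $\kind = \s{Type}$ one adds the axiom $\_ : \itpr_1^{\Kl}(\ctx) \To \itpr_1^{\Kl}(\cst) = \itpr_2^{\Kl}(\cst) : \s{Type}$, and similarly for term symbols one adds an axiom of the form $\el_1 = \el_2 : \ty$. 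Call the resulting theory $\gat_2'$. The identity-on-symbols map $\gat_2 \to \gat_2'$ is an interpretation $\quo$, and I must verify the coequalizer property. First, $\quo \circ \itpr_1 = \quo \circ \itpr_2$ holds \emph{as equivalence classes}: the new axioms make $\itpr_1(\cst)$ and $\itpr_2(\cst)$ provably equal in $\gat_2'$ for every symbol $\cst \in \gat_1$, which is exactly the definition of equivalence of interpretations. Second, for any interpretation $\itpr : \gat_2 \to \gat_3$ with $\itpr \circ \itpr_1 \sim \itpr \circ \itpr_2$, I need a unique factorization through $\gat_2'$; I take the same underlying map $\set_{\gat_2} = \set_{\gat_2'} \to \thexp_{\tth}(\set_{\gat_3})$, and the hypothesis $\itpr \circ \itpr_1 \sim \itpr \circ \itpr_2$ says precisely that $\itpr$ sends each newly adjoined axiom to a derivable equation in $\gat_3$, so $\itpr$ remains a valid interpretation out of $\gat_2'$. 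Uniqueness is immediate since equivalence classes of interpretations are determined by values on symbols (\cref{rem:on-theories}) and $\gat_2$, $\gat_2'$ have the same symbols.

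The main obstacle I anticipate is bookkeeping around the well-formedness conditions and the use of \cref{axiom:11,axiom:12}. For coequalizers I must confirm that the adjoined axioms are themselves well-formed, i.e.\ that $\itpr_1^{\Kl}(\ctx) \vdash_{\gat_2} \itpr_1^{\Kl}(\cst) : \itpr_1^{\Kl}(\kind)$ and the same for the $\itpr_2$ side, which follows from $\itpr_1, \itpr_2$ being interpretations together with \cref{axiom:11}. The genuinely delicate point is verifying that $\itpr : \gat_2 \to \gat_3$ factoring through the \emph{new} axioms is still an interpretation on all of $\gat_2'$: derivability of the translated axioms must be checked, and here the equivalence hypothesis $\itpr \circ \itpr_1 \sim \itpr \circ \itpr_2$ supplies exactly the needed derivable equalities, via \cref{axiom:12} to propagate from symbols to all derivable statements. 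I expect the construction itself to be routine once the roles of \cref{axiom:11,axiom:12} are pinned down, and the two constructions together with the standard fact that coproducts plus coequalizers yield all colimits complete the proof. Note that since $\thdtt_{\tth}$ and $\DTT_{\tth}$ are related by opposite and finiteness, this cocompleteness statement is about $\DTT_{\tth}$ directly and does not require any finiteness restriction.
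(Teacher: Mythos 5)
Your overall strategy coincides with the paper's: coproducts as disjoint unions, coequalizers by adjoining equational axioms equating $\itpr_{1}(\cst)$ and $\itpr_{2}(\cst)$ for each symbol $\cst \in \gat_{1}$, and cocompleteness from coproducts plus coequalizers. The coproduct construction and both universal-property verifications are fine. But there is a genuine gap at exactly the point the paper flags as the reason one must ``be careful'': your claim that well-formedness of the adjoined axioms ``follows from $\itpr_{1}, \itpr_{2}$ being interpretations together with \cref{axiom:11}'' is false as stated. The axiom you adjoin for a symbol $(\cst : \ctx \To \kind) \in \gat_{1}$ is
\[
  \_ : \itpr_{1}^{\Kl}(\ctx) \To \itpr_{1}(\cst) = \itpr_{2}(\cst) : \itpr_{1}^{\Kl}(\kind),
\]
and its well-formedness requires \emph{both} sides to be well-typed in the context $\itpr_{1}^{\Kl}(\ctx)$ at the kind $\itpr_{1}^{\Kl}(\kind)$; in particular it requires $\itpr_{1}^{\Kl}(\ctx) \vdash \itpr_{2}(\cst) : \itpr_{1}^{\Kl}(\kind)$. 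What $\itpr_{2}$ being an interpretation gives is $\itpr_{2}^{\Kl}(\ctx) \vdash_{\gat_{2}} \itpr_{2}(\cst) : \itpr_{2}^{\Kl}(\kind)$ --- context and kind translated along $\itpr_{2}$, not $\itpr_{1}$ --- and the cross-translated judgment is in general \emph{not} derivable in $\gat_{2}$ at all. Concretely: let $\gat_{1}$ consist of $A : () \To \s{Type}$ and $a : () \To A$, let $\gat_{2}$ consist of $B : () \To \s{Type}$, $C : () \To \s{Type}$, $b : () \To B$, $c : () \To C$, and let $\itpr_{1} : (A, a) \mapsto (B, b)$ and $\itpr_{2} : (A, a) \mapsto (C, c)$. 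The axiom adjoined for $a$ reads $b = c : B$, and $\vdash c : B$ is not derivable in $\gat_{2}$; it becomes derivable only \emph{after} the axiom $B = C : \s{Type}$ adjoined for $A$ is available, via conversion.

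This is why the paper's proof must do two things your proposal omits: (a) equip the extended pretheory with a specific well-founded relation --- every element of $\gat_{2}$ lies below every new axiom $\axiom_{\cst}$, and $\axiom_{\cst} < \axiom_{\cst'}$ whenever $\cst < \cst'$ in $\gat_{1}$ --- so that the well-formedness condition for $\axiom_{\cst}$, which is stated relative to $\downset{\axiom_{\cst}}$ rather than $\gat_{2}$, has access to the previously adjoined axioms; and (b) argue by well-founded induction on $\cst \in \gat_{1}$: the axioms already adjoined for symbols below $\cst$ make the restrictions of $\itpr_{1}$ and $\itpr_{2}$ into \emph{equivalent} interpretations $\downset{\cst} \to \downset{\axiom_{\cst}}$, so \cref{axiom:12} yields $\vdash \itpr_{1}^{\Kl}(\ctx) = \itpr_{2}^{\Kl}(\ctx)$ and $\itpr_{1}^{\Kl}(\ctx) \vdash \itpr_{1}^{\Kl}(\kind) = \itpr_{2}^{\Kl}(\kind)$ in $\downset{\axiom_{\cst}}$, and rewriting along these equalities converts the $\itpr_{2}$-translated judgment into the required one. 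Note also that you have the difficulty assessment inverted: the factorization step you call ``genuinely delicate'' is essentially immediate, since the hypothesis $\itpr \circ \itpr_{1} \sim \itpr \circ \itpr_{2}$ is, symbol by symbol, \emph{literally} the derivability of the translated adjoined axioms (no appeal to \cref{axiom:12} needed there), while the well-formedness of the adjoined axioms is where \cref{axiom:12} and the well-founded induction are indispensable.
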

\begin{proof}
  This is straightforward, but we need to be careful to construct
  coequalizers. Let \(\itpr_{1}, \itpr_{2} : \gat_{1} \to \gat_{2}\)
  be interpretations between \(\tth\)-theories. We define \(\gat_{3}\)
  to be the \(\thexp_{\tth}\)-pretheory extending \(\gat_{2}\) with
  the axioms
  \[
    \axiom_{\cst} : \itpr_{1}^{\Kl}(\ctx) \To \itpr_{1}(\cst) =
    \itpr_{2}(\cst) : \itpr_{1}^{\Kl}(\kind)
  \]
  for all symbols \((\cst : \ctx \To \kind) \in \gat_{1}\), and with
  relation \(\cst_{2} < \axiom_{\cst}\) for any
  \(\cst_{2} \in \gat_{2}\) and \(\axiom_{\cst} < \axiom_{\cst'}\) for
  \(\cst < \cst'\) in \(\gat_{1}\). We then have to check the
  well-formedness condition
  \begin{equation}
    \label{eq:30}
    \itpr_{1}^{\Kl}(\ctx) \vdash_{\downset{\axiom_{\cst}}}
    \itpr_{2}(\cst) : \itpr_{1}^{\Kl}(\kind).
  \end{equation}
  This is achieved by well-founded induction on \(\cst \in
  \gat_{1}\). Suppose that the well-formedness condition
  \labelcref{eq:30} is satisfied for any \(\cst'\) below
  \(\cst\). Then we have
  \(\itpr_{1}^{\Kl}(\ctx') \vdash_{\downset{\axiom_{\cst'}}}
  \itpr_{1}(\cst') = \itpr_{2}(\cst') : \itpr_{1}^{\Kl}(\kind')\) for
  any symbol \((\cst' : \ctx' \To \kind')\) below \(\cst\). This means
  that \(\itpr_{1}\) and \(\itpr_{2}\) determine equivalent
  interpretations \(\downset{\cst} \to \downset{\axiom_{\cst}}\), and
  thus we have
  \({} \vdash_{\downset{\axiom_{\cst}}}\itpr_{1}^{\Kl}(\ctx) =
  \itpr_{2}^{\Kl}(\ctx)\) and
  \(\itpr_{1}^{\Kl}(\ctx) \vdash_{\downset{\axiom_{\cst}}}
  \itpr_{1}^{\Kl}(\kind) = \itpr_{2}^{\Kl}(\kind)\). By rewriting
  along these equalities in
  \(\itpr_{2}^{\Kl}(\ctx) \vdash_{\downset{\axiom_{\cst}}}
  \itpr_{2}(\cst) : \itpr_{2}^{\Kl}(\kind)\), we obtain \cref{eq:30}.
\end{proof}

\begin{corollary}
  Finite \(\tth\)-theories are closed under finite
  colimits. Consequently, the category \(\thdtt_{\tth}\) has finite
  limits.
\end{corollary}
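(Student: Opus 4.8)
The plan is to reduce closure under finite colimits to the two explicit constructions recorded in \cref{prop:dtt-cocomplete}, namely finite coproducts and coequalizers, and to observe that each of them manifestly preserves finiteness. Recall that a category with an initial object, binary coproducts and coequalizers has all finite colimits; concretely, the colimit of a finite diagram \(D : J \to \DTT_{\tth}\) is the coequalizer of the two evident arrows \(\coprod_{(u : j \to j') \in J} D(j) \rightrightarrows \coprod_{j \in J} D(j)\) between finite coproducts. Thus it suffices to check that the full subcategory of finite \(\tth\)-theories contains the initial object and is stable, inside \(\DTT_{\tth}\), under binary coproducts and coequalizers.

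For the initial object and coproducts this is immediate: the initial object is the empty \(\thexp_{\tth}\)-pretheory, which has no symbols or axioms, and by \cref{prop:dtt-cocomplete} a binary coproduct \(\gat_{1} \sqcup \gat_{2}\) is given by disjoint union, so that \(\sum_{\seq{\var}} \set_{\gat_{1} \sqcup \gat_{2}}(\seq{\var})\) is the disjoint union of the two finite sets \(\sum_{\seq{\var}} \set_{\gat_{1}}(\seq{\var})\) and \(\sum_{\seq{\var}} \set_{\gat_{2}}(\seq{\var})\), hence finite. For coequalizers I would use the construction in the proof of \cref{prop:dtt-cocomplete}: the coequalizer of a pair \(\itpr_{1}, \itpr_{2} : \gat_{1} \to \gat_{2}\) is obtained from \(\gat_{2}\) by adjoining one equational axiom \(\axiom_{\cst}\) for each symbol \((\cst : \ctx \To \kind) \in \gat_{1}\). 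The number of such symbols is bounded by the cardinality of \(\sum_{\seq{\var}} \set_{\gat_{1}}(\seq{\var})\), which is finite because \(\gat_{1}\) is finite; adjoining finitely many axioms to the finite theory \(\gat_{2}\) yields again a finite theory.

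Having verified closure under these constructions, the full subcategory \(\mathcal{T} \subset \DTT_{\tth}\) of finite \(\tth\)-theories has finite colimits, computed exactly as in \(\DTT_{\tth}\). Since \(\thdtt_{\tth}\) is by definition the opposite category \(\mathcal{T}^{\mathrm{op}}\), it follows that \(\thdtt_{\tth}\) has finite limits, as claimed.

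The statement is essentially a bookkeeping corollary of \cref{prop:dtt-cocomplete}, so there is no genuine obstacle. The only point that requires a moment's care is the coequalizer, where one must observe that the number of adjoined axioms is controlled by the finite number of symbols of the \emph{domain} theory \(\gat_{1}\), rather than of \(\gat_{2}\), so that finiteness is indeed preserved.
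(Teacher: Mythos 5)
Your proposal is correct and follows exactly the route the paper intends: the corollary is stated without proof as an immediate consequence of \cref{prop:dtt-cocomplete}, and your argument is precisely the bookkeeping that justifies it --- the initial (empty) theory is finite, disjoint unions of finite theories are finite, and the coequalizer construction adjoins one axiom per symbol of the finite domain theory \(\gat_{1}\), hence preserves finiteness. Your closing observation, that the number of adjoined axioms is controlled by \(\gat_{1}\) rather than \(\gat_{2}\), is indeed the one point worth making explicit.
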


Pushouts of inclusions of \(\tth\)-theories have simpler descriptions.

\begin{proposition}
  \label{prop:theories-pushouts}
  Let \(\gat_{2}\) be a \(\tth\)-theory, \(\gat_{1} \subset \gat_{2}\)
  a subtheory, and \(\itpr : \gat_{1} \to \gat_{1}'\) an
  interpretation. Let \(\gat_{2}'\) be the pushout of \(\gat_{2}\)
  along \(\itpr\).
  \[
    \begin{tikzcd}
      \gat_{1}
      \arrow[r, "\itpr"]
      \arrow[d, hook] &
      \gat_{1}'
      \arrow[d] \\
      \gat_{2}
      \arrow[r] &
      \gat_{2}'
    \end{tikzcd}
  \]
  Then \(\gat_{2}'\) is isomorphic to the \(\tth\)-theory consisting
  of the following data:
  \begin{itemize}
  \item \(\cst : \ctx \To \kind\) for each \((\cst : \ctx \To \kind)
    \in \gat_{1}'\);
  \item
    \(\widetilde{\itpr}(\cst) : \widetilde{\itpr}^{\Kl}(\ctx) \To
    \widetilde{\itpr}^{\Kl}(\kind)\) for each
    \((\cst : \ctx \To \kind) \in \gat_{2} - \gat_{1}\), where
    \(\widetilde{\itpr} : \set_{\gat_{2}} \to
    \thexp_{\tth}(\set_{\gat_{1}'} + (\set_{\gat_{2}} -
    \set_{\gat_{1}}))\) is defined by
    \[
      \widetilde{\itpr}(\cst) = \left\{
        \begin{array}{ll}
          \itpr(\cst) & (\cst \in \gat_{1}) \\
          \cst & (\cst \in \gat_{2} - \gat_{1}).
        \end{array}
      \right.
    \]
  \end{itemize}
\end{proposition}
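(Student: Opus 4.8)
The plan is to realize the explicit $\tth$-theory described in the statement---call it $\gat_2''$---as a genuine object of $\DTT_{\tth}$, exhibit it as the apex of a cocone under the span $\gat_1' \xleftarrow{\itpr} \gat_1 \hookrightarrow \gat_2$, and verify directly that this cocone satisfies the universal property of the pushout. Since the pushout $\gat_2'$ exists by \cref{prop:dtt-cocomplete}, matching universal properties yields the asserted isomorphism $\gat_2'' \cong \gat_2'$. On symbols and axioms we set $\set_{\gat_2''} = \set_{\gat_1'} + (\set_{\gat_2} - \set_{\gat_1})$, equipped with the ordinal sum of the relation on $\gat_1'$ and the restriction to $\gat_2 - \gat_1$ of the relation on $\gat_2$, with every element of $\gat_1'$ placed below every element of $\gat_2 - \gat_1$; by \cref{rem:on-theories} the precise choice of relation is immaterial up to isomorphism. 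Throughout I write $\itpr_1 \sim \itpr_2$ when two interpretations are equivalent.

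The first task is to check that $\gat_2''$ is well-formed and that $\widetilde{\itpr}$ is an interpretation $\gat_2 \to \gat_2''$; I would establish both by a single well-founded induction. For $\cst \in \gat_1'$ the symbols of $\gat_2''$ below $\cst$ are exactly those below $\cst$ in $\gat_1'$, so the well-formedness condition is inherited verbatim. For $\cst \in \gat_2 - \gat_1$, the induction hypothesis makes the fragment of $\gat_2''$ strictly below $\cst$ a well-formed $\tth$-theory, and $\widetilde{\itpr}$ restricts to an interpretation from the $\gat_2$-downset $\downset{\cst}$ into that fragment (on $\gat_1$-symbols this uses that $\itpr$ is an interpretation, on $\gat_2 - \gat_1$-symbols it is the hypothesis). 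Then \cref{axiom:11} transports the well-formedness judgment witnessing $(\cst : \ctx \To \kind) \in \gat_2$ along $\widetilde{\itpr}$ to $\widetilde{\itpr}^{\Kl}(\ctx) \vdash \widetilde{\itpr}^{\Kl}(\judg(\cst))$, which is precisely the well-formedness condition for the symbol $\cst$ of $\gat_2''$; the same step shows $\widetilde{\itpr}$ is an interpretation.

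There is an evident interpretation $j : \gat_1' \to \gat_2''$ given on symbols by the coproduct inclusion, and $\widetilde{\itpr} : \gat_2 \to \gat_2''$ from the previous step. These form a cocone because, for $\cst \in \gat_1$, we have $j^{\Kl}(\itpr(\cst)) = \itpr(\cst) = \widetilde{\itpr}(\cst)$ on the nose, so the square commutes. For the universal property, let $f : \gat_1' \to \gat_3$ and $g : \gat_2 \to \gat_3$ be interpretations with $f\itpr \sim g|_{\gat_1}$, and define a raw map $h : \set_{\gat_2''} \to \thexp_{\tth}(\set_{\gat_3})$ by $h = f$ on $\set_{\gat_1'}$ and $h = g$ on $\set_{\gat_2} - \set_{\gat_1}$. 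Computing the Kleisli composite shows $h \circ \widetilde{\itpr}$ agrees, symbol by symbol, with $f\itpr$ on $\gat_1$ and with $g$ on $\gat_2 - \gat_1$; since $f\itpr \sim g|_{\gat_1}$, the map $h\widetilde{\itpr}$ is pointwise derivably equal to the interpretation $g$, and hence---by the congruence rules of \cref{axiom:9} together with \cref{axiom:12}---is itself an interpretation equivalent to $g$. This is exactly what is needed to verify, symbol by symbol, that $h$ respects the boundaries of $\gat_2''$: for $\cst \in \gat_1'$ it is immediate from $f$ being an interpretation, and for $\cst \in \gat_2 - \gat_1$ it is the boundary condition for $g$ transported through $\widetilde{\itpr}$. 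Thus $h$ is an interpretation with $h j \sim f$ and $h\widetilde{\itpr} \sim g$. Uniqueness follows from \cref{rem:on-theories}: any mediating interpretation is determined, up to equivalence, on the two summands of $\set_{\gat_2''}$ by its composites with $j$ and $\widetilde{\itpr}$.

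I expect the main obstacle to be the bookkeeping in the well-founded induction of the second paragraph, and in particular the claim used in the third paragraph that a raw map pointwise derivably equal to an interpretation is again an interpretation equivalent to it. This fact is what legitimizes $h\widetilde{\itpr}$, and then $h$, and it is the one place where \cref{axiom:12} and the congruence rules do genuine work; everything else is a matter of tracking downsets and coproduct inclusions.
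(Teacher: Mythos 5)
Your proposal is correct and follows essentially the same route as the paper: the paper's proof also consists of a well-founded induction on \(\cst \in \gat_{2}\) showing that \(\widetilde{\itpr}\) restricts to an interpretation on downsets (so that \cref{axiom:11} yields the well-formedness of the new boundaries), and it dismisses the verification of the pushout universal property as straightforward. Your additional spelling-out of that universal property---including the rewriting-along-derivable-equalities step via \cref{axiom:9} and \cref{axiom:12}, which mirrors the coequalizer argument in \cref{prop:dtt-cocomplete}---is accurate detail that the paper leaves implicit.
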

\begin{proof}
  This is straightforward, but in order to verify that \(\gat_{2}'\)
  is indeed a \(\tth\)-theory, we use well-founded induction on
  \(\cst \in \gat_{2}\). We claim that, for any \(\cst \in \gat_{2}\),
  if \(\ctx \vdash_{\downset{\cst}} \judg\) then
  \(\widetilde{\itpr}^{\Kl}(\ctx)
  \vdash_{\downset{\widetilde{\itpr}(\cst)}}
  \widetilde{\itpr}^{\Kl}(\judg)\), and then in particular the
  well-formedness condition for \(\widetilde{\itpr}(\cst)\) for
  \(\cst \in \gat_{2} - \gat_{1}\) is satisfied. Suppose that the
  claim is satisfied for any \(\cst' \in \gat_{2}\) below
  \(\cst\). Then \(\downset{\widetilde{\itpr}(\cst)}\) is a
  \(\tth\)-theory and \(\widetilde{\itpr}\) determines an
  interpretation
  \(\downset{\cst} \to \downset{\widetilde{\itpr}(\cst)}\), and thus
  \(\ctx \vdash_{\downset{\cst}} \judg\) implies
  \(\widetilde{\itpr}^{\Kl}(\ctx)
  \vdash_{\downset{\widetilde{\itpr}(\cst)}}
  \widetilde{\itpr}^{\Kl}(\judg)\).
\end{proof}

\begin{remark}
  \label{rem:locally-pres}
  In the following, we assume that the reader is familiar with the
  theory of locally presentable categories \parencite[Chapter
  1]{adamek1994locally}. A wide variety of type theories including
  \(\tth_{0}\) are inductively defined by finitary grammar and
  inference rules with finitely many premises. For such a type theory
  \(\tth\), we may assume that, for any \(\tth\)-theory \(\gat\) and
  for any \(\cst \in \gat\), the set \(\downset{\cst}\) is finite:
  otherwise replace the well-founded relation on symbols and axioms by
  \(\cst' < \cst\) defined as \(\cst'\) appears in the derivation tree
  for the well-formedness condition for \(\cst\) (see also
  \cref{exm:t0-theory}); by assumption the derivation tree is
  finite. This implies that every \(\tth\)-theory is written as a
  directed colimit of finite \(\tth\)-theories. In other words, the
  category \(\DTT_{\tth}\) is locally finitely presentable, and the
  finitely presentable objects of \(\DTT_{\tth}\) are the finite
  \(\tth\)-theories. In that case, we call \(\thdtt_{\tth}\) the
  \emph{essentially algebraic theory of \(\tth\)-theories} or the
  \emph{essentially algebraic theory for \(\tth\)} because
  \(\tth\)-theories are equivalent to ``models of \(\thdtt_{\tth}\)'',
  that is, cartesian functors \(\thdtt_{\tth} \to \Set\). Precisely,
  the functor
  \begin{equation}
    \label{eq:31}
    \DTT_{\tth} \ni \gat \mapsto \DTT_{\tth}(\argu, \gat) \in
    \Cart(\thdtt_{\tth}, \Set)
  \end{equation}
  is an equivalence.
\end{remark}

\subsection{Exponentiable Arrows Associated to Type Theories}
\label{sec:expon-arrows-assoc}

In this subsection we show that, for every type theory \(\tth\), the
cartesian category \(\thdtt_{\tth}\) has a certain exponentiable arrow
(\cref{cor:gat-exp}). We fix a type theory \(\tth\) and call a
\(\tth\)-theory simply a theory. We also omit the subscript
\({}_{\tth}\) of \(\thdtt_{\tth}\).

\begin{construction}
  \label{def:dtt-family}
  We define \(\Ty_{\nat}\) to be the theory consisting of the
  following symbols
  \begin{align*}
    \ty_{0}
    &: () \To \s{Type} \\
    \ty_{1}
    &: (\var_{0} : \ty_{0}) \To \s{Type} \\
    \ty_{2}
    &: (\var_{0} : \ty_{0}, \var_{1} : \ty_{1}) \To \s{Type} \\
    \vdots \\
    \ty_{\nat}
    &: (\var_{0} : \ty_{0}, \dots, \var_{\nat-1} : \ty_{\nat-1}) \To
      \s{Type}
  \end{align*}
  and \(\El_{\nat}\) to be the theory consisting of the symbols
  \(\ty_{0}, \dots, \ty_{\nat}\) of \(\Ty_{\nat}\) and a symbol
  \[
    \el_{\nat} : (\var_{0} : \ty_{0}, \dots, \var_{\nat-1} :
    \ty_{\nat-1}) \To \ty_{\nat}.
  \]
  We denote by \(\typeof_{\nat} : \El_{\nat} \to \Ty_{\nat}\) and
  \(\ctxof_{\nat} : \Ty_{\nat+1} \to \Ty_{\nat}\) the arrows in
  \(\thdtt\) represented by the obvious inclusions
  \(\Ty_{\nat} \to \El_{\nat}\) and \(\Ty_{\nat} \to \Ty_{\nat+1}\)
  respectively.
\end{construction}

We will show that the arrow \(\typeof_{0} : \El_{0} \to \Ty_{0}\) in
\(\thdtt\) is exponentiable (\cref{cor:gat-exp}). Although it is
possible to construct the pushforward \((\typeof_{0})_{*}\) directly,
we use \cref{thm:family-repr} to emphasize the connection between the
exponentiability of \(\typeof_{0}\) and the structural rules of
weakening, projection and substitution. The idea is that the stability
under hypothesizing defines an endofunctor and that the weakening,
projection and substitution rules correspond to the data \(\wk\),
\(\proj\) and \(\subst\), respectively, of a polynomial functor.

\begin{construction}
  We define a functor \(\poly : \thdtt \to \thdtt\) as follows:
  \begin{itemize}
  \item we reserve a symbol \(\ty_{0}\) and define
    \(\poly\gat = \gat^{\ty_{0}}\), that is, \(\poly\) sends a finite
    theory
    \begin{align*}
      \cst_{1}
      &: \ctx_{1} \To \kind_{1} \\
      \vdots \\
      \cst_{\nat}
      &: \ctx_{\nat} \To \kind_{\nat}
    \end{align*}
    to the finite theory
    \begin{align*}
      \ty_{0}
      &: () \To \s{Type} \\
      \hypsz{\var_{0}}\cst_{1}
      &: (\var_{0} : \ty_{0}, \hypsz{\var_{0}}\ctx_{1}) \To \hypsz{\var_{0}}\kind_{1} \\
      \vdots \\
      \hypsz{\var_{0}}\cst_{\nat}
      &: (\var_{0} : \ty_{0}, \hypsz{\var_{0}}\ctx_{\nat}) \To \hypsz{\var_{0}}\kind_{\nat}
    \end{align*}
    which is indeed a theory by the stability under hypothesizing;
  \item \(\poly\) sends an interpretation
    \(\itpr : \gat_{1} \to \gat_{2}\) to the interpretation
    \(\poly\itpr\) defined to be identity on \(\{\ty_{0}\}\) and
    \((\var_{0})_{!}\itpr\) on \((\var_{0})_{!}\set_{\gat_{1}}\). By
    the commutativity of \cref{eq:29}, we have
    \((\poly\itpr)^{\Kl}(\hypsz{\var_{0}}\expr) =
    \hypsz{\var_{0}}\itpr^{\Kl}(\expr)\) for any
    \(\expr \in \thexp(\set_{\gat_{1}}, \seq{\var})\), from which it
    follows that \(\poly\itpr\) is indeed an interpretation: for any
    \((\cst : \ctx \To \kind) \in \gat_{1}\), we have
    \(\var_{0} : \ty_{0}, \hypsz{\var_{0}}\itpr^{\Kl}(\ctx) \vdash
    \hypsz{\var_{0}}\itpr(\cst) : \hypsz{\var_{0}}\itpr^{\Kl}(\kind)\)
    by the stability under hypothesizing, and then rewrite it along
    the equation
    \(\hypsz{\var_{0}}\itpr^{\Kl}(\expr) =
    (\poly\itpr)^{\Kl}(\hypsz{\var_{0}}\expr)\).
  \end{itemize}
  The obvious inclusion \(\Ty_{0} \to \poly\gat\) defines a natural
  transformation \(\leg_{\gat} : \poly\gat \to \Ty_{0}\). Clearly
  \(\poly\) sends finite limits in \(\thdtt\) to finite limits in
  \(\thdtt/\Ty_{0}\), and thus \(\poly : \thdtt \to \thdtt\) preserves
  pullbacks.
\end{construction}

Intuitively, \(\poly\gat\) is the theory obtained from \(\gat\) by
\begin{enumerate}
\item adjoining a fresh type symbol \(\ty_{0}\);
\item modifying the arity \(\seq{\var}\) of each element \(\cst \in
  \gat\) to \((\var_{0}, \seq{\var})\) where \(\var_{0}\) is a
  variable of type \(\ty_{0}\).
\end{enumerate}
Then elements from \(\gat\) become indexed over \(\ty_{0}\) in
\(\poly\gat\), so the theory \(\poly\gat\) is considered the
\emph{theory of families of \(\gat\)}.

\begin{example}
  Let \(\gat\) be the theory of categories
  (\cref{exm:gat-of-cat}). Then \(\poly\gat\) is isomorphic to the
  following theory.
  \begin{align*}
    \ty_{0}
    &: () \To \s{Type} \\
    \thcatOb
    &: (\var_{0} : \ty_{0}) \To \s{Type} \\
    \thcatHom
    &: (\var_{0} : \ty_{0}, \var_{1} : \thcatOb(\var_{0}), \var_{2} :
      \thcatOb(\var_{0})) \To \s{Type} \\
    \thcatid
    &: (\var_{0} : \ty_{0}, \var : \thcatOb(\var_{0})) \To
      \thcatHom(\var_{0}, \var, \var) \\
    \thcatcomp
    & : (\var_{0} : \ty_{0}, \var_{1} : \thcatOb(\var_{0}),
      \var_{2} : \thcatOb(\var_{0}), \var_{3} :
      \thcatOb(\var_{0}), \\
    &\quad \varI_{1} :
      \thcatHom(\var_{0}, \var_{1}, \var_{2}), \varI_{2} :
      \thcatHom(\var_{0}, \var_{2}, \var_{3})) \To
      \thcatHom(\var_{0}, \var_{1}, \var_{3})
    \\
    \_
    &: (\var_{0}, : \ty_{0}, \var_{1} : \thcatOb(\var_{0}),
      \var_{2} : \thcatOb(\var_{0}), \varI :
      \thcatHom(\var_{0}, \var_{1}, \var_{2})) \\
    &\quad \To \thcatcomp(\var_{0}, \var_{1}, \var_{1},
      \var_{2}, \thcatid(\var_{0}, \var_{1}), \varI) = \varI
    \\
    \_
    &: (\var_{0} : \ty_{0}, \var_{1} : \thcatOb(\var_{0}),
      \var_{2} : \thcatOb(\var_{0}), \varI :
      \thcatHom(\var_{0}, \var_{1}, \var_{2})) \\
    &\quad \To \thcatcomp(\var_{0}, \var_{1}, \var_{2},
      \var_{2}, \varI, \thcatid(\var_{0}, \var_{2})) = \varI
    \\
    \_
    &: (\var_{0} : \ty_{0}, \var_{1} : \thcatOb(\var_{0}),
      \var_{2} : \thcatOb(\var_{0}), \var_{3} :
      \thcatOb(\var_{0}), \var_{4} :
      \thcatOb(\var_{0}), \\
    &\quad \varI_{1} :
      \thcatHom(\var_{0}, \var_{1}, \var_{2}), \varI_{2} :
      \thcatHom(\var_{0}, \var_{2}, \var_{3}), \varI_{3} :
      \thcatHom(\var_{0}, \var_{3}, \var_{4})) \To \\
    &\quad \thcatcomp(\var_{0}, \var_{1}, \var_{3},
      \var_{4}, \thcatcomp(\var_{0}, \var_{1}, \var_{2},
      \var_{3}, \varI_{1}, \varI_{2}), \varI_{3}) \\
    &\quad =
      \thcatcomp(\var_{0}, \var_{1}, \var_{2}, \var_{4},
      \varI_{1}, \thcatcomp(\var_{0}, \var_{2}, \var_{3},
      \var_{4}, \varI_{2}, \varI_{3}))
  \end{align*}
  One can think of \((\thcatOb, \thcatHom, \thcatid, \thcatcomp)\) as
  a family of categories indexed over \(\ty_{0}\), so \(\poly\gat\) is
  the theory of families of categories.
\end{example}

\begin{example}
  \label{exm:family-theory}
  We have isomorphisms
  \begin{gather*}
    \poly\Ty_{\nat} \cong \Ty_{\nat+1} \\
    \poly\El_{\nat} \cong \El_{\nat+1}.
  \end{gather*}
\end{example}

We will turn the functor \(\poly : \thdtt \to \thdtt\) into a
polynomial functor for \(\typeof_{0}\). We extensively use the
presentation of a pullback in \(\thdtt\) (pushout in \(\DTT\)) given
in \cref{prop:theories-pushouts}. For example, for an arrow
\(\itpr : \gat \to \Ty_{0}\), which is an interpretation
\(\Ty_{0} \to \gat\), the pullback of \(\El_{0}\) along \(\itpr\) in
\(\thdtt\) is isomorphic to the following theory.
\begin{align*}
  \cst
  &: \ctx \To \kind
  & ((\cst : \ctx \To \kind) \in \gat) \\
  \el_{0}
  &: () \To \itpr(\ty_{0})
\end{align*}

\begin{construction}
  \label{def:dtt-wk}
  For a finite theory \(\gat\), we define an arrow
  \(\wk_{\gat} : \gat \times \Ty_{0} \to \poly\gat\) over \(\Ty_{0}\)
  to be the interpretation \(\poly\gat \to \gat \times \Ty_{0}\)
  defined by
  \begin{align*}
    \ty_{0}
    &\mapsto ({} \vdash \ty_{0} : \s{Type}) \\
    (\hypsz{\var_{0}}\cst : (\var_{0} : \ty_{0}, \hypsz{\var_{0}}\ctx) \To
    \hypsz{\var_{0}}\kind)
    &\mapsto (\var_{0} : \ty_{0}, \ctx \vdash \cst : \kind)
    & (\cst \in \gat).
  \end{align*}
  This interpretation is well-defined because of the weakening
  rule. Clearly \(\wk\) is natural in \(\gat \in \thdtt\).
\end{construction}

\begin{construction}
  \label{def:dtt-proj}
  We define an arrow
  \(\proj : \Ty_{0} \to \poly\El_{0} \cong \El_{1}\) over \(\Ty_{0}\)
  to be the interpretation \(\El_{1} \to \Ty_{0}\) defined by
  \begin{align*}
    \ty_{0}
    &\mapsto ({} \vdash \ty_{0} : \s{Type}) \\
    \ty_{1}
    &\mapsto (\var_{0} : \ty_{0} \vdash \ty_{0} : \s{Type}) \\
    \el_{1}
    &\mapsto (\var_{0} : \ty_{0} \vdash \var_{0} : \ty_{0}).
  \end{align*}
  This interpretation is well-defined because of the projection rule.
\end{construction}

\begin{construction}
  \label{def:dtt-subst}
  For a finite theory \(\gat\), we define an arrow
  \(\subst_{\gat} : \poly\gat \times_{\Ty_{0}} \El_{0} \to \gat\) to
  be the interpretation \(\gat \to \poly\gat \times_{\Ty_{0}}
  \El_{0}\) defined by
  \begin{align*}
    (\cst : \ctx \To \kind)
    &\mapsto ((\hypsz{\var_{0}}\ctx)[\el_{0}/\var_{0}]
      \vdash (\hypsz{\var_{0}}\cst)[\el_{0}/\var_{0}] :
      (\hypsz{\var_{0}}\kind)[\el_{0}/\var_{0}])
    & (\cst \in \gat).
  \end{align*}
  This interpretation is well-defined because of the substitution
  rule. Clearly \(\subst\) is natural in \(\gat \in \thdtt\).
\end{construction}

\begin{proposition}
  \label{prop:dtt-family-structure}
  \((\poly, \leg, \wk, \proj, \subst)\) defined above is a
  polynomial functor for \(\typeof_{0}\).
\end{proposition}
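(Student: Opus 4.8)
The plan is to verify the four axioms \cref{axiom:1,axiom:2,axiom:3,axiom:4} of \cref{def:family-structure} directly. The functor $\poly$ already preserves pullbacks, $\leg$, $\wk$ and $\subst$ are already natural, and $\proj$ is an arrow over $\Ty_0$, so only the axioms remain. Each axiom is an equation between morphisms of $\thdtt$, that is, between equivalence classes of interpretations in the opposite direction. By \cref{rem:on-theories}, two such morphisms coincide as soon as their underlying interpretations agree, up to derivable equality, on the symbols of the source theory. So throughout I would unfold each side into an interpretation, compute its value on each symbol using the explicit pushout presentation of \cref{prop:theories-pushouts}, and check that the values match. The guiding slogan is that $\wk$, $\proj$, $\subst$ are the weakening, projection and substitution rules, and each axiom records a standard identity between them.

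For \cref{axiom:1} I would compute both composites as interpretations $\poly\Ty_0 \cong \Ty_1 \to \Ty_0$: by \cref{def:dtt-proj} the left-hand side $\poly\typeof_0 \circ \proj$ sends $\ty_0 \mapsto \ty_0$ and $\ty_1 \mapsto (\var_0 : \ty_0 \vdash \ty_0)$, and by \cref{def:dtt-wk} the right-hand side $\wk_{\Ty_0}\circ\diagonal_{\Ty_0}$ yields the same assignment, so the two agree on the nose. \Cref{axiom:2,axiom:3} involve the total theory $\El_0$ together with the canonical isomorphisms $\Ty_0 \times_{\Ty_0}\El_0 \cong \El_0$ and $(\gat \times \Ty_0)\times_{\Ty_0}\El_0 \cong \gat \times \El_0$. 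For \cref{axiom:2} the content is that $\subst_{\El_0}$ sets $\var_0 := \el_0$, sending the generic term $\var_0$ of $\proj$ to $\el_0$, which is exactly the isomorphism $\Ty_0 \times_{\Ty_0}\El_0 \cong \El_0$. For \cref{axiom:3} the content is that the variable $\var_0$ introduced by weakening does not occur in a weakened symbol, so substituting $\el_0$ for it leaves each $\cst \in \gat$ unchanged and the composite is the first projection. Both reduce to one-line checks on the symbols $\cst \in \gat$.

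The main obstacle is \cref{axiom:4}, the only axiom involving a double application of $\poly$, hence two nested uses of the hypothesizing operation $\hypsz{\var_0}$ and the pullback-preservation isomorphism $\poly\poly\gat \times_{\poly\Ty_0}\poly\El_0 \cong \poly(\poly\gat \times_{\Ty_0}\El_0)$. Here I would first describe the pairing $(\wk_{\poly\gat}(\id_{\poly\gat}, \leg_{\gat}), \proj\leg_{\gat})$ explicitly as an interpretation $\poly(\poly\gat\times_{\Ty_0}\El_0) \to \poly\gat$, whose first component weakens $\poly\gat$ along the outer fresh type and whose second component names the generic term. Then $\poly\subst_{\gat}$, being $\poly$ applied to $\subst_{\gat}$, re-indexes the substitution $[\el_0/\var_0]$ by the outer variable. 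Composing, the net operation on a symbol $\hypsz{\var_0}\cst$ of $\poly\gat$ is to apply $\hypsz{\var_0}$ a second time and then substitute the outer generic term back for the freshly introduced variable; the crux is the bookkeeping identity that this outer-hypothesize-then-substitute-back recovers $\hypsz{\var_0}\cst$. This is the type-theoretic triangle law relating weakening and substitution, and it matches the observation in the proof of \cref{lem:polynomial-1} that \cref{axiom:4} is the triangle identity \labelcref{eq:15}. The real care is in keeping the two fresh type variables distinct and in checking that the isomorphism of \cref{prop:theories-pushouts} identifies the two presentations of $\poly(\poly\gat\times_{\Ty_0}\El_0)$; once that is set up, the equation holds on each symbol up to the congruence rules, which by \cref{rem:on-theories} is all that is needed.
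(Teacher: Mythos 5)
Your proposal is correct and follows essentially the same route as the paper: both verify \cref{axiom:1,axiom:2,axiom:3,axiom:4} by unfolding each side into an interpretation via the pushout presentation of \cref{prop:theories-pushouts}, comparing values on symbols up to derivable equality, and resting each axiom on the same type-theoretic identities (weakening for \cref{axiom:1,axiom:3}, \(\var_{0}[\el_{0}/\var_{0}] = \el_{0}\) for \cref{axiom:2}, and the invertibility of variable renaming \(\expr[\var_{1}/\var_{0}][\var_{0}/\var_{1}] = \expr\) for \cref{axiom:4}, which is exactly your ``outer-hypothesize-then-substitute-back'' bookkeeping). The paper's proof of \cref{axiom:4} writes out the two interpretations on \(\poly(\poly\gat \times_{\Ty_{0}} \El_{0})\) explicitly, which is the detail your plan defers but correctly identifies as the crux.
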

\begin{proof}
  It remains to check \cref{axiom:1,axiom:2,axiom:3,axiom:4} of
  \cref{def:family-structure}. Intuitively, these axioms express the
  interaction of the weakening, projection and substitution
  operations.
  \begin{itemize}
  \item \Cref{axiom:1} expresses that the type of the projection
    \(\var_{0} : \ty_{0} \vdash \var_{0} : \ty_{0}\) is the weakening
    of \(\ty_{0}\) by \((\var_{0} : \ty_{0})\).
  \item \Cref{axiom:2} expresses that the substitution of \(\el_{0}\) for
    \(\var_{0}\) in \(\var_{0}\) is \(\el_{0}\):
    \({\var_{0}}[\el_{0}/\var_{0}] = \el_{0}\).
  \item \Cref{axiom:3} expresses that substitution in weakening is the
    identity: \({\expr}[\el_{0}/\var_{0}] = \expr\) for any expression
    \(\expr\) that does not contain \(\var_{0}\) as a free variable.
  \item \Cref{axiom:4} expresses that renaming of a variable is
    invertible:
    \({\expr}[\var_{1}/\var_{0}][\var_{0}/\var_{1}] = \expr\) for any
    expression \(\expr\) that does not contain \(\var_{1}\) as a free
    variable.
  \end{itemize}

  \paragraph{\Cref{axiom:1}} The arrows
  \(\poly\typeof_{0} \circ \proj : \Ty_{0} \to \poly\Ty_{0} \cong
  \Ty_{1}\) and
  \(\wk_{\Ty_{0}} \circ (\id_{\Ty_{0}}, \id_{\Ty_{0}}) : \Ty_{0} \to
  \poly\Ty_{0} \cong \Ty_{1}\) are both represented by the
  interpretation \(\Ty_{1} \to \Ty_{0}\) defined by
  \begin{align*}
    \ty_{0}
    &\mapsto ({} \vdash \ty_{0} : \s{Type}) \\
    \ty_{1}
    &\mapsto (\var_{0} : \ty_{0} \vdash \ty_{0} : \s{Type}).
  \end{align*}

  \paragraph{\Cref{axiom:2}}
  \(\poly\El_{0} \times_{\Ty_{0}} \El_{0}\) is isomorphic to the
  theory
  \begin{align*}
    \ty_{0}
    &: () \To \s{Type} \\
    \el_{0}
    &: () \To \ty_{0} \\
    \tyI_{1}
    &: (\var_{0} : \ty_{0}) \To \s{Type} \\
    \elI_{1}
    &: (\var_{0} : \ty_{0}) \To \tyI_{1}(\var_{0}).
  \end{align*}
  Then
  \(\subst_{\El_{0}} : \poly\El_{0} \times_{\Ty_{0}} \El_{0} \to
  \El_{0}\) and
  \(\proj \times_{\Ty_{0}} \El_{0} : \El_{0} \cong \Ty_{0}
  \times_{\Ty_{0}} \El_{0} \to \poly\El_{0} \times_{\Ty_{0}} \El_{0}\)
  are represented by the interpretation \(\El_{0} \to \poly\El_{0}
  \times_{\Ty_{0}} \El_{0}\) defined by
  \begin{align*}
    \ty_{0}
    &\mapsto ({} \vdash \tyI_{1}(\el_{0}) : \s{Type}) \\
    \el_{0}
    &\mapsto ({} \vdash \elI_{1}(\el_{0}) : \tyI_{1}(\el_{0}))
  \end{align*}
  and the interpretation \(\poly\El_{0} \times_{\Ty_{0}} \El_{0} \to
  \El_{0}\) defined by
  \begin{align*}
    \ty_{0}
    &\mapsto ({} \vdash \ty_{0} : \s{Type}) \\
    \el_{0}
    &\mapsto ({} \vdash \el_{0} : \ty_{0}) \\
    \tyI_{1}
    &\mapsto (\var_{0} : \ty_{0} \vdash \ty_{0}) \\
    \elI_{1}
    &\mapsto (\var_{0} : \ty_{0} \vdash \var_{0} : \ty_{0})
  \end{align*}
  respectively. Since \(\var_{0}[\el_{0}/\var_{0}] = \el_{0}\), the
  composite
  \(\subst_{\El_{0}} \circ (\proj \times_{\Ty_{0}} \El_{0})\) is
  represented by the identity interpretation.

  \paragraph{\Cref{axiom:3}} Since
  \({\expr}[\el_{0}/\var_{0}] = \expr\) for any expression that does
  not contain \(\var_{0}\) as a free variable, one can see that the
  composite
  \(\subst_{\gat} \circ (\wk_{\gat} \times_{\Ty_{0}} \El_{0}) : \gat
  \times \El_{0} \to \gat\) is represented by the inclusion
  \(\gat \to \gat \times\El_{0}\).

  \paragraph{\Cref{axiom:4}} Recall that
  \(\poly\gat \times_{\Ty_{0}} \El_{0}\) is isomorphic to the theory
  \begin{align*}
    \ty_{0}
    &: () \To \s{Type} \\
    \el_{0}
    &: () \To \ty_{0} \\
    \hypsz{\var_{0}}\cst
    &: (\var_{0} : \ty_{0}, \hypsz{\var_{0}}\ctx) \To \hypsz{\var_{0}}\kind
    & (\cst \in \gat).
  \end{align*}
  Thus, \(\poly(\poly\gat \times_{\Ty_{0}} \El_{0})\) is isomorphic to
  the theory
  \begin{align*}
    \ty_{0}
    &: () \To \s{Type} \\
    \ty_{1}
    &: (\var_{0} : \ty_{0}) \To \s{Type} \\
    \el_{1}
    &: (\var_{0} : \ty_{0}) \To \ty_{1} \\
    \hypsz{\var_{0}, \var_{1}}\cst
    &: (\var_{0} : \ty_{0}, \var_{1} : \ty_{1},
      \hypsz{\var_{0}, \var_{1}}\ctx) \To \hypsz{\var_{0}, \var_{1}}\kind
    & (\cst \in \gat)
  \end{align*}
  where \(\hypsz{\var_{0}, \var_{1}}\) is constructed from the
  adjunction \((\var_{0})_{!}^{2} \adj (\var_{0}^{*})^{2}\) in the
  same way as \(\hypsz{\var_{0}}\). Then,
  \((\wk_{\poly\gat}(\id_{\poly\gat}, \leg_{\gat}), \proj\leg_{\gat})
  : \poly\gat \to \poly(\poly\gat \times_{\Ty_{0}} \El_{0})\) and
  \(\poly\subst_{\gat} : \poly(\poly\gat \times_{\Ty_{0}} \El_{0}) \to
  \poly\gat\) are represented by the interpretation \(\poly(\poly\gat
  \times_{\Ty_{0}} \El_{0}) \to \poly\gat\) defined by
  \begin{align*}
    \ty_{0}
    &\mapsto ({} \vdash \ty_{0} : \s{Type}) \\
    \ty_{1}
    &\mapsto (\var_{0} : \ty_{0} \vdash \ty_{0} : \s{Type}) \\
    \el_{1}
    &\mapsto (\var_{0} : \ty_{0} \vdash \var_{0} : \ty_{0}) \\
    \hypsz{\var_{0}, \var_{1}}\cst
    &\mapsto (\var_{0} : \ty_{0}, \var_{1} : \ty_{0},
      (\hypsz{\var_{0}}\ctx)[\var_{1}/\var_{0}] \vdash \\
    &\hphantom{\mapsto}\quad (\hypsz{\var_{0}}\cst)[\var_{1}/\var_{0}] :
      (\hypsz{\var_{0}}\kind)[\var_{1}/\var_{0}])
    & (\cst \in \gat)
  \end{align*}
  and the interpretation \(\poly\gat \to \poly(\poly\gat
  \times_{\Ty_{0}} \El_{0})\) defined by
  \begin{align*}
    \ty_{0}
    &\mapsto ({} \vdash \ty_{0} : \s{Type}) \\
    \hypsz{\var_{0}}\cst
    &\mapsto (\var_{0} : \ty_{0},
      (\hypsz{\var_{0}, \var_{1}}\ctx)[\el_{1}(\var_{0})/\var_{1}]
      \vdash \\
    &\hphantom{\mapsto}\quad (\hypsz{\var_{0},
      \var_{1}}\cst)[\el_{1}(\var_{0})/\var_{1}] : (\hypsz{\var_{0},
      \var_{1}}\kind)[\el_{1}(\var_{0})/\var_{1}])
    & (\cst \in \gat)
  \end{align*}
  respectively. Since
  \({\expr}[\var_{1}/\var_{0}][\var_{0}/\var_{1}] = \expr\) for any
  expression \(\expr\) that does not contain \(\var_{1}\) as a free
  variable, the composite
  \(\poly\subst_{\gat} \circ (\wk_{\poly\gat}(\id_{\poly\gat},
  \leg_{\gat}), \proj\leg_{\gat})\) is represented by the identity
  interpretation.
\end{proof}

\begin{corollary}
  \label{cor:gat-exp}
  The arrow \(\typeof_{0} : \El_{0} \to \Ty_{0}\) in
  \(\thdtt\) is exponentiable.
\end{corollary}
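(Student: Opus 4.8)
The plan is to obtain the corollary as an immediate consequence of the two preceding results, with essentially no further work. The substance of the argument has already been carried out in \cref{prop:dtt-family-structure}, which exhibits the concrete tuple \((\poly, \leg, \wk, \proj, \subst)\) as a polynomial functor for \(\typeof_{0}\) in the sense of \cref{def:family-structure}. What remains is only to transport this into a statement about exponentiability.

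Concretely, I would invoke \cref{thm:family-repr} applied to the arrow \(\pr := \typeof_{0} : \El_{0} \to \Ty_{0}\) in the cartesian category \(\cart := \thdtt\). That theorem asserts the equivalence of \emph{(i)} \(\pr\) being exponentiable and \emph{(ii)} the existence of a polynomial functor for \(\pr\). Since \cref{prop:dtt-family-structure} establishes exactly condition \emph{(ii)} for \(\typeof_{0}\), the implication \((ii) \Rightarrow (i)\) of \cref{thm:family-repr} yields that \(\typeof_{0}\) is exponentiable. One could optionally record the further conclusion supplied by \cref{thm:family-repr}, namely that the functor \(\poly\) constructed above is then canonically isomorphic to the associated polynomial functor \(\poly_{\typeof_{0}}\), which identifies the hypothesizing endofunctor with the genuine pushforward construction; but this is not needed for the statement of the corollary.

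I do not anticipate any real obstacle at this stage: the only verification-heavy step was checking \cref{axiom:1,axiom:2,axiom:3,axiom:4} inside \cref{prop:dtt-family-structure}, and that has already been discharged. The single point worth confirming is that the hypotheses of \cref{thm:family-repr} are met---that \(\thdtt\) is cartesian (which holds since finite \(\tth\)-theories are closed under finite colimits) and that \(\poly\) preserves pullbacks (noted at the end of its construction, as \(\poly\) sends finite limits in \(\thdtt\) to finite limits in \(\thdtt/\Ty_{0}\)). With these in hand the corollary follows in one line.
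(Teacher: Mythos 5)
Your proposal is correct and matches the paper's own route exactly: the paper states \cref{cor:gat-exp} as an immediate consequence of \cref{prop:dtt-family-structure} via the equivalence in \cref{thm:family-repr}, which is precisely your one-line argument. Your side remarks (the identification of \(\poly\) with \(\poly_{\typeof_{0}}\), and the hypotheses that \(\thdtt\) is cartesian and \(\poly\) preserves pullbacks) are also consistent with what the paper establishes before stating the corollary.
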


\begin{corollary}
  \(\Ty_{\nat} \cong \poly_{\typeof_{0}}^{\nat}\Ty_{0}\) and
  \(\El_{\nat} \cong \poly_{\typeof_{0}}^{\nat}\El_{0}\).
\end{corollary}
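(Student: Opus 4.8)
The plan is to reduce the statement to a straightforward simultaneous induction on \(\nat\), fed by the isomorphisms of \cref{exm:family-theory} together with the identification of \(\poly\) with \(\poly_{\typeof_{0}}\).

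First I would observe that \cref{prop:dtt-family-structure} exhibits \((\poly, \leg, \wk, \proj, \subst)\) as a polynomial functor for \(\typeof_{0}\), so by the uniqueness clause of \cref{thm:family-repr} there is an isomorphism \(\poly \cong \poly_{\typeof_{0}}\) of endofunctors on \(\thdtt\). Whiskering this natural isomorphism with itself yields \(\poly^{\nat} \cong \poly_{\typeof_{0}}^{\nat}\) for every \(\nat\). Hence it suffices to prove the two isomorphisms with \(\poly_{\typeof_{0}}\) replaced by \(\poly\), namely \(\Ty_{\nat} \cong \poly^{\nat}\Ty_{0}\) and \(\El_{\nat} \cong \poly^{\nat}\El_{0}\).

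These I would prove simultaneously by induction on \(\nat\). The base case \(\nat = 0\) is immediate, since \(\poly^{0}\) is the identity functor and both sides are literally \(\Ty_{0}\) and \(\El_{0}\). For the inductive step, assuming \(\Ty_{\nat} \cong \poly^{\nat}\Ty_{0}\), I would apply the functor \(\poly\), which preserves isomorphisms, to obtain \(\poly\Ty_{\nat} \cong \poly^{\nat+1}\Ty_{0}\), and then combine this with the isomorphism \(\poly\Ty_{\nat} \cong \Ty_{\nat+1}\) from \cref{exm:family-theory} to conclude \(\Ty_{\nat+1} \cong \poly^{\nat+1}\Ty_{0}\). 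The same argument with \(\El\) in place of \(\Ty\), using \(\poly\El_{\nat} \cong \El_{\nat+1}\), handles the term case, and the two inductions proceed in lockstep.

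The corollary is essentially formal once \cref{exm:family-theory} is in hand, so I do not expect a genuine obstacle at this level; the real content lives in the isomorphisms \(\poly\Ty_{\nat} \cong \Ty_{\nat+1}\) and \(\poly\El_{\nat} \cong \El_{\nat+1}\), which amount to matching the reindexing performed by \(\gat \mapsto \gat^{\ty_{0}}\) against the explicit list of symbols defining \(\Ty_{\nat+1}\) and \(\El_{\nat+1}\), up to the harmless renaming of \(\ty_{0}, \dots, \ty_{\nat}\) to \(\ty_{1}, \dots, \ty_{\nat+1}\) that accompanies adjoining the fresh type \(\ty_{0}\). Since this verification has been discharged earlier, the only care needed here is to invoke functoriality of \(\poly\) correctly and to keep the \(\Ty\)- and \(\El\)-inductions aligned.
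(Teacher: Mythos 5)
Your proposal is correct and matches the paper's approach: the paper's entire proof is ``By \cref{exm:family-theory}'', and your argument---identifying \(\poly \cong \poly_{\typeof_{0}}\) via the uniqueness clause of \cref{thm:family-repr} (using \cref{prop:dtt-family-structure}) and then inducting on \(\nat\) with the isomorphisms \(\poly\Ty_{\nat} \cong \Ty_{\nat+1}\) and \(\poly\El_{\nat} \cong \El_{\nat+1}\)---is precisely the expansion of that one-line proof. No gaps; you have simply made explicit the formal steps the paper leaves implicit.
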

\begin{proof}
  By \cref{exm:family-theory}.
\end{proof}

\section{The Universal Exponentiable Arrow}
\label{sec:univ-expon-arrow}

In this section we restrict our attention to the type theory
\(\tth_{0}\) without any type constructors (\cref{exm:tth-0}). We can
identify the \(\tth_{0}\)-theories as the generalized algebraic
theories (\cref{exm:t0-theory}), so we write \(\GAT\) for the category
\(\DTT_{\tth_{0}}\). Then \(\thgat := \thdtt_{\tth_{0}}\) is the
cartesian category such that \(\GAT \simeq \Cart(\thgat, \Set)\)
(\cref{rem:locally-pres}). By \cref{cor:gat-exp} the arrow
\(\typeof_{0} : \El_{0} \to \Ty_{0}\) in the category \(\thgat\) is
exponentiable, but this time we can say more: \(\typeof_{0}\) is the
\emph{universal exponentiable arrow} in the following sense.
\begin{theorem}
  \label{thm:gat-ump}
  For any cartesian category \(\cart\) and exponentiable arrow
  \(\typeof : \El \to \Ty\) in \(\cart\), there exists a unique, up to
  unique isomorphism, cartesian functor \(\functor : \thgat \to \cart\)
  such that \(\typeof \cong \functor\typeof_{0}\) and \(\functor\) sends
  pushforwards along \(\typeof_{0}\) to those along \(\typeof\).
\end{theorem}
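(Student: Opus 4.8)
The plan is to reduce the entire statement to the dictionary between exponentiable arrows and polynomial-functor structure set up in \cref{thm:family-repr} and \cref{prop:preserve-poly}, and then to exploit the fact that every finite generalized algebraic theory is canonically a finite limit, in \(\thgat\), of the basic objects \(\Ty_{\nat}\) and \(\El_{\nat}\). By \cref{exm:family-theory} and \cref{thm:family-repr} these basic objects are \(\Ty_{\nat} \cong \poly_{\typeof_0}^{\nat}\Ty_0\) and \(\El_{\nat} \cong \poly_{\typeof_0}^{\nat}\El_0\), and the functor \(\poly\) of \cref{prop:dtt-family-structure} is isomorphic to the associated polynomial functor \(\poly_{\typeof_0}\). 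Thus the generating data of \(\thgat\) are \(\Ty_0\), the arrow \(\typeof_0\), and the polynomial-functor structure \((\poly, \leg, \wk, \proj, \subst)\), and a cartesian functor out of \(\thgat\) preserving pushforwards along \(\typeof_0\) is, by \cref{prop:preserve-poly}, exactly a cartesian functor commuting with this structure.

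First I would make precise the structural \emph{Key Lemma}: using \cref{prop:theories-pushouts} and \cref{prop:dtt-cocomplete}, every finite theory \(\gat\) is built from the terminal object by finitely many steps, each adjoining a type symbol, a term symbol, or an equational axiom in a context given by already-constructed data. Dually in \(\thgat\), adjoining a type symbol in a context of length \(\nat\) is the pullback of \(\ctxof_{\nat-1} : \Ty_{\nat} \to \Ty_{\nat-1}\) along the boundary arrow \(\gat' \to \Ty_{\nat-1}\) classifying the context; adjoining a term symbol of a type in a context of length \(\nat\) is the pullback of \(\typeof_{\nat}\) along the arrow \(\gat' \to \Ty_{\nat}\) classifying the context and type; and adjoining an equation is the equalizer of the two arrows \(\gat' \to \Ty_{\nat}\) or \(\gat' \to \El_{\nat}\) named by its two sides. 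Since a type (resp. term) in a context of length \(\nat\) in \(\gat'\) is the same as an arrow \(\gat' \to \Ty_{\nat}\) (resp. \(\gat' \to \El_{\nat}\)), this both presents \(\gat\) as a finite limit of the basic objects and identifies \(\thgat(\gat_2, \gat)\) with the corresponding limit of the sets \(\thgat(\gat_2, \Ty_{\nat})\) and \(\thgat(\gat_2, \El_{\nat})\); in particular the equivalence relation on interpretations and the irrelevance of the well-founded relation (\cref{rem:on-theories}) are automatically accounted for.

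For existence, given an exponentiable \(\typeof : \El \to \Ty\) in \(\cart\), \cref{lem:polynomial-1} equips the associated \(\poly_{\typeof}\) with a polynomial-functor structure for \(\typeof\) satisfying \cref{axiom:1,axiom:2,axiom:3,axiom:4}. I define \(\functor\) by transporting the finite-limit presentation of each \(\gat\): replace \(\Ty_{\nat}\) by \(\poly_{\typeof}^{\nat}\Ty\), \(\El_{\nat}\) by \(\poly_{\typeof}^{\nat}\El\), the canonical arrows \(\typeof_{\nat}, \ctxof_{\nat}\) and the legs \(\leg, \wk, \proj, \subst\) by the corresponding maps of \(\poly_{\typeof}\), and form the same pullbacks and equalizers in \(\cart\). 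Well-definedness — independence of the chosen presentation and the fact that composites of boundary maps land where required — is exactly the content of the polynomial axioms, which hold in \(\cart\) by \cref{lem:polynomial-1}; functoriality then follows from the limit description of the hom-sets in the Key Lemma. By construction \(\functor\) preserves finite limits, sends \(\typeof_0\) to \(\typeof\), and satisfies \(\functor\poly_{\typeof_0} \cong \poly_{\typeof}\functor\), so \cref{prop:preserve-poly} gives preservation of pushforwards along \(\typeof_0\). For uniqueness one runs this in reverse: if \(\functor'\) is cartesian with \(\functor'\typeof_0 \cong \typeof\) and preserves pushforwards along \(\typeof_0\), then \cref{prop:preserve-poly} forces \(\functor'\poly_{\typeof_0} \cong \poly_{\typeof}\functor'\), hence \(\functor'\Ty_{\nat} \cong \poly_{\typeof}^{\nat}\Ty\), \(\functor'\El_{\nat} \cong \poly_{\typeof}^{\nat}\El\), and agreement with \(\functor\) on all structural arrows; preservation of finite limits then propagates this to every object and morphism via the Key Lemma, with coherent isomorphisms, giving \(\functor' \cong \functor\) up to a unique isomorphism.

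I expect the main obstacle to be the Key Lemma, and specifically the verification that the object-level definition of \(\functor\) is independent of the presentation and genuinely extends to a functor. The delicate points are that adjoining an equational axiom is a limit (an equalizer) rather than a colimit in \(\thgat\), so type and term equations must be matched against the polynomial axioms, and that the boundary arrows encoding weakening and substitution compose coherently — which is precisely why the interaction axioms \cref{axiom:1,axiom:2,axiom:3,axiom:4} were isolated and checked for \(\typeof_0\) in \cref{prop:dtt-family-structure} and, on the target side, for \(\poly_{\typeof}\) in \cref{lem:polynomial-1}.
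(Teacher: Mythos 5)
Your reduction to the inductive finite-limit presentation of \(\thgat\) is sound as far as objects are concerned, and it is in fact the same presentation the paper isolates in \cref{prop:gat-induction}. But there is a genuine gap where you write that ``functoriality then follows from the limit description of the hom-sets in the Key Lemma.'' The limit description does reduce the action of \(\functor\) on morphisms to maps \(\thgat(\gat, \Ty_{\nat}) \to \cart(\functor\gat, \poly_{\typeof}^{\nat}\Ty)\) and \(\thgat(\gat, \El_{\nat}) \to \cart(\functor\gat, \poly_{\typeof}^{\nat}\El)\), but an element of \(\thgat(\gat, \Ty_{\nat})\) is an equivalence class of interpretations \(\Ty_{\nat} \to \gat\), i.e.\ an \emph{arbitrary derivable type} of \(\gat\) in a context of length \(\nat\), modulo provable equality. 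To produce the corresponding arrow in \(\cart\) you must interpret every derivation --- built from symbol applications, weakening, projection, substitution, and conversions along the equational axioms of \(\gat\) --- as an arrow of \(\cart\), prove by induction on derivations that this interpretation is well defined, and prove that provably equal expressions receive equal interpretations. The polynomial axioms \labelcref{axiom:1,axiom:2,axiom:3,axiom:4} supply only the basic interactions of the structural rules; they do not by themselves produce this interpretation, its soundness, its naturality in \(\gat\), or compatibility with composition of interpretations. This is exactly the technical core of the paper's proof: the paper avoids defining functoriality by hand by first building the ``externalization'' \(\iL : \cart^{\op} \to \GAT\) (where \(\iL\obj\) has \emph{all} arrows \(\obj \to \poly_{\typeof}^{\nat}\Ty\), \(\obj \to \poly_{\typeof}^{\nat}\El\) as symbols, with axioms encoding the polynomial structure), transposing it through the Gabriel--Ulmer equivalence \(\GAT \simeq \Cart(\thgat, \Set)\) of \cref{rem:locally-pres} to get a cartesian functor \(\widetilde{\iL} : \thgat \to [\cart^{\op}, \Set]\) whose functoriality is automatic, and then proving the representability lemma (\cref{representability-lemma}) --- and it is precisely inside that lemma that the CwF-style semantic interpretation \(\sem{\argu}\) and the induction on derivations showing \(\inc \circ \inc^{-1} = \id\) are carried out. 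Your proposal never identifies this step; your Key Lemma, which the paper regards as straightforward, is not where the difficulty lies.

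The same gap resurfaces in your uniqueness argument. Knowing \(\functor'\Ty_{\nat} \cong \poly_{\typeof}^{\nat}\Ty \cong \functor\Ty_{\nat}\) objectwise and ``propagating with coherent isomorphisms'' is not enough: a natural isomorphism \(\functor \cong \functor'\) must be natural with respect to \emph{all} morphisms of \(\thgat\), including those given by arbitrary derivable types and terms, not only the structural arrows \(\typeof_{\nat}\), \(\ctxof_{\nat}\), \(\wk\), \(\proj\), \(\subst\). The paper again routes around this by constructing the comparison at the level of externalizations, \(\itpr : \iL \To \iL'\), where a natural transformation is determined by its values on symbols (\cref{rem:on-theories}) so that naturality against arbitrary interpretations comes for free, and then transporting it through Yoneda; uniqueness of the resulting isomorphism is the part that genuinely follows from \cref{prop:gat-induction} as you suggest. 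So your overall architecture (basics generate under finite limits and \(\poly\); check everything on \(\Ty_{0}\), \(\El_{0}\)) matches the paper's, but the proposal is missing the device --- or some substitute for it, such as a hand-rolled partial interpretation with a soundness proof --- that actually yields a functor on, and natural transformations against, all morphisms of \(\thgat\).
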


This section is mostly devoted to proving \cref{thm:gat-ump}. In
\cref{sec:constr-funct-funct}, we construct a functor
\(\functor : \thgat \to \cart\) satisfying the required conditions. In
\cref{sec:uniqueness-functor-}, we show the uniqueness of such a
functor. We discuss variants of \cref{thm:gat-ump} in
\cref{sec:generalization}.

Throughout the proof of \cref{thm:gat-ump}, we extensively use the
following inductive presentations of finite generalized algebraic
theories. By definition, a finite generalized algebraic theory
\(\gat\) is a finite set of symbols and axioms with a well-founded
relation. We can arrange them into a list
\begin{align*}
  \cst_{1}
  &: \ctx_{1} \To \kind_{1} \\
  \cst_{2}
  &: \ctx_{2} \To \kind_{2} \\
  \vdots \\
  \cst_{\nat}
  &: \ctx_{\nat} \To \kind_{\nat}
\end{align*}
such that
\(\downset{\cst_{\idx}} \subset \{\cst_{1}, \dots, \cst_{\idx-1}\}\)
for every \(\idx\) so that every prefix
\((\cst_{1}, \dots, \cst_{\idx})\) is again a generalized algebraic
theory. Then the following inductive clauses cover all the objects of
\(\thgat\):
\begin{itemize}
\item \(\thgat\) has a terminal object;
\item if \(\gat\) is an object of \(\thgat\) and \(\ctx\) is a context
  of length \(\nat\), which corresponds to an arrow
  \(\gat \to \Ty_{\nat - 1}\), then we have an object
  \(\gat' = (\gat, \ty : \ctx \To \s{Type})\) fitting into the
  pullback
  \[
    \begin{tikzcd}
      \gat'
      \arrow[r]
      \arrow[d] &
      \Ty_{\nat}
      \arrow[d,"\ctxof_{\nat-1}"] \\
      \gat
      \arrow[r,"\ctx"'] &
      \Ty_{\nat-1}
    \end{tikzcd}
  \]
  in \(\thgat\), where we set \(\Ty_{-1}\) to be the terminal object;
\item if \(\gat\) is an object of \(\thgat\) and \(\ty\) is a type
  over a context of length \(\nat\), which corresponds to an arrow
  \(\gat \to \Ty_{\nat}\), then we have an object
  \(\gat' = (\gat, \el : \ctx \To \ty)\) fitting into the pullback
  \[
    \begin{tikzcd}
      \gat'
      \arrow[r]
      \arrow[d] &
      \El_{\nat}
      \arrow[d,"\typeof_{\nat}"] \\
      \gat
      \arrow[r,"\ty"'] &
      \Ty_{\nat}
    \end{tikzcd}
  \]
  in \(\thgat\);
\item if \(\gat\) is an object of \(\thgat\) and \(\ty_{1}\) and
  \(\ty_{2}\) are types over the same context \(\ctx\), then we have
  an object
  \(\gat' = (\gat, \_ : \ctx \To \ty_{1} = \ty_{2} : \s{Type})\)
  fitting into the equalizer
  \[
    \begin{tikzcd}
      \gat'
      \arrow[r] &
      \gat
      \arrow[r,shift left,"\ty_{1}"]
      \arrow[r,shift right,"\ty_{2}"'] &
      \Ty_{\nat}
    \end{tikzcd}
  \]
  in \(\thgat\);
\item if \(\gat\) is an object of \(\thgat\) and \(\el_{1}\) and
  \(\el_{2}\) are elements of the same type \(\ty\), which correspond
  to arrows \(\gat \to \El_{\nat}\), then we have an object
  \(\gat' = (\gat, \_ : \ctx \To \el_{1} = \el_{2} : \ty)\) fitting
  into the equalizer
  \[
    \begin{tikzcd}
      \gat'
      \arrow[r] &
      \gat
      \arrow[r,shift left,"\el_{1}"]
      \arrow[r,shift right,"\el_{2}"'] &
      \El_{\nat}
    \end{tikzcd}
  \]
  in \(\thgat\).
\end{itemize}
From this presentation, we get the following ``induction principle''.

\begin{lemma}
  \label{prop:gat-induction}
  Let \(\pred\) be a predicate on objects of \(\thgat\).
  \begin{enumerate}
  \item If all \(\Ty_{\nat}\) and \(\El_{\nat}\) satisfy \(\pred\) and
    \(\pred\) is stable under finite limits (that is, for a finite
    diagram \(\gat : \idxcat \to \thgat\), if every \(\gat_{\idx}\)
    satisfies \(\pred\) then the limit
    \(\lim_{\idx \in \idxcat}\gat_{\idx}\) satisfies \(\pred\)), then
    all the objects of \(\thgat\) satisfy \(\pred\).
  \item If \(\Ty_{0}\) and \(\El_{0}\) satisfy \(\pred\) and \(\pred\)
    is stable under finite limits and stable under
    \(\poly_{\typeof_{0}}\) (that is, if \(\gat\) satisfies \(\pred\),
    then \(\poly_{\typeof_{0}}\gat\) satisfies \(\pred\)), then all
    the objects of \(\thgat\) satisfy \(\pred\).
  \end{enumerate}
  \qed
\end{lemma}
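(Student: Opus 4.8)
The plan is to run an induction driven by the inductive presentation of objects of \(\thgat\) set up immediately before the statement. For part~(1), I would induct on the length \(\nat\) of a list \(\cst_{1} : \ctx_{1} \To \kind_{1}, \dots, \cst_{\nat} : \ctx_{\nat} \To \kind_{\nat}\) presenting an object \(\gat\), arranged so that every prefix \((\cst_{1}, \dots, \cst_{\idx})\) is again a generalized algebraic theory. The base case is the empty list, which yields the terminal object \(\term\); since \(\term\) is the limit of the empty diagram, stability under finite limits gives \(\pred(\term)\). For the inductive step, passing from the prefix \(\gat_{\idx-1} = (\cst_{1}, \dots, \cst_{\idx-1})\) to \(\gat_{\idx} = (\cst_{1}, \dots, \cst_{\idx})\) falls under exactly one of the four remaining clauses, according to whether \(\cst_{\idx}\) adjoins a type symbol, a term symbol, a type equation, or a term equation. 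In each case \(\gat_{\idx}\) is exhibited as a pullback or an equalizer --- hence a finite limit --- of a diagram whose objects are \(\gat_{\idx-1}\) together with some of \(\Ty_{\nat}\), \(\El_{\nat}\) and \(\Ty_{\nat-1}\). As \(\gat_{\idx-1}\) satisfies \(\pred\) by the induction hypothesis and the remaining objects satisfy \(\pred\) by assumption, stability under finite limits delivers \(\pred(\gat_{\idx})\).

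For part~(2) I would reduce to part~(1) by first establishing that every \(\Ty_{\nat}\) and \(\El_{\nat}\) satisfies \(\pred\). This is exactly where the extra hypothesis of stability under \(\poly_{\typeof_{0}}\) is used: by the preceding corollary we have \(\Ty_{\nat} \cong \poly_{\typeof_{0}}^{\nat}\Ty_{0}\) and \(\El_{\nat} \cong \poly_{\typeof_{0}}^{\nat}\El_{0}\), so starting from the base cases \(\pred(\Ty_{0})\) and \(\pred(\El_{0})\) and applying stability under \(\poly_{\typeof_{0}}\) a total of \(\nat\) times yields \(\pred(\Ty_{\nat})\) and \(\pred(\El_{\nat})\) for all \(\nat\). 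With this in hand, the hypotheses of part~(1) are met and its conclusion applies verbatim.

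The one point that needs care is isomorphism-invariance of \(\pred\): both the four clauses and the corollary identify objects only up to isomorphism, so I would note at the outset that stability under finite limits already forces \(\pred\) to respect isomorphisms, since the limit of the one-object diagram \(\gat\) is any object isomorphic to \(\gat\); this legitimizes transporting \(\pred\) along the displayed isomorphisms \(\gat_{\idx} \cong (\text{pullback})\) and \(\Ty_{\nat} \cong \poly_{\typeof_{0}}^{\nat}\Ty_{0}\). Beyond this the argument is bookkeeping, and I do not anticipate a genuine obstacle: the substantive content lives in the inductive presentation preceding the statement, which guarantees that every object of \(\thgat\) is reached from \(\term\) by finitely many applications of the clauses, and the finite-limit stability hypothesis is tailored precisely to absorb each clause in a single step.
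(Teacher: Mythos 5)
Your proposal is correct and is exactly the argument the paper has in mind: the lemma is stated with an immediate \(\qed\) because it follows from the inductive presentation of objects of \(\thgat\) given just before it, and your proof simply spells out that induction (prefix-by-prefix for part~(1), then reducing part~(2) to part~(1) via \(\Ty_{\nat} \cong \poly_{\typeof_{0}}^{\nat}\Ty_{0}\) and \(\El_{\nat} \cong \poly_{\typeof_{0}}^{\nat}\El_{0}\)). Your observation that stability under finite limits already forces isomorphism-invariance of \(\pred\) is a correct and worthwhile piece of bookkeeping that the paper leaves implicit.
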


\subsection{Constructing a functor \(\functor : \thgat \to \cart\)}
\label{sec:constr-funct-funct}

Let us fix a cartesian category \(\cart\) and an exponentiable arrow
\(\typeof : \El \to \Ty\) in \(\cart\). We construct a cartesian functor
\(\functor : \thgat \to \cart\) that sends \(\typeof_{0}\) to \(\typeof\)
and pushforwards along \(\typeof_{0}\) to those along \(\typeof\). The
outline is as follows.
\begin{enumerate}
\item A cartesian functor \(\thgat \to \cart\) is thought of as an
  ``internal generalized algebraic theory in \(\cart\)'', so it would
  have an ``externalization'' \(\cart^{\op} \to \GAT\). The
  externalization is easier to describe than the internal generalized
  algebraic theory itself, so we will first define a functor
  \(\iL : \cart^{\op} \to \GAT\).
\item Using the equivalence \(\GAT \simeq \Cart(\thgat, \Set)\), the
  functor \(\iL\) corresponds to a cartesian functor
  \(\widetilde{\iL} : \thgat \to [\cart^{\op}, \Set]\). We show that
  the functor \(\widetilde{\iL}\) factors through the Yoneda
  embedding, that is, \(\widetilde{\iL}\gat\) is representable for
  every \(\gat \in \thgat\). Let \(\functor : \thgat \to \cart\) be
  the induced functor.
\item We show that the functor \(\functor : \thgat \to \cart\)
  satisfies the required conditions.
\end{enumerate}

In what follows, for an arrow
\(\ty : \obj \to \poly_{\typeof}^{\nat}\Ty\) and \(\idx \le \nat\), we
refer to the composite
\(\obj \overset{\ty}{\longrightarrow} \poly_{\typeof}^{\nat}\Ty \to
\poly_{\typeof}^{\idx}\Ty\) as \(\ty_{\idx}\).

\begin{construction}
  We define a functor
  \[
    \iL : \cart^{\op} \to \GAT
  \]
  as follows. For an object \(\obj \in \cart\), we define \(\iL\obj\)
  to be the generalized algebraic theory consisting of the following
  data:
  \begin{itemize}
  \item a symbol
    \[
      \ty : (\var_{0} : \ty_{0}, \dots, \var_{\nat-1} :
      \ty_{\nat-1}(\var_{0}, \dots, \var_{\nat-2})) \To \s{Type}
    \]
    for any arrow \(\ty : \obj \to \poly_{\typeof}^{\nat}\Ty\);
  \item a symbol
    \[
      \el : (\var_{0} : \ty_{0}, \dots, \var_{\nat-1} :
      \ty_{\nat-1}(\var_{0}, \dots, \var_{\nat-2})) \To \ty(\var_{0},
      \dots, \var_{\nat-1})
    \]
    for any arrow \(\el : \obj \to \poly_{\typeof}^{\nat}\El\), where
    \(\ty\) is the composite
    \(\obj \overset{\el}{\longrightarrow} \poly_{\typeof}^{\nat}\El \to
    \poly_{\typeof}^{\nat}\Ty\);
  \item an equation
    \begin{multline*}
      (\var_{0} : \ty_{0}, \dots,\var_{\natI-1} :
      \ty_{\natI-1}(\var_{0}, \dots, \var_{\natI-2}), \var'_{\natI} :
      \ty'_{\natI}(\var_{0}, \dots, \var_{\natI-1}), \\
      \var_{\natI} : \ty_{\natI}(\var_{0}, \dots, \var_{\natI-1}),
      \dots, \var_{\nat-1} : \ty_{\nat-1}(\var_{0}, \dots,
      \var_{\nat-2})) \To \\
      \cst(\var_{0}, \dots, \var_{\nat}) =
      (\poly_{\typeof}^{\natI}\wk_{\poly_{\typeof}^{\nat-\natI}\BE} \circ
      (\cst, \ty'_{\natI}))(\var_{0}, \dots, \var'_{\natI},
      \var_{\natI}, \dots, \var_{\nat})
    \end{multline*}
    for any arrow \(\cst : \obj \to \poly_{\typeof}^{\nat}\BE\) with
    \(\BE = \Ty\) or \(\BE = \El\) and any arrow
    \(\ty'_{\natI} : \obj \to \poly_{\typeof}^{\natI}\Ty\) over
    \(\ty_{\natI-1} : \obj \to \poly_{\typeof}^{\natI-1}\Ty\) with
    \(\natI \le \nat\);
  \item an equation
    \begin{multline*}
      (\var_{0} : \ty_{0}, \dots, \var_{\nat} : \ty_{\nat}(\var_{0},
      \dots, \var_{\nat-1})) \To \var_{\nat} =
      (\poly_{\typeof}^{\nat}\proj \circ \ty_{\nat})(\var_{0}, \dots,
      \var_{\nat})
    \end{multline*}
    for any arrow \(\ty : \obj \to \poly_{\typeof}^{\nat}\Ty\);
  \item an equation
    \begin{multline*}
      (\var_{0} : \ty_{0}, \dots, \var_{\natI-1} :
      \ty_{\natI-1}(\var_{0}, \dots, \var_{\natI-2}), \\
      \var_{\natI+1} : \ty_{\nat+1}(\var_{0}, \dots, \el_{\natI}),
      \dots, \var_{\nat} : \ty_{\nat}(\var_{0}, \dots, \el_{\natI},
      \dots, \var_{\nat-1})) \To \\
      \cst(\var_{0}, \dots, \el_{\natI}, \dots, \var_{\nat}) =
      (\poly_{\typeof}^{\natI}\subst_{\poly_{\typeof}^{\nat-\natI}\BE} \circ
      (\cst, \el_{\natI}))(\var_{0}, \dots, \var_{\natI-1},
      \var_{\natI+1}, \dots, \var_{\nat})
    \end{multline*}
    for any arrow \(\cst : \obj \to \poly_{\typeof}^{\nat+1}\BE\) with
    \(\BE = \Ty\) or \(\BE = \El\) and any arrow
    \(\el_{\natI} : \obj \to \poly_{\typeof}^{\natI}\El\) over
    \(\ty_{\natI} : \obj \to \poly_{\typeof}^{\natI}\Ty\) with
    \(\natI \le \nat\).
  \end{itemize}
  For an arrow \(\arr : \obj_{1} \to \obj_{2}\), the precomposition
  with \(\arr\) induces an interpretation
  \(\iL\arr : \iL\obj_{2} \to \iL\obj_{1}\).
\end{construction}

We have equivalences
\begin{align*}
  [\cart^{\op}, \GAT]
  &\simeq [\cart^{\op}, \Cart(\thgat, \Set)]
  & \text{(\cref{eq:31})} \\
  &\simeq \Cart(\thgat, [\cart^{\op}, \Set]).
\end{align*}
Let
\[
  \widetilde{\iL} : \thgat \to [\cart^{\op}, \Set]
\]
be the cartesian functor corresponding to
\(\iL : \cart^{\op} \to \GAT\). Concretely, one can define
\[
  \widetilde{\iL}\gat = (\cart^{\op} \ni \obj \mapsto \GAT(\gat,
  \iL\obj) \in \Set)
\]
for \(\gat \in \thgat\).

We show the following to obtain a functor
\(\functor : \thgat \to \cart\) satisfying the required conditions.

\begin{lemma}
  \label{item:1}
  \(\widetilde{\iL}\) factors through the Yoneda embedding
  \(\cart \to [\cart^{\op}, \Set]\) up to natural isomorphism. We
  write \(\functor : \thgat \to \cart\) for the induced functor which
  is automatically cartesian as the Yoneda embedding preserves finite
  limits.
  \[
    \begin{tikzcd}
      \thgat
      \arrow[r, dotted, "\functor"]
      \arrow[rd, "\widetilde{\iL}"'] &
      \cart
      \arrow[d, hook] \\
      & {[\cart^{\op}, \Set]}
    \end{tikzcd}
  \]
\end{lemma}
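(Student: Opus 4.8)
The plan is to prove the stronger pointwise statement that $\widetilde{\iL}\gat$ lies in the essential image of the Yoneda embedding for every $\gat \in \thgat$, and to argue this by the induction principle \cref{prop:gat-induction}. Take $\pred(\gat)$ to be the predicate ``$\widetilde{\iL}\gat$ is representable''. By \cref{prop:gat-induction}(2) it then suffices to verify that $\pred$ holds of $\Ty_{0}$ and $\El_{0}$, is stable under finite limits, and is stable under $\poly_{\typeof_{0}}$. Once this is done the desired $\functor$ is the factorization of $\widetilde{\iL}$ through the fully faithful Yoneda embedding, and is automatically cartesian as stated.

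Stability under finite limits is immediate: $\widetilde{\iL}$ is cartesian, the Yoneda embedding preserves finite limits, and $\cart$ has all finite limits, so a finite limit in $\thgat$ of objects $\gat_{\idx}$ with $\widetilde{\iL}\gat_{\idx} \cong \cart(\argu, \functor\gat_{\idx})$ is represented by $\lim_{\idx}\functor\gat_{\idx}$. For the base cases I would unwind the definition of $\iL$. An interpretation $\Ty_{0} \to \iL\obj$ is a choice of a closed type of $\iL\obj$, and an interpretation $\El_{0} \to \iL\obj$ is a choice of a closed type together with a closed term of it. The key point is that the equations imposed in the construction of $\iL\obj$ --- which are exactly the weakening, projection and substitution equations governed by $\wk$, $\proj$ and $\subst$ --- force every closed type to be equal, modulo the congruence, to a unique generating type symbol, that is, to a unique arrow $\obj \to \Ty$, and likewise every closed term to a unique arrow $\obj \to \El$. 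Hence $\widetilde{\iL}\Ty_{0} \cong \cart(\argu, \Ty)$ and $\widetilde{\iL}\El_{0} \cong \cart(\argu, \El)$ naturally in $\obj$, so that $\functor\Ty_{0} = \Ty$ and $\functor\El_{0} = \El$.

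The crux is stability under $\poly_{\typeof_{0}}$. By \cref{cor:gat-exp} and \cref{thm:family-repr} the functor $\poly$ of \cref{def:dtt-family} is isomorphic to the associated polynomial functor $\poly_{\typeof_{0}}$, so $\poly_{\typeof_{0}}\gat \cong \gat^{\ty_{0}}$. Assuming $\pred(\gat)$ with $\widetilde{\iL}\gat \cong \cart(\argu, \functor\gat)$, I would establish a bijection, natural in $\obj$,
\[
  \GAT(\gat^{\ty_{0}}, \iL\obj) \;\cong\; \sum_{A \,:\, \obj \to \Ty} \GAT\bigl(\gat, \iL(\obj \times_{\Ty} \El)\bigr),
\]
where the pullback $\obj \times_{\Ty} \El$ is taken along $A$ and $\typeof$. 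The idea is that an interpretation out of $\gat^{\ty_{0}}$ amounts to the choice of an interpretation $A$ of the fresh type $\ty_{0}$ (an arrow $\obj \to \Ty$) together with an interpretation of the hypothesized copy of $\gat$ in the context $\var_{0} : A$, and ``$\iL\obj$ in the context $\var_{0} : A$'' is naturally identified with $\iL(\obj \times_{\Ty} \El)$. Applying the induction hypothesis turns the right-hand side into $\sum_{A : \obj \to \Ty}\cart(\obj \times_{\Ty} \El, \functor\gat)$, which is exactly the partial-product universal property of $\poly_{\typeof}(\functor\gat)$ --- the datum of a leg $A$ together with a map $\obj \times_{\Ty} \El \to \functor\gat$ supplied through the counit $\subst$, as in \cref{thm:family-repr}. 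Thus $\widetilde{\iL}(\poly_{\typeof_{0}}\gat) \cong \cart(\argu, \poly_{\typeof}(\functor\gat))$, so $\functor\poly_{\typeof_{0}}\gat \cong \poly_{\typeof}(\functor\gat)$ is again representable.

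The main obstacle I expect is the displayed bijection, and specifically the identification of ``$\iL\obj$ in the context $\var_{0} : A$'' with $\iL(\obj \times_{\Ty} \El)$. Verifying this requires matching, generator by generator and equation by equation, the hypothesized structure of $\gat^{\ty_{0}}$ against the generators and the weakening, projection and substitution equations of $\iL$; this is precisely where the polynomial-functor axioms \cref{axiom:1,axiom:2,axiom:3,axiom:4} for $(\wk, \proj, \subst)$ get used, now on the side of $\typeof$ in $\cart$. Everything else is bookkeeping carried by the induction principle.
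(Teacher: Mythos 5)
Your overall strategy differs from the paper's: you run the induction of \cref{prop:gat-induction}(2) with base cases \(\Ty_{0}\), \(\El_{0}\) and stability under \(\poly_{\typeof_{0}}\), whereas the paper proves representability of \(\widetilde{\iL}\Ty_{\nat}\) and \(\widetilde{\iL}\El_{\nat}\) for all \(\nat\) at once (\cref{representability-lemma}) and then applies \cref{prop:gat-induction}(1), for which stability under finite limits alone suffices. Your decomposition is legitimate in principle, and your reduction of the \(\poly\)-stability step to the adjunction \((\argu \times_{\Ty} \El) \dashv (\poly_{\typeof}, \leg)\) is the right skeleton. But there is a genuine gap at the heart of both your base case and your displayed bijection: every uniqueness (injectivity) claim is asserted with no mechanism to prove it. You say the equations of \(\iL\obj\) ``force every closed type to be equal, modulo the congruence, to a unique generating type symbol''; the existence half is an induction on derivations (every expression is provably equal to a symbol applied to variables), but the uniqueness half --- that two distinct arrows \(\obj \to \Ty\) cannot yield provably equal symbols in \(\iL\obj\) --- cannot be read off from the equations, since \(\iL\obj\) is a quotient by an inductively generated congruence, and a priori that congruence could collapse distinct symbols. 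The paper's proof devotes most of its length to exactly this point: it constructs a semantic interpretation \(\sem{\argu}\) of the contexts, types and terms of \(\iL\obj\) in \(\cart/\obj\) (the category-with-families structure associated to \(\typeof\)), verifies via \cref{eq:11,eq:12,eq:13,eq:14} that this interpretation respects the axioms of \(\iL\obj\), and uses it to define the inverses \(\inc_{\Ty}^{-1}\) and \(\inc_{\El}^{-1}\); injectivity of \(\inc_{\Ty}\) and \(\inc_{\El}\) is precisely soundness of that interpretation. Nothing in your proposal plays this role.

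The same omission undermines the crux of your inductive step. The ``matching, generator by generator and equation by equation'' that you anticipate only shows that the two candidate translations --- transposition of symbols in one direction, and weakening along \(\prodpr_{1}\) followed by substitution of the canonical term \(\prodpr_{2}\) in the other --- are well-defined interpretations; it does not show that they induce mutually inverse bijections on equivalence classes of expressions, which is what the bijection \(\GAT(\gat^{\ty_{0}}, \iL\obj) \cong \sum_{A : \obj \to \Ty}\GAT(\gat, \iL(\obj \times_{\Ty} \El))\) actually requires. Proving that the round trips are identities again needs both ingredients: induction on derivations, to reduce an arbitrary expression of \(\iL\obj\) in a context beginning with \(\var_{0} : A\) to one in the image of the translation, and soundness, to rule out collapse. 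Once you supply these two ingredients you have, in effect, reproved the paper's \cref{representability-lemma} localized over \(A\); so your route is not shorter --- it postpones the two essential technical arguments and then needs them in a slightly more general form.
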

\begin{lemma}
  \label{item:2}
  \(\functor : \thgat \to \cart\) sends \(\typeof_{0}\) to \(\typeof\).
\end{lemma}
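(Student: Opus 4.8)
The plan is to compute the two presheaves $\widetilde{\iL}\Ty_{0}$ and $\widetilde{\iL}\El_{0}$ explicitly, identify their representing objects with $\Ty$ and $\El$, and then check that the map induced by $\typeof_{0}$ is postcomposition with $\typeof$. By \cref{item:1} we already have the cartesian functor $\functor$ with a natural isomorphism $\cart(\obj, \functor\gat) \cong \widetilde{\iL}\gat(\obj) = \GAT(\gat, \iL\obj)$, so it suffices to produce natural bijections $\GAT(\Ty_{0}, \iL\obj) \cong \cart(\obj, \Ty)$ and $\GAT(\El_{0}, \iL\obj) \cong \cart(\obj, \El)$ and to trace $\typeof_{0}$ through them.

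First I would unwind the hom-sets. Since $\Ty_{0}$ has the single type symbol $\ty_{0} : () \To \s{Type}$, and equivalence of interpretations at a type symbol means judgmental equality of the assigned type, $\GAT(\Ty_{0}, \iL\obj)$ is the set of closed types of $\iL\obj$ modulo judgmental equality; likewise $\GAT(\El_{0}, \iL\obj)$ is the set of pairs (closed type, closed term of that type) modulo judgmental equality. By construction of $\iL\obj$, the arity-$0$ type and term symbols are by definition the arrows $\obj \to \poly_{\typeof}^{0}\Ty = \Ty$ and $\obj \to \poly_{\typeof}^{0}\El = \El$. I therefore define $\Phi$ by sending $\ty : \obj \to \Ty$ to the closed type symbol $\ty$, and $\el : \obj \to \El$ to the closed term symbol $\el$ (whose type is the closed type $\typeof \circ \el$); both assignments are manifestly natural in $\obj$.

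For surjectivity I would use the substitution clauses of $\iL\obj$: a general closed type has the shape $\cst(\el_{0}, \dots)$ for a type symbol $\cst : \obj \to \poly_{\typeof}^{\nat}\Ty$ of positive arity applied to closed terms, and the substitution clause with $\natI = 0$ rewrites it to $\subst_{\poly_{\typeof}^{\nat-1}\Ty} \circ (\cst, \el_{0})$, an arrow into $\poly_{\typeof}^{\nat-1}\Ty$; iterating drives the arity down to $0$, so every closed type is judgmentally equal to one in the image of $\Phi$, and similarly for terms. The hard part is the well-definedness of the inverse $\Psi$: judgmentally equal closed types (resp. terms) must yield the same arrow into $\Ty$ (resp. $\El$). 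Since judgmental equality is generated by the equational axioms of $\iL\obj$, it is enough to check that each such axiom becomes an identity of arrows after reducing both sides to arity $0$. This is exactly where \cref{thm:family-repr} and \cref{prop:dtt-family-structure} enter: because $\typeof$ is exponentiable, $(\poly_{\typeof}, \leg, \wk, \proj, \subst)$ is a polynomial functor for $\typeof$, and the weakening, projection and substitution equations of $\iL\obj$ are precisely the externalizations of the naturality of $\wk$, $\proj$, $\subst$ together with \cref{axiom:1,axiom:2,axiom:3,axiom:4}; verifying that $\Psi$ respects them is a diagram chase through these axioms. I expect this well-definedness check to be the main obstacle, since it requires matching each syntactic equation of $\iL\obj$ against the corresponding polynomial-functor identity.

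Finally I would conclude. The bijections give $\functor\Ty_{0} \cong \Ty$ and $\functor\El_{0} \cong \El$. The arrow $\typeof_{0} : \El_{0} \to \Ty_{0}$ of $\thgat$ is the inclusion $\Ty_{0} \hookrightarrow \El_{0}$ of generalized algebraic theories, so $\widetilde{\iL}\typeof_{0}$ is precomposition with this inclusion, which sends an interpretation $\El_{0} \to \iL\obj$ to its restriction along $\ty_{0}$, i.e.\ forgets the chosen closed term and keeps its type. Under the bijections above this is exactly the map $\cart(\obj, \El) \to \cart(\obj, \Ty)$ given by postcomposition with $\typeof$, which is the Yoneda image of $\typeof$. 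Hence $\functor\typeof_{0} \cong \typeof$ under the identifications $\functor\El_{0} \cong \El$ and $\functor\Ty_{0} \cong \Ty$, as claimed.
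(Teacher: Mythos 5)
Your overall route is the same as the paper's: \cref{item:2} is deduced from representability of \(\widetilde{\iL}\Ty_{\nat}\) and \(\widetilde{\iL}\El_{\nat}\) (the paper's \cref{representability-lemma}, of which \cref{item:2} is the case \(\nat = 0\)), with the unit maps ``arrow \(\mapsto\) symbol'' and the final tracing of \(\typeof_{0}\) exactly as you describe; your explicit check that the bijections intertwine \(\widetilde{\iL}\typeof_{0}\) with postcomposition by \(\typeof\) is a point the paper leaves implicit, and it is correct. The problem is the step you yourself flag as the main obstacle: the well-definedness of \(\Psi\) is not proved, and the strategy you propose for it cannot work as stated, because you restrict attention to closed types and terms. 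Judgmental equality between \emph{closed} expressions of \(\iL\obj\) is derived through judgments in \emph{non-empty} contexts: the congruence and substitution rules of \cref{fig:derivable-judgments} let a derivation of an equality between closed types pass through open types and terms, and even your surjectivity reduction does so --- the substitution axiom of \(\iL\obj\) with \(\natI = 0\) applied to a symbol \(\cst : \obj \to \poly_{\typeof}^{\nat+1}\Ty\) with \(\nat \geq 1\) is an equation between types with the free variables \(\var_{1}, \dots, \var_{\nat}\) still present. So a map \(\Psi\) defined only on closed expressions cannot be shown compatible with judgmental equality by induction on derivations; at the open intermediate judgments there is nothing to apply the inductive hypothesis to.

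The paper closes exactly this gap by constructing an interpretation \(\sem{\argu}\) of \emph{all} contexts, types and terms of \(\iL\obj\) into \(\cart\) --- contexts as objects of \(\cart/\obj\), types as arrows into \(\Ty\), terms as arrows into \(\El\) --- using pullbacks along \(\typeof\) and the adjunction \(\typeof^{*} \adj \typeof_{*}\), i.e.\ the category-with-families structure associated to the exponentiable arrow (the standard semantic argument of Hofmann cited there). It then verifies \cref{eq:14,eq:11,eq:12,eq:13}, which say that this interpretation validates the weakening, projection and substitution axioms of \(\iL\obj\); these equations are the rigorous form of your ``diagram chase through \cref{axiom:1,axiom:2,axiom:3,axiom:4}'', but they are statements about arrows \(\obj \to \poly_{\typeof}^{\nat}\Ty\) and \(\obj \to \poly_{\typeof}^{\nat}\El\) for all \(\nat\) at once, not just \(\nat = 0\). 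With \(\inc^{-1}\) defined semantically at every arity, both the well-definedness on equality classes and the identity \(\inc \circ \inc^{-1} = \id\) (by induction on derivations) go through. To repair your proof you should therefore prove the representability statement for all \(\nat\) simultaneously, defining the inverse by a semantic interpretation rather than by syntactic arity reduction; the case \(\nat = 0\) alone is not self-contained.
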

\begin{lemma}
  \label{item:3}
  \(\functor : \thgat \to \cart\) carries pushforwards along
  \(\typeof_{0}\) to those along \(\typeof\).
\end{lemma}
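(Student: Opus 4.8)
The plan is to reduce the statement to a single isomorphism of functors and then verify that isomorphism through the Yoneda embedding. By \cref{prop:preserve-poly}, together with the facts that \(\functor\typeof_{0} \cong \typeof\) (\cref{item:2}) and that \(\poly\) is a polynomial functor for \(\typeof_{0}\) (\cref{prop:dtt-family-structure}), hence isomorphic to \(\poly_{\typeof_{0}}\) (\cref{thm:family-repr}), the functor \(\functor\) carries pushforwards along \(\typeof_{0}\) to those along \(\typeof\) precisely when the canonical natural transformation \(\beta : \functor\poly \To \poly_{\typeof}\functor\) is an isomorphism. Here \(\poly_{\typeof} : \cart \to \cart\) is the polynomial functor associated to \(\typeof\), and, concretely, \(\beta_{\gat}\) is the map into the partial product \(\poly_{\typeof}\functor\gat\) determined by the leg \(\functor\leg_{\gat} : \functor\poly\gat \to \Ty\) and by \(\functor\subst_{\gat} : \functor\poly\gat \times_{\Ty} \El \to \functor\gat\), using that \(\functor\) preserves the pullback \(\poly\gat \times_{\Ty_{0}} \El_{0}\). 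So it suffices to prove that \(\beta_{\gat}\) is an isomorphism for every finite theory \(\gat\).

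I would establish this by showing that postcomposition with \(\beta_{\gat}\) is a bijection \(\cart(\obj, \functor\poly\gat) \to \cart(\obj, \poly_{\typeof}\functor\gat)\) natural in \(\obj \in \cart\) and \(\gat \in \thgat\), and then invoke the Yoneda lemma. The left-hand side unwinds, by the representability of \cref{item:1}, to \(\GAT(\poly\gat, \iL\obj) = \GAT(\gat^{\ty_{0}}, \iL\obj)\), since \(\poly\gat = \gat^{\ty_{0}}\). The right-hand side unwinds, by the universal property of the partial product \(\poly_{\typeof}\functor\gat\), to the set of pairs \((f, g)\) with \(f : \obj \to \Ty\) and \(g : \obj \times_{\Ty} \El \to \functor\gat\); applying \cref{item:1} once more rewrites \(g\) as an interpretation in \(\GAT(\gat, \iL(\obj \times_{\Ty} \El))\). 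Thus the content of the lemma is a natural bijection
\[
  \GAT(\gat^{\ty_{0}}, \iL\obj) \cong \{ (f, h) : f \in \cart(\obj, \Ty),\ h \in \GAT(\gat, \iL(\obj \times_{\Ty} \El)) \}.
\]

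The bijection comes from the very shape of \(\gat^{\ty_{0}}\): an interpretation \(\gat^{\ty_{0}} \to \iL\obj\) must send the fresh type symbol \(\ty_{0}\) to a closed type of \(\iL\obj\), that is, to an arrow \(f : \obj \to \Ty\), and must then interpret the hypothesized copy of \(\gat\). The key point is the partial-product adjunction: for \(\BE \in \{\Ty, \El\}\), arrows \(\obj \to \poly_{\typeof}^{\nat+1}\BE\) whose leg to \(\Ty\) equals \(f\) correspond bijectively to arrows \(\obj \times_{\Ty} \El \to \poly_{\typeof}^{\nat}\BE\). Under this correspondence the types and terms of \(\iL\obj\) sitting in a context beginning with \(\var_{0} : \ty_{0} = f\) match exactly the types and terms of \(\iL(\obj \times_{\Ty} \El)\), and the hypothesized symbols \(\hypsz{\var_{0}}\cst\) of \(\gat^{\ty_{0}}\) correspond to the symbols \(\cst\) of \(\gat\). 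I expect the main obstacle to be bookkeeping rather than insight: one must check that the four families of equations imposed in the definition of \(\iL\) are carried to one another under this correspondence, which amounts to tracking how the weakening, projection and substitution data of \(\poly_{\typeof}\) interact with the partial-product adjunction, precisely the content of \cref{axiom:1,axiom:2,axiom:3,axiom:4}. Once the bijection is identified with postcomposition by \(\beta_{\gat}\) and seen to be natural in \(\obj\) and \(\gat\), the Yoneda lemma yields that \(\beta_{\gat}\) is an isomorphism, completing the proof.
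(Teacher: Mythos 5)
Your proposal is correct, and its opening move is the same as the paper's: both invoke \cref{prop:preserve-poly} (together with \cref{item:2}, \cref{prop:dtt-family-structure} and \cref{thm:family-repr}) to reduce the lemma to showing that the canonical natural transformation \(\beta : \functor\poly_{\typeof_{0}} \To \poly_{\typeof}\functor\) is an isomorphism. After that the arguments genuinely diverge. The paper notes that \(\functor\), \(\poly_{\typeof_{0}}\) and \(\poly_{\typeof}\) all preserve finite limits (the latter two as functors into the slices over \(\Ty_{0}\) and \(\Ty\)), so the predicate ``\(\beta_{\gat}\) is an isomorphism'' is stable under finite limits; the induction principle \cref{prop:gat-induction} then reduces everything to the cases \(\gat = \Ty_{\nat}\) and \(\gat = \El_{\nat}\), which are immediate from \cref{representability-lemma} and the isomorphisms \(\poly_{\typeof_{0}}\Ty_{\nat} \cong \Ty_{\nat+1}\), \(\poly_{\typeof_{0}}\El_{\nat} \cong \El_{\nat+1}\) of \cref{exm:family-theory}, with no new syntax-level work. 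You instead treat an arbitrary \(\gat\) head-on: identify \(\cart(\obj, \functor\poly\gat)\) with \(\GAT(\gat^{\ty_{0}}, \iL\obj)\) via \cref{item:1}, identify \(\cart(\obj, \poly_{\typeof}\functor\gat)\) with pairs \((f, h)\) via the partial-product universal property and \cref{item:1} again, and build the bijection by matching interpretations of \(\gat^{\ty_{0}}\) with such pairs. This is a valid route, and your description of \(\beta_{\gat}\) through \(\functor\leg_{\gat}\) and \(\functor\subst_{\gat}\) does agree with the canonical transformation of \cref{prop:preserve-poly}. But the ``bookkeeping'' you defer is substantial and is exactly what the paper's reduction to \(\Ty_{\nat}\) and \(\El_{\nat}\) is engineered to avoid: you must check that the correspondence of types and terms respects contexts and the four equation families in the definition of \(\iL\obj\), and---crucially---that your bijection \emph{is} postcomposition with \(\beta_{\gat}\), since an abstract isomorphism \(\functor\poly\gat \cong \poly_{\typeof}\functor\gat\) from Yoneda alone would not feed into \cref{prop:preserve-poly}. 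That verification essentially replays \cref{eq:11,eq:12,eq:13} from the proof of \cref{representability-lemma} in relative form. In exchange, your argument is more self-contained and proves something slightly sharper, namely that \(\iL\) intertwines hypothesizing \(\gat \mapsto \gat^{\ty_{0}}\) with the partial product over \(\typeof\); the paper's argument is considerably shorter because it recycles \cref{prop:gat-induction} and \cref{representability-lemma} wholesale.
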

All of these follow from the following lemma.
\begin{lemma}
  \label{representability-lemma}
  \(\widetilde{\iL}\Ty_{\nat} : \cart^{\op} \to \Set\) is representable
  by \(\poly_{\typeof}^{\nat}\Ty\), and
  \(\widetilde{\iL}\El_{\nat} : \cart^{\op} \to \Set\) is representable
  by \(\poly_{\typeof}^{\nat}\El\).
\end{lemma}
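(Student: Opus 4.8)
The plan is to prove the two statements simultaneously by induction on \(\nat\), reducing the inductive step to the case \(\nat=0\) together with a single structural isomorphism. By \cref{exm:family-theory} we have \(\poly\Ty_{\nat}\cong\Ty_{\nat+1}\) and \(\poly\El_{\nat}\cong\El_{\nat+1}\), and by \cref{prop:dtt-family-structure,thm:family-repr} the functor \(\poly\) is isomorphic to the polynomial functor \(\poly_{\typeof_{0}}\). On the target side, \(\cart(\argu,\typeof)\) is exponentiable in \([\cart^{\op},\Set]\), since presheaf categories are locally cartesian closed; write \(\poly'\) for its associated polynomial functor. By the partial-product universal property of \(\poly'\) and the fact that the Yoneda embedding preserves pullbacks, one gets for any presheaf \(F\) and any \(\obj\in\cart\) a natural identification
\[
  \poly'(F)(\obj)\;\cong\;\textstyle\sum_{b\colon\obj\to\Ty}F(\obj\times_{\Ty}\El),
\]
the pullback being along \(b\). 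The same universal property, read for representables, gives \(\poly'(\cart(\argu,\obj))\cong\cart(\argu,\poly_{\typeof}\obj)\) (this is also an instance of \cref{prop:preserve-poly} for the cartesian, fully faithful embedding). Granting the base case and a natural isomorphism \(\widetilde{\iL}\poly\gat\cong\poly'(\widetilde{\iL}\gat)\), we then compute \(\widetilde{\iL}\Ty_{\nat}\cong(\poly')^{\nat}\widetilde{\iL}\Ty_{0}\cong(\poly')^{\nat}\cart(\argu,\Ty)\cong\cart(\argu,\poly_{\typeof}^{\nat}\Ty)\), and likewise for \(\El_{\nat}\).

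For the base case, an interpretation \(\Ty_{0}\to\iL\obj\) is, up to equivalence, a single type over the empty context of \(\iL\obj\), and an interpretation \(\El_{0}\to\iL\obj\) is such a type together with a term of it. The type (resp.\ term) symbols of \(\iL\obj\) of arity \(()\) are exactly the arrows \(\obj\to\Ty\) (resp.\ \(\obj\to\El\)), so \(b\mapsto b(\,)\) and \(e\mapsto e(\,)\) give maps \(\cart(\obj,\Ty)\to\widetilde{\iL}\Ty_{0}(\obj)\) and \(\cart(\obj,\El)\to\widetilde{\iL}\El_{0}(\obj)\), clearly natural in \(\obj\). Surjectivity is a normalization statement: using the substitution equations of \(\iL\obj\) in the case \(\natI=0\), any application \(\ty(t_{0},\dots,t_{\nat-1})\) of a symbol to closed arguments is provably equal to a symbol of strictly smaller arity, so by induction on the structure of the expression every closed type and term reduces to one of arity \(()\) (the \(\wk\)- and \(\proj\)-equations are not needed here, as there are no free variables). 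Injectivity amounts to faithfulness of these maps, which one checks by analysing the generating equations.

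For the inductive step I would establish \(\widetilde{\iL}\poly\gat\cong\poly'(\widetilde{\iL}\gat)\) as follows. Dualizing the adjunction of \cref{lem:polynomial-3}, which exhibits \((\poly,\leg)\) as a right adjoint of \((\argu\times_{\Ty_{0}}\El_{0})\) in \(\thdtt\), into \(\GAT=\DTT\) (and extending it from finite theories to all theories by finitariness) yields a natural bijection
\[
  \GAT(\gat,\;Y\oplus_{\alpha}\El_{0})\;\cong\;(\Ty_{0}/\GAT)\bigl((\poly\gat,\leg),(Y,\alpha)\bigr),
\]
where \(Y\oplus_{\alpha}\El_{0}\) is \(Y\) extended by a fresh constant term of the type named by \(\alpha\colon\Ty_{0}\to Y\), i.e.\ the pushout of \cref{prop:theories-pushouts}. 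Taking \(Y=\iL\obj\) and summing over \(\alpha\in\GAT(\Ty_{0},\iL\obj)\cong\cart(\obj,\Ty)\) gives
\[
  \widetilde{\iL}\poly\gat(\obj)=\GAT(\poly\gat,\iL\obj)\;\cong\;\textstyle\sum_{b\colon\obj\to\Ty}\GAT\bigl(\gat,\;\iL\obj\oplus_{b}\El_{0}\bigr).
\]
The crucial point is then the identification \(\iL\obj\oplus_{b}\El_{0}\cong\iL(\obj\times_{\Ty}\El)\); granting it, the right-hand side becomes \(\sum_{b}\widetilde{\iL}\gat(\obj\times_{\Ty}\El)\cong\poly'(\widetilde{\iL}\gat)(\obj)\) by the formula of the first paragraph.

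The main obstacle is this last identification \(\iL\obj\oplus_{b}\El_{0}\cong\iL(\obj\times_{\Ty}\El)\). Unwinding the construction of \(\iL\), the symbols of \(\iL(\obj\times_{\Ty}\El)\) — the arrows \(\obj\times_{\Ty}\El\to\poly_{\typeof}^{\nat}\BE\) — must be matched with the symbols of \(\iL\obj\) reindexed by the adjoined constant, and, more delicately, the three families of generating equations (coming from \(\wk\), \(\proj\) and \(\subst\)) must be shown to coincide on the two sides under this matching. This is precisely the place where the defining equations of \(\iL\) were engineered: the \(\proj\)-equation identifies the adjoined constant with the universal section \(\obj\times_{\Ty}\El\to\El\), while the \(\wk\)- and \(\subst\)-equations record how arrows through \(\obj\times_{\Ty}\El\) factor through the partial-product structure of \(\typeof\). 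I expect verifying this correspondence of presentations, rather than the formal bookkeeping of the induction, to be the technical heart of the proof, with the faithfulness half of the base case as the second nontrivial point.
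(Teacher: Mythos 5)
Your overall architecture is sound and genuinely different from the paper's: induction on \(\nat\), the formula \(\poly'(F)(\obj)\cong\sum_{b\colon\obj\to\Ty}F(\obj\times_{\Ty}\El)\) for the polynomial functor of \(\cart(\argu,\typeof)\) in presheaves, the dualization of \cref{lem:polynomial-3} into \(\GAT\), and the reduction of the inductive step to the identification \(\iL\obj\oplus_{b}\El_{0}\cong\iL(\obj\times_{\Ty}\El)\) are all correct in outline, whereas the paper proves both statements at once for all \(\nat\) by exhibiting mutually inverse maps \(\inc\) and \(\inc^{-1}\) directly. But the proposal has a genuine gap, and it sits exactly where your induction is load-bearing: injectivity in the base case. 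You say faithfulness of \(\cart(\obj,\Ty)\to\widetilde{\iL}(\Ty_{0},\obj)\) is checked ``by analysing the generating equations''; no such analysis can work. Every generating equation of \(\iL\obj\) is an identification, so inspecting them only ever produces more provable equalities; to show that distinct arrows \(b\neq b'\colon\obj\to\Ty\) are \emph{not} identified by the congruence they generate, you must exhibit a separating invariant, i.e.\ a model of the theory \(\iL\obj\) in which the arity-zero symbol attached to \(b\) is interpreted as \(b\) itself. This is precisely the paper's semantic interpretation \(\sem{\argu}\) of \(\iL\obj\) in \(\cart/\obj\) (contexts as objects of \(\cart/\obj\), types and terms as arrows into \(\Ty\) and \(\El\), higher-arity symbols via iterated use of \(\typeof^{*}\adj\typeof_{*}\), with soundness verified through \cref{eq:11,eq:12,eq:13}), and nothing in your proposal plays this role; a purely syntactic alternative would be a normalization/confluence argument, which is substantially harder, not easier. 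Since your inductive step decomposes \(\GAT(\poly\gat,\iL\obj)\) as a sum indexed by \(b\in\cart(\obj,\Ty)\), the gap propagates through the entire argument.

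Secondly, the identification \(\iL\obj\oplus_{b}\El_{0}\cong\iL(\obj\times_{\Ty}\El)\) is true, but deferring it defers essentially all of the paper's computational content rather than avoiding it. The backward interpretation must send a symbol \(c\colon\obj\times_{\Ty}\El\to\poly_{\typeof}^{\nat}\BE\) to \(\hat{c}(\el_{0},\var_{0},\dots,\var_{\nat-1})\), where \(\hat{c}\colon\obj\to\poly_{\typeof}^{\nat+1}\BE\) is the adjoint transpose, and checking that this respects the three families of generating equations requires identities of the form ``the transpose of \(\poly_{\typeof}^{\natI}\wk\circ(c,\ty')\) is \(\poly_{\typeof}^{\natI+1}\wk\circ(\hat{c},\hat{\ty}')\)'' and their \(\proj\)- and \(\subst\)-analogues --- exactly the adjunction computations the paper performs to establish \cref{eq:11,eq:12,eq:13}. (The composite-equals-identity checks, by contrast, are cheap: equivalence of interpretations is tested on symbols only, and each symbol needs one application of a generating equation plus a structural substitution.) So if you supply the semantic interpretation for faithfulness and carry out these transposition computations, your induction on \(\nat\) does go through and yields a correct, somewhat more structural variant of the paper's proof; as written, however, the two points you defer are the proof.
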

\Cref{prop:gat-induction,representability-lemma} imply \cref{item:1},
because the predicate ``\(\widetilde{\iL}\gat\) is representable'' is
stable under finite limits as \(\widetilde{\iL}\) preserves finite
limits and \(\cart\) has finite limits. \Cref{item:2} is the special
case of \cref{representability-lemma} when \(\nat = 0\). For
\cref{item:3}, it suffices by \cref{prop:preserve-poly} to show that
the canonical natural transformation
\(\functor\poly_{\typeof_{0}} \To \poly_{\typeof}\functor\) is an
isomorphism. Since \(\functor : \thgat \to \cart\) preserves finite
limits and \(\poly_{\typeof_{0}}\) and \(\poly_{\typeof}\) preserve
finite limits as functors \(\thgat \to \thgat/\Ty_{0}\) and
\(\cart \to \cart/\Ty\) respectively, the predicate ``the canonical
arrow
\(\functor\poly_{\typeof_{0}}\gat \to \poly_{\typeof}\functor\gat\) is
an isomorphism'' is stable under finite limits. Hence, by
\cref{prop:gat-induction}, it suffices to show the cases of
\(\gat = \Ty_{\nat}\) and \(\gat = \El_{\nat}\). But this follows from
\cref{representability-lemma} because
\(\poly_{\typeof_{0}}\Ty_{\nat} \cong \Ty_{\nat+1}\) and
\(\poly_{\typeof_{0}}\El_{\nat} \cong \El_{\nat+1}\)
(\cref{exm:family-theory}).

The rest of the subsection is devoted to proving
\cref{representability-lemma}. Observe that
\(\widetilde{\iL}(\Ty_{\nat}, \obj) = \GAT(\Ty_{\nat}, \iL\obj)\) is
the set of equivalence classes of types in \(\iL\obj\) with \(\nat\)
variables and that
\(\widetilde{\iL}(\El_{\nat}, \obj) = \GAT(\El_{\nat}, \iL\obj)\) is
the set of equivalence classes of terms in \(\iL\obj\) with \(\nat\)
variables. We have the map
\[
  \inc_{\Ty} : \cart(\obj, \poly_{\typeof}^{\nat}\Ty) \to
  \widetilde{\iL}(\Ty_{\nat}, \obj)
\]
that sends an arrow \(\ty : \obj \to \poly_{\typeof}^{\nat}\Ty\) to the
type
\[
  \var_{0} : \ty_{0}, \dots, \var_{\nat-1} : \ty_{\nat-1}(\var_{0},
  \dots, \var_{\nat-2}) \vdash \ty(\var_{0}, \dots, \var_{\nat-1}) :
  \s{Type}
\]
and the map
\[
  \inc_{\El} : \cart(\obj, \poly_{\typeof}^{\nat}\El) \to
  \widetilde{\iL}(\El_{\nat}, \obj)
\]
that sends an arrow \(\el : \obj \to \poly_{\typeof}^{\nat}\El\) to the
term
\[
  \var_{0} : \ty_{0}, \dots, \var_{\nat-1} : \ty_{\nat-1}(\var_{0},
  \dots, \var_{\nat-2}) \vdash \el(\var_{0}, \dots, \var_{\nat-1}) :
  \ty(\var_{0}, \dots, \var_{\nat-1}).
\]
We show that the maps \(\inc_{\Ty}\) and \(\inc_{\El}\) are
bijective so that \(\widetilde{\iL}\Ty_{\nat}\) and
\(\widetilde{\iL}\El_{\nat}\) are representable by
\(\poly_{\typeof}^{\nat}\Ty\) and \(\poly_{\typeof}^{\nat}\El\)
respectively.

Inverses of \(\inc_{\Ty}\) and \(\inc_{\El}\) are defined by
interpreting the generalized algebraic theory \(\iL\obj\) in the
category with families associated to the arrow
\(\typeof : \El \to \Ty\). By a standard argument in the semantics of
dependent type theory \parencite[for example][]{hofmann1997syntax}, we
can define an interpretation \(\sem{\argu}\) of contexts, types and
terms of \(\iL\obj\) as follows:
\begin{itemize}
\item a context \(\ctx\) is interpreted as an object \(\sem{\ctx}
  \in \cart/\obj\);
\item a type \(\ctx \vdash \ty : \s{Type}\) is interpreted as an
  arrow \(\sem{\ty} : \sem{\ctx} \to \Ty\) in \(\cart\);
\item a term \(\ctx \vdash \el : \ty\) is interpreted as an arrow
  \(\sem{\el} : \sem{\ctx} \to \El\) over \(\sem{\ty}\);
\item the empty context \(()\) is interpreted as the terminal object
  \(\term_{\obj}\) in \(\cart/\obj\);
\item a context extension \(\ctx, \var : \ty\) is interpreted as the
  pullback
  \[
    \begin{tikzcd}
      \sem{\ctx, \var : \ty}
      \arrow[r]
      \arrow[d] &
      \El
      \arrow[d,"\typeof"] \\
      \sem{\ctx}
      \arrow[r,"\sem{\ty}"'] &
      \Ty;
    \end{tikzcd}
  \]
\item the type symbol \(\ty\) corresponding to an arrow
  \(\ty : \obj \to \poly_{\typeof}^{0}\Ty \cong \Ty\) is interpreted
  as \(\ty\) itself;
\item the type symbol \(\ty\) corresponding to an arrow
  \(\ty : \obj \to \poly_{\typeof}^{\nat+1}\Ty\) is interpreted as the
  arrow \(\sem{\ty} : \sem{\ty_{\nat}}^{*}\El \to \Ty\)
  corresponding to \(\ty\) via (iterated use of) the adjunction
  \(\typeof^{*} \adj \typeof_{*}\);
\item the term symbol \(\el\) corresponding to an arrow \(\el : \obj
  \to \poly_{\typeof}^{0}\El \cong \El\) is interpreted as \(\el\)
  itself;
\item the term symbol \(\el\) corresponding to an arrow
  \(\el : \obj \to \poly_{\typeof}^{\nat+1}\El\) is interpreted as the
  arrow \(\sem{\el} : \sem{\ty_{\nat}}^{*}\El \to \El\)
  corresponding to \(\el\) via (iterated use of) the adjunction
  \(\typeof^{*} \adj \typeof_{*}\);
\item the weakening \(\ctx, \var : \ty, \ctxI \vdash \judg\) of
  \(\ctx, \ctxI \vdash \judg\) is interpreted as the composite
  \[
    \begin{tikzcd}
      \sem{\ctx, \var : \ty, \ctxI}
      \arrow[r,"\cong"] &
      \sem{\ctx, \ctxI} \times_{\sem{\ctx}} \sem{\ctx, \var : \ty}
      \arrow[r,"\prodpr_{1}"] &
      \sem{\ctx, \ctxI}
      \arrow[r,"\sem{\judg}"] &
      \BE
    \end{tikzcd}
  \]
  where \(\BE = \Ty\) or \(\BE = \El\);
\item the projection \(\ctx, \var : \ty \vdash \var : \ty\) is
  interpreted as the second projection
  \(\sem{\ctx, \var : \ty} \cong \sem{\ctx} \times_{\Ty} \El
  \overset{\prodpr_{2}}{\longrightarrow} \El\);
\item the substitution \(\ctx, {\ctxI}[\el/\var] \vdash
  {\judg}[\el/\var]\) of \(\ctx \vdash \el : \ty\) is interpreted as
  the composite
  \[
    \begin{tikzcd}
      \sem{\ctx, {\ctxI}[\el/\var]}
      \arrow[r,"\overline{\sem{\el}}"] &
      \sem{\ctx, \var : \ty, \ctxI}
      \arrow[r,"\sem{\judg}"] &
      \BE
    \end{tikzcd}
  \]
  where \(\BE = \Ty\) or \(\BE = \El\) and
  \[
    \begin{tikzcd}
      \sem{\ctx, {\ctxI}[\el/\var]}
      \arrow[r,"\overline{\sem{\el}}"]
      \arrow[d] &
      \sem{\ctx, \var : \ty, \ctxI}
      \arrow[d] \\
      \sem{\ctx}
      \arrow[r,"{(\id_{\sem{\ctx}}, \sem{\el})}"'] &
      \sem{\ctx, \var : \ty}
    \end{tikzcd}
  \]
  is a pullback.
\end{itemize}
We have to verify that the interpretation \(\sem{\argu}\) satisfies
the equational axioms of \(\iL\obj\). First, for a type \(\ctx \vdash
\ty : \s{Type}\) and a term \(\ctx \vdash \el : \ty\) over a context
of length \(\nat\), we write
\[
  \inc_{\Ty, \ctx}^{-1}(\ty) : \obj \to \poly_{\typeof}^{\nat}\Ty
\]
for the arrow corresponding to \(\sem{\ty} : \sem{\ctx} \to \Ty\)
via the adjunction \(\typeof^{*} \adj \typeof_{*}\) and write
\[
  \inc_{\El, \ctx}^{-1}(\el) : \obj \to \poly_{\typeof}^{\nat}\El
\]
for the arrow corresponding to \(\sem{\el} : \sem{\ctx} \to
\El\). We omit the subscripts \(_{\Ty}\), \(_{\El}\) and
\(_{\ctx}\) when they are clear from the context. Then we have the
following equations from which it follows that \(\sem{\argu}\)
satisfies the axioms of \(\iL\obj\):
\begin{itemize}
\item for the symbol \(\cst\) corresponding to an arrow \(\obj \to
  \poly_{\typeof}^{\nat}\BE\),
  \begin{equation}
    \label{eq:14}
    \inc^{-1}(\cst(\var_{0}, \dots, \var_{\nat-1})) = \cst
  \end{equation}
\item for the weakening \(\ctx, \var : \ty, \ctxI \vdash \judg\) of
  \(\ctx, \ctxI \vdash \judg\),
  \begin{equation}
    \label{eq:11}
    \inc_{\ctx, \var : \ty, \ctxI}^{-1}(\judg) =
    \poly_{\typeof}^{\natI}\wk_{\poly_{\typeof}^{\nat-\natI}\BE} \circ
    (\inc_{\ctx, \ctxI}^{-1}(\judg), \inc_{\ctx}^{-1}(\ty))
  \end{equation}
\item for the projection \(\ctx, \var : \ty \vdash \var : \ty\),
  \begin{equation}
    \label{eq:12}
    \inc_{\ctx, \var : \ty}^{-1}(\var) = \poly_{\typeof}^{\nat}\proj
    \circ \inc_{\ctx}^{-1}(\ty)
  \end{equation}
\item for the substitution \(\ctx, {\ctxI}[\el/\var] \vdash
  {\judg}[\el/\var]\) of \(\ctx \vdash \el : \ty\),
  \begin{equation}
    \label{eq:13}
    \inc_{\ctx, \ctxI}^{-1}({\judg}[\el/\var]) =
    \poly_{\typeof}^{\natI}\subst_{\poly_{\typeof}^{\nat-\natI} \circ
      (\inc_{\ctx, \var : \el, \ctxI}^{-1}(\judg),
      \inc_{\ctx}^{-1}(\el))}
  \end{equation}
\end{itemize}
where \(\natI\) and \(\nat\) are the lengths of \(\ctx\) and
\((\ctx, \ctxI)\) respectively. \Cref{eq:14} is immediate from the
definition. \Cref{eq:11} follows from the following correspondence
via the adjunction \(\typeof^{*} \adj \typeof_{*}\).
\[
  \begin{array}{c}
    \begin{tikzcd}[ampersand replacement=\&]
      \sem{\ctx, \var : \ty, \ctxI}
      \arrow[r,"\prodpr_{1}"] \&
      \sem{\ctx, \ctxI}
      \arrow[r,"\sem{\judg}"] \&
      \BE
    \end{tikzcd} \\
    \hline
    \begin{tikzcd}[ampersand replacement=\&]
      \sem{\ctx, \var : \ty}
      \arrow[r,"\prodpr_{1}"] \&
      \sem{\ctx}
      \arrow[r,"\sem{\judg}'"] \&
      \poly_{\typeof}^{\nat-\natI}\BE
    \end{tikzcd} \\
    \hline
    \begin{tikzcd}[ampersand replacement=\&]
      \sem{\ctx} \times_{\Ty} \El
      \arrow[r,"{(\sem{\judg}', \sem{\ty}) \times_{\Ty} \El}"] \&
      [8ex]
      (\poly_{\typeof}^{\nat-\natI}\BE \times \Ty) \times_{\Ty} \El
      \arrow[r,"\prodpr_{1}"] \&
      \poly_{\typeof}^{\nat-\natI}\BE
    \end{tikzcd} \\
    \hline
    \begin{tikzcd}[ampersand replacement=\&]
      \sem{\ctx}
      \arrow[r,"{(\sem{\judg}', \sem{\ty})}"] \&
      [6ex]
      \poly_{\typeof}^{\nat-\natI}\BE \times \Ty
      \arrow[r,"\wk_{\poly_{\typeof}^{\nat-\natI}\BE}"] \&
      \poly_{\typeof}^{1+\nat-\natI}\BE
    \end{tikzcd} \\
    \hline
    \begin{tikzcd}[ampersand replacement=\&]
      \obj
      \arrow[r,"{(\inc^{-1}(\judg), \inc^{-1}(\ty))}"] \&
      [8ex]
      \poly_{\typeof}^{\natI}(\poly_{\typeof}^{\nat-\natI}\BE \times \Ty)
      \arrow[r,"\poly_{\typeof}^{\natI}\wk_{\poly_{\typeof}^{\nat-\natI}\BE}"]
      \&
      [4ex]
      \poly_{\typeof}^{1+\nat}\BE
    \end{tikzcd}
  \end{array}
\]
\Cref{eq:12,eq:13} are similar.

The assignments \(\ty \mapsto \inc_{\Ty}^{-1}(\ty)\) and \(\el
\mapsto \inc_{\El}^{-1}(\el)\) define maps
\begin{gather*}
  \inc_{\Ty}^{-1} : \widetilde{\iL}(\Ty_{\nat}, \obj) \to \cart(\obj,
  \poly_{\typeof}^{\nat}\Ty) \\
  \inc_{\El}^{-1} : \widetilde{\iL}(\El_{\nat}, \obj) \to \cart(\obj,
  \poly_{\typeof}^{\nat}\El)
\end{gather*}
respectively. By \cref{eq:14}, we have \(\inc^{-1} \circ \inc = \id\)
for \(\inc = \inc_{\Ty}\) and \(\inc = \inc_{\El}\). For the
equation \(\inc \circ \inc^{-1} = \id\), recall that types and terms
in \(\iL\obj\) are built up with type and term symbols of \(\iL\obj\)
using weakening, projection and substitution. Then, by induction on
derivation, one can show that every type or term \(\expr\) in
\(\iL\obj\) is provably equal to
\(\inc^{-1}(\expr)(\var_{0}, \dots, \var_{\nat-1})\). For example,
when \(\expr\) is the substitution \({\tyI}[\el/\var_{\natI}]\) of
\(\var_{0} : \ty_{0}, \dots, \var_{\natI-1} : \ty_{\natI-1} \vdash \el
: \ty_{\natI}\) for \(\var_{\natI}\) in
\(\var_{0} : \ty_{0}, \dots, \var_{\nat} : \ty_{\nat} \vdash \tyI :
\s{Type}\), we have
\begin{align*}
  & {\tyI}[\el/\var_{\natI}] \\
  = \tag{induction hypothesis} \\
  & \inc^{-1}(\tyI)(\var_{0}, \dots, \var_{\natI-1},
    \inc^{-1}(\el)(\var_{0}, \dots, \var_{\natI-1}), \var_{\natI+1},
    \dots, \var_{\nat}) \\
  = \tag{axom of \(\iL\obj\)} \\
  & (\poly_{\typeof}^{\natI}\subst_{\poly_{\typeof}^{\nat-\natI}\Ty} \circ
    (\inc^{-1}(\tyI), \inc^{-1}(\el)))(\var_{0}, \dots,
    \var_{\natI-1}, \var_{\natI+1}, \dots, \var_{\nat}) \\
  = \tag{\cref{eq:13}} \\
  & \inc^{-1}({\tyI}[\el/\var_{\natI}])(\var_{0}, \dots,
    \var_{\natI-1}, \var_{\natI+1}, \dots, \var_{\nat}).
\end{align*}
The other cases are similar. Hence, \(\inc_{\Ty}\) and
\(\inc_{\El}\) are bijective with inverses \(\inc_{\Ty}^{-1}\) and
\(\inc_{\El}^{-1}\), respectively, which completes the proof of
\cref{representability-lemma}.

\subsection{Uniqueness of \(\functor : \thgat \to \cart\)}
\label{sec:uniqueness-functor-}

For the uniqueness of a cartesian functor
\(\functor : \thgat \to \cart\) sending \(\typeof_{0}\) to \(\typeof\) and
pushforwards along \(\typeof_{0}\) to those along \(\typeof\), let
\(\functor' : \thgat \to \cart\) be another cartesian functor
satisfying the same conditions. We construct a natural isomorphism
\(\trans : \functor \cong \functor'\) such that the diagram
\[
  \begin{tikzcd}
    & \typeof
    \arrow[dl, "\cong"']
    \arrow[dr, "\cong"] \\
    \functor\typeof_{0}
    \arrow[rr, "\trans_{\typeof_{0}}"'] & &
    \functor'\typeof_{0}
  \end{tikzcd}
\]
commutes and show that such a natural isomorphism is unique. The idea
is to construct a natural isomorphism between the ``externalizations''
\(\iL \cong \iL' : \cart^{\op} \to \GAT\), and then the Yoneda Lemma
implies that it determines a natural isomorphism
\(\functor \cong \functor'\).

Let \(\iL' : \cart^{\op} \to \GAT\) be the functor corresponding to
the composite of \(\functor'\) and the Yoneda embedding
\(\cart \to [\cart^{\op}, \Set]\) via the equivalence
\([\cart^{\op}, \GAT] \simeq \Cart(\thgat, [\cart^{\op},
\Set])\). Concretely, \(\iL'\obj\) for \(\obj \in \cat\) is given by
the filtered colimit
\[
  \iL'\obj \cong \colim_{(\gat, \arr) \in (\obj \downarrow
    \functor')}\gat.
\]
We have a natural transformation \(\itpr : \iL \To
\iL' : \cart^{\op} \to \GAT\) defined as follows:
\begin{itemize}
\item for the type symbol \(\ty\) corresponding to an arrow
  \(\ty : \obj \to \poly_{\typeof}^{\nat}\Ty \cong \functor'\Ty_{\nat}\),
  we define \(\itpr_{\obj}(\ty)\) to be the type in \(\iL'\obj\)
  corresponding to the inclusion \(\Ty_{\nat} \to \iL'\obj\) at
  \((\Ty_{\nat}, \ty) \in (\obj \downarrow \functor')\);
\item for the term symbol \(\el\) corresponding to an arrow
  \(\el : \obj \to \poly_{\typeof}^{\nat}\El \cong \functor'\El_{\nat}\),
  we define \(\itpr_{\obj}(\el)\) to be the term in \(\iL'\obj\)
  corresponding to the inclusion \(\El_{\nat} \to \iL'\obj\) at
  \((\El_{\nat}, \el) \in (\obj \downarrow \functor')\).
\end{itemize}
By Yoneda, \(\itpr\) corresponds to a natural transformation
\(\trans : \functor \To \functor' : \thgat \to \cart\).

By definition, the diagrams
\begin{gather*}
  \begin{tikzcd}[ampersand replacement=\&]
    \cart(\obj, \Ty)
    \arrow[d,"\cong"']
    \arrow[r,"\cong"] \&
    [6ex]
    \cart(\obj, \functor'\Ty_{0})
    \arrow[d,"\cong"] \\
    \GAT(\Ty_{0}, \iL\obj)
    \arrow[r,"{\GAT(\Ty_{0}, \itpr_{\obj})}"'] \&
    \GAT(\Ty_{0}, \iL'\obj)
  \end{tikzcd}
  \\
  \begin{tikzcd}[ampersand replacement=\&]
    \cart(\obj, \El)
    \arrow[d,"\cong"']
    \arrow[r,"\cong"] \&
    [6ex]
    \cart(\obj, \functor'\El_{0})
    \arrow[d,"\cong"] \\
    \GAT(\El_{0}, \iL\obj)
    \arrow[r,"{\GAT(\El_{0}, \itpr_{\obj})}"'] \&
    \GAT(\El_{0}, \iL'\obj)
  \end{tikzcd}
\end{gather*}
commute for all objects \(\obj \in \cart\), and thus the diagrams
\begin{gather}
  \label[diagram]{eq:2}
  \begin{tikzcd}[ampersand replacement=\&]
    \& \Ty
    \arrow[dl,"\cong"']
    \arrow[dr,"\cong"] \\
    \functor\Ty_{0}
    \arrow[rr,"\trans_{\Ty_{0}}"'] \& \&
    \functor'\Ty_{0}
  \end{tikzcd}
  \\
  \label[diagram]{eq:3}
  \begin{tikzcd}[ampersand replacement=\&]
    \& \El
    \arrow[dl,"\cong"']
    \arrow[dr,"\cong"] \\
    \functor\El_{0}
    \arrow[rr,"\trans_{\El_{0}}"'] \& \&
    \functor'\El_{0}
  \end{tikzcd}
\end{gather}
commute. This also shows that \(\trans_{\Ty_{0}}\) and
\(\trans_{\El_{0}}\) are isomorphisms. Since \(\functor\) and
\(\functor'\) preserve finite limits and carry pushforwards along
\(\typeof_{0}\) to those along \(\typeof\), every component
\(\trans_{\gat}\) is an isomorphism by \cref{prop:gat-induction}.

It remains to show the uniqueness of such a natural isomorphism
\(\trans\). Let \(\trans' : \functor \To \functor'\) be another such
natural isomorphism. Then similar diagrams to \labelcref{eq:2,eq:3}
for \(\trans'\) commute, which implies that \(\trans'\) agrees with
\(\trans\) at \(\Ty_{0}\) and \(\El_{0}\). By
\cref{prop:gat-induction}, \(\trans'\) and \(\trans\) are equal.

We have constructed a cartesian functor \(\functor : \thgat \to \cart\)
sending \(\typeof_{0}\) to \(\typeof\) and pushforwards along
\(\typeof_{0}\) to those along \(\typeof\) and proved that such a functor
is unique up to unique isomorphism. This completes the proof of
\cref{thm:gat-ump}.

\subsection{Variants}
\label{sec:generalization}

We can obtain universal properties of \(\thdtt_{\tth}\) for another
type theory \(\tth\) in a similar way to \cref{thm:gat-ump}. Since a
finite \(\tth\)-theory is presented by a list of symbols and axioms,
\cref{prop:gat-induction} still holds for \(\thdtt_{\tth}\). Hence, a
minor modification of the proof of \cref{thm:gat-ump} works for a
variety of type theories.

\begin{example}
  Consider the type theory \(\tth_{\Pi}\) with \(\Pi\)-types
  (\cref{exm:tth-Pi}). The category \(\thdtt_{\tth_{\Pi}}\) contains a
  commutative diagram
  \begin{equation}
    \begin{tikzcd}
      \El_{1}
      \arrow[r,"\lambda"]
      \arrow[d,"\typeof_{1}"'] &
      \El_{0}
      \arrow[d,"\typeof_{0}"] \\
      \Ty_{1}
      \arrow[r,"\Pi"'] &
      \Ty_{0}
    \end{tikzcd}\label[diagram]{eq:1}
  \end{equation}
  where \(\Pi : \Ty_{1} \to \Ty_{0}\) is the arrow represented by the
  interpretation \(\Ty_{0} \to \Ty_{1}\) defined by
  \begin{align*}
    \ty_{0}
    &\mapsto ({} \vdash \textstyle{\prod_{\var_{0} :
      \ty_{0}}\ty_{1}(\var_{0})} : \s{Type})
  \end{align*}
  and \(\lambda : \El_{1} \to \El_{0}\) is the arrow represented by
  the interpretation \(\El_{0} \to \El_{1}\) defined by
  \begin{align*}
    \ty_{0}
    &\mapsto ({} \vdash \textstyle{\prod_{\var_{0} :
      \ty_{0}}\ty_{1}(\var_{0})} : \s{Type}) \\
    \el_{0}
    &\mapsto ({} \vdash \lambda\var_{0}.\el_{1}(\var_{0}) :
      \textstyle{\prod_{\var_{0} : \ty_{0}}\ty_{1}(\var_{0})}).
  \end{align*}
  The last three rules of \cref{fig:pi-types} force \cref{eq:1} to be
  a pullback. \(\thdtt_{\tth_{\Pi}}\), together with \cref{eq:1}, is
  universal in the following sense.
\end{example}

\begingroup
\def\PiI{P}
\def\lambdaI{l}

\begin{theorem}
  \label{thm:tth-pi-ump}
  For any cartesian category \(\cart\), exponentiable arrow
  \(\typeof : \El \to \Ty\) and pullback square of the form
  \begin{equation}
    \label[diagram]{eq:4}
    \begin{tikzcd}
      \poly_{\typeof}\El \arrow[r,"\lambdaI"] \arrow[d,"\poly_{\typeof}\typeof"']
      & \El
      \arrow[d,"\typeof"] \\
      \poly_{\typeof}\Ty \arrow[r,"\PiI"'] & \Ty,
    \end{tikzcd}
  \end{equation}
  there exists a unique, up to unique isomorphism, cartesian functor
  \(\functor : \thdtt_{\tth_{\Pi}} \to \cart\) such that
  \(\typeof \cong \functor\typeof_{0}\) and \(\functor\) sends
  pushforwards along \(\typeof_{0}\) to those along \(\typeof\) and
  \cref{eq:1} to \cref{eq:4}.
\end{theorem}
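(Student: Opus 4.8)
The plan is to follow the proof of \cref{thm:gat-ump} almost verbatim, the only genuinely new ingredient being the interpretation of $\Pi$-types supplied by the pullback \cref{eq:4}. As before, I would first build an externalization functor $\iL : \cart^{\op} \to \DTT_{\tth_{\Pi}}$ whose value $\iL\obj$ carries one type symbol for each arrow $\obj \to \poly_{\typeof}^{\nat}\Ty$, one term symbol for each arrow $\obj \to \poly_{\typeof}^{\nat}\El$, and the same four families of weakening/projection/substitution equations as in the $\tth_0$ case. Since $\tth_{\Pi}$ extends $\tth_0$ only by grammar and inference rules, and there is no symbol naming a $\Pi$-type, $\iL\obj$ needs no extra generating symbols or axioms; the $\Pi$-structure lives entirely in the ambient type theory. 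Transposing along the equivalence $\DTT_{\tth_{\Pi}} \simeq \Cart(\thdtt_{\tth_{\Pi}}, \Set)$ (\cref{eq:31}, valid for $\tth_{\Pi}$ by \cref{rem:locally-pres}) yields a cartesian functor $\widetilde{\iL} : \thdtt_{\tth_{\Pi}} \to [\cart^{\op}, \Set]$, and, exactly as for \cref{item:1,item:2,item:3}, it suffices to prove the analogue of \cref{representability-lemma}: $\widetilde{\iL}\Ty_{\nat}$ is representable by $\poly_{\typeof}^{\nat}\Ty$ and $\widetilde{\iL}\El_{\nat}$ by $\poly_{\typeof}^{\nat}\El$. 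The induced functor $\functor : \thdtt_{\tth_{\Pi}} \to \cart$ is then automatically cartesian, sends $\typeof_0$ to $\typeof$, and carries pushforwards along $\typeof_0$ to those along $\typeof$ by \cref{prop:preserve-poly} once $\functor\poly_{\typeof_0} \To \poly_{\typeof}\functor$ is shown to be an isomorphism, which reduces by \cref{prop:gat-induction} (still valid for $\thdtt_{\tth_{\Pi}}$) to the cases $\Ty_{\nat}$, $\El_{\nat}$ handled by the representability lemma.

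The heart of the argument is extending the representability lemma. The maps $\inc_{\Ty}$ and $\inc_{\El}$ are defined as before, and their inverses come from interpreting $\iL\obj$ in the category with families associated to $\typeof$. I would extend the interpretation $\sem{\argu}$ to the new formers using \cref{eq:4}: given $\sem{\ty} : \sem{\ctx} \to \Ty$ and $\sem{\tyI} : \sem{\ctx, \var : \ty} \to \Ty$, the pair transposes across $\typeof^{*} \adj \typeof_{*}$ to an arrow $\sem{\ctx} \to \poly_{\typeof}\Ty$ whose composite with $\PiI$ interprets $\textstyle{\prod_{\var : \ty}\tyI}$; likewise $\lambdaI$ interprets $\lambda$-abstraction, and application is recovered from the universal property of \cref{eq:4}. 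The pullback condition is precisely what realizes the bijection validating the $\beta$- and $\eta$-rules (the last three rules of \cref{fig:pi-types}). With $\sem{\argu}$ in hand I would augment the induction on derivation proving $\inc \circ \inc^{-1} = \id$ by the cases for $\Pi$, $\lambda$ and application, governed by the computation formulas
\begin{align*}
  \inc_{\ctx}^{-1}(\textstyle{\prod_{\var : \ty}\tyI})
  &= \poly_{\typeof}^{\nat}\PiI \circ \inc_{\ctx, \var : \ty}^{-1}(\tyI) \\
  \inc_{\ctx}^{-1}(\lambda(\var : \ty).\elI)
  &= \poly_{\typeof}^{\nat}\lambdaI \circ \inc_{\ctx, \var : \ty}^{-1}(\elI),
\end{align*}
with $\nat$ the length of $\ctx$, proved exactly like \cref{eq:11,eq:12,eq:13} by transposing across $\typeof^{*} \adj \typeof_{*}$. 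The targets are correct because $\poly_{\typeof}^{\nat+1}\BE = \poly_{\typeof}^{\nat}(\poly_{\typeof}\BE)$, so $\poly_{\typeof}^{\nat}\PiI$ and $\poly_{\typeof}^{\nat}\lambdaI$ land in $\poly_{\typeof}^{\nat}\Ty$ and $\poly_{\typeof}^{\nat}\El$; in particular $\poly_{\typeof}^{\nat}\Ty$ is closed under forming $\Pi$-types, so $\inc$ is surjective.

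Granting the representability lemma, it remains to check that $\functor$ sends \cref{eq:1} to \cref{eq:4} and that $\functor$ is unique. For the first, $\functor$ preserves pullbacks and \cref{eq:1} is a pullback of the same shape as \cref{eq:4} with corners $\El_1 \cong \poly_{\typeof_0}\El_0$, $\Ty_1 \cong \poly_{\typeof_0}\Ty_0$, so $\functor$ carries its corners and vertical legs to those of \cref{eq:4}; it then suffices to identify $\functor\Pi = \PiI$ and $\functor\lambda = \lambdaI$, which is immediate from the representability identifications and the two computation formulas above applied (with $\ctx = ()$) to the interpretations defining $\Pi$ and $\lambda$. The uniqueness of $\functor$ up to unique isomorphism is proved verbatim as in \cref{sec:uniqueness-functor-}: from a second such $\functor'$ one builds $\iL'$, a natural transformation $\iL \To \iL'$, and, by Yoneda, a natural isomorphism $\trans : \functor \To \functor'$ agreeing with the chosen isomorphisms at $\Ty_0$ and $\El_0$; since $\functor$ and $\functor'$ preserve finite limits and pushforwards, \cref{prop:gat-induction} forces every component of $\trans$ to be invertible and forces $\trans$ to be unique. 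The extra requirement of sending \cref{eq:1} to \cref{eq:4} imposes nothing further, as the induction already bottoms out at $\Ty_0$ and $\El_0$.

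I expect the main obstacle to be the $\Pi$-type case of the representability lemma. One must verify that interpreting $\Pi$, $\lambda$ and application through \cref{eq:4} is well defined and validates \emph{every} rule of \cref{fig:pi-types} — in particular that it is the pullback property, not merely the commutativity, of \cref{eq:4} that is both used (for application and $\eta$) and sufficient — and that, in the presence of the new formers, the interpretation still respects the weakening/projection/substitution equations of $\iL\obj$ and the $\beta\eta$-equalities, so that $\inc^{-1}$ descends to equivalence classes of types and terms.
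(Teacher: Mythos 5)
There is a genuine gap, and it sits exactly at the one point where this theorem differs from \cref{thm:gat-ump}: your claim that \(\iL\obj\) ``needs no extra generating symbols or axioms; the \(\Pi\)-structure lives entirely in the ambient type theory.'' No new symbols are needed, but new \emph{axioms} are, and they are the entire content of the paper's proof: for each arrow \(\ty : \obj \to \poly_{\typeof}^{\nat+1}\Ty\) one adds to \(\iL\obj\) an equational axiom identifying the syntactically formed type \(\textstyle{\prod_{\var_{\nat} : \ty_{\nat}(\var_{0}, \dots, \var_{\nat-1})}\ty(\var_{0}, \dots, \var_{\nat})}\) with the symbol named by the composite of \(\ty\) with \(\poly_{\typeof}^{\nat}\) applied to the bottom arrow \(\poly_{\typeof}\Ty \to \Ty\) of \cref{eq:4}, and similarly an axiom identifying the syntactic \(\lambda\)-abstraction of each term symbol with the symbol named via the top arrow \(\poly_{\typeof}\El \to \El\) of \cref{eq:4}.

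Without these axioms the analogue of \cref{representability-lemma} fails. In \(\tth_{\Pi}\) the types of \(\iL\obj\) include all syntactically formed \(\Pi\)-types, and representability of \(\widetilde{\iL}\Ty_{\nat}\) requires every type of \(\iL\obj\) to be provably equal, \emph{inside the theory \(\iL\obj\)}, to a symbol applied to variables; this is what surjectivity of \(\inc_{\Ty}\), i.e.\ the identity \(\inc \circ \inc^{-1} = \id\), means. At the \(\Pi\)-case of your induction you need precisely a derivable equality in \(\iL\obj\) between the \(\Pi\)-type formed from the symbols \(\inc^{-1}(\ty)\) and \(\inc^{-1}(\tyI)\) and the single symbol corresponding to the arrow \(\inc^{-1}(\prod_{\var : \ty}\tyI)\). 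Your two ``computation formulas'' cannot deliver this: they are equations between arrows of \(\cart\), i.e.\ facts about the semantic interpretation \(\sem{\argu}\), not derivabilities in \(\iL\obj\); and no rule of \cref{fig:pi-types} (congruence, \(\beta\), \(\eta\)) nor any weakening/projection/substitution axiom of \(\iL\obj\) ever equates a \(\Pi\)-type with a symbol. Consequently, in your version of \(\iL\obj\) the \(\Pi\)-types of symbols are genuinely new equivalence classes, \(\inc_{\Ty}\) is not surjective, and \(\widetilde{\iL}\Ty_{\nat}\) is strictly larger than \(\cart(\argu, \poly_{\typeof}^{\nat}\Ty)\). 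The repair is exactly to promote your computation formulas from derived semantic equations into equational axioms of \(\iL\obj\); that is the paper's proof, and with it the rest of your argument (interpretation of \(\Pi\), \(\lambda\) and application through the pullback \cref{eq:4}, the uniqueness argument, and sending \cref{eq:1} to \cref{eq:4}) goes through as you describe.
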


\begin{remark}
  \label{rem:natural-model}
  In the natural model semantics of dependent type theory
  \parencite{awodey2018natural,newstead2018thesis}, \(\Pi\)-types are
  modeled by a pullback square \labelcref{eq:4} in a presheaf
  category. Therefore, \cref{thm:tth-pi-ump} implies that a natural
  model with \(\Pi\)-types can be identified with a functor from
  \(\thdtt_{\tth_{\Pi}}\) to a presheaf category preserving finite
  limits and pushforwards along \(\typeof_{0}\).
\end{remark}

\begin{proof}[Proof of \cref{thm:tth-pi-ump}]
  The proof of this universal property is almost the same as that of
  \cref{thm:gat-ump}, but we add to \(\iL\obj \in \DTT_{\tth_{\Pi}}\)
  equations for \(\Pi\)-types:
  \begin{itemize}
  \item an equation
    \begin{multline*}
      (\var_{0} : \ty_{0}, \dots, \var_{\nat-1} :
      \ty_{\nat-1}(\var_{0}, \dots, \var_{\nat-2})) \To \\
      \textstyle{\prod_{\var_{\nat} : \ty_{\nat}(\var_{0}, \dots,
          \var_{\nat-1})}\ty(\var_{0}, \dots, \var_{\nat})} =
      (\poly_{\typeof}^{\nat}\PiI \circ \ty)(\var_{0}, \dots,
      \var_{\nat-1})
    \end{multline*}
    for any arrow \(\ty : \obj \to \poly_{\typeof}^{\nat+1}\Ty\);
  \item an equation
    \begin{multline*}
      (\var_{0} : \ty_{0}, \dots, \var_{\nat-1} :
      \ty_{\nat-1}(\var_{0}, \dots, \var_{\nat-2})) \To \\
      \lambda(\var_{\nat} : \ty_{\nat}(\var_{0}, \dots, \var_{\nat-1})).\el(\var_{0}, \dots, \var_{\nat}) =
      (\poly_{\typeof}^{\nat}\lambdaI \circ \el)(\var_{0}, \dots,
      \var_{\nat-1})
    \end{multline*}
    for any arrow \(\ty : \obj \to \poly_{\typeof}^{\nat+1}\Ty\) and
    any arrow \(\el : \obj \to \poly_{\typeof}^{\nat+1}\El\) over
    \(\ty\).
  \end{itemize}
\end{proof}
\endgroup

Universal properties for type theories with inductive types get
complicated. We only describe the simplest case.

\begin{example}
  Let \(\tth_{\Empty}\) be the type theory with the empty type
  \(\Empty\) which is the inductive type without constructors and with
  the following elimination rule.
  \[
    \inferrule
    {\ctx, \var : \Empty \vdash \tyI : \s{Type} \\
      \ctx \vdash \el : \Empty}
    {\ctx \vdash \s{elim}_{\Empty}(\var.\tyI, \el) : {\tyI}[\el/\var]}
  \]
  Then \(\thdtt_{\tth_{\Empty}}\) contains the arrow
  \(\Empty : \term \to \Ty_{0}\) represented by the interpretation
  \(\Ty_{0} \to \term\) defined by
  \begin{align*}
    \ty_{0}
    &\mapsto ({} \vdash \Empty : \s{Type})
  \end{align*}
  and the arrow \(\s{elim}_{\Empty} : \Empty^{*}\Ty_{1} \to \El_{1}\)
  represented by the interpretation \(\El_{1} \to \Empty^{*}\Ty_{1}\)
  defined by
  \begin{align*}
    \ty_{0}
    &\mapsto ({} \vdash \Empty : \s{Type}) \\
    \ty_{1}
    &\mapsto (\var_{0} : \Empty \vdash \ty_{1}(\var_{0}) : \s{Type})
  \\
    \el_{1}
    &\mapsto (\var_{0} : \Empty \vdash
      \s{elim}_{\Empty}(\var.\ty_{1}(\var), \var_{0}) :
      \ty_{1}(\var_{0})).
  \end{align*}
  These arrows make the diagram
  \begin{equation}
    \begin{tikzcd}
      & \El_{1}
      \arrow[d,"\typeof_{1}"] \\
      \Empty^{*}\Ty_{1}
      \arrow[ur,"\s{elim}_{\Empty}"]
      \arrow[r]
      \arrow[d] &
      \Ty_{1}
      \arrow[d,"\ctxof_{0}"] \\
      \term
      \arrow[r,"\Empty"'] &
      \Ty_{0}
    \end{tikzcd}
    \label[diagram]{eq:5}
  \end{equation}
  commute. \(\thdtt_{\tth_{\Empty}}\), together with \cref{eq:5}, is
  universal in the following sense.
\end{example}

\begin{theorem}
  \def\EmptyI{Z}
  \def\elimI{j}

  For any cartesian category \(\cart\), exponentiable arrow
  \(\typeof : \El \to \Ty\) and arrows \(\EmptyI : \term \to \Ty\) and
  \(\elimI : \EmptyI^{*}\poly_{\typeof}\Ty \to \poly_{\typeof}\El\) such
  that the diagram
  \begin{equation}
    \begin{tikzcd}
      & \poly_{\typeof}\El
      \arrow[d,"\poly_{\typeof}\typeof"] \\
      \EmptyI^{*}\poly_{\typeof}\Ty
      \arrow[ur,"\elimI"]
      \arrow[r]
      \arrow[d] &
      \poly_{\typeof}\Ty
      \arrow[d] \\
      \term
      \arrow[r,"\EmptyI"'] &
      \Ty
    \end{tikzcd}
    \label[diagram]{eq:6}
  \end{equation}
  commutes, there exists a unique, up to unique isomorphism, cartesian
  functor \(\functor : \thdtt_{\tth_{\Empty}} \to \cart\) such that
  \(\typeof \cong \functor\typeof_{0}\) and \(\functor\) sends
  pushforwards along \(\typeof_{0}\) to those along \(\typeof\) and
  \cref{eq:5} to \cref{eq:6}.
\end{theorem}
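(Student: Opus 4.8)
The plan is to run the argument of \cref{thm:gat-ump} essentially unchanged, altering only the externalization functor $\iL : \cart^{\op} \to \DTT_{\tth_{\Empty}}$ so that it records the empty type together with its eliminator. For $\obj \in \cart$ the theory $\iL\obj$ would retain all the generalized-algebraic data used in that proof — a type symbol for every arrow $\obj \to \poly_{\typeof}^{\nat}\Ty$, a term symbol for every arrow $\obj \to \poly_{\typeof}^{\nat}\El$, and the equations encoding weakening, projection and substitution — and to this I would adjoin the two new pieces of structure named by \cref{eq:5}, classified semantically by $Z$ and $j$.

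Concretely, I would add to $\iL\obj$ two kinds of equation. The first identifies the empty type former $\Empty$ with the closed type classified by $Z$, namely with the type symbol attached to the arrow $Z \circ {!} : \obj \to \poly_{\typeof}^{0}\Ty \cong \Ty$, where ${!} : \obj \to \term$ is the unique arrow. The second identifies the eliminator with the arrow built from $j$: for every $\nat$ and every arrow $\tyI : \obj \to \poly_{\typeof}^{\nat}(Z^{*}\poly_{\typeof}\Ty)$ — a type family whose $\nat$-th base type is the empty type — I would add
\begin{multline*}
  (\var_{0} : \ty_{0}, \dots, \var_{\nat-1} : \ty_{\nat-1}(\var_{0}, \dots, \var_{\nat-2}), \var_{\nat} : \Empty) \To \\
  \s{elim}_{\Empty}(\var.\tyI(\var_{0}, \dots, \var_{\nat-1}, \var), \var_{\nat}) = (\poly_{\typeof}^{\nat}j \circ \tyI)(\var_{0}, \dots, \var_{\nat}).
\end{multline*}
The eliminator $\s{elim}_{\Empty}(\var.\tyI, \el)$ over an arbitrary context and arbitrary $\el : \Empty$ is then forced by these generic equations together with the weakening, projection and substitution equations already present, exactly as the general unit $\unit$ was recovered from $\wk$ and $\proj$ in \cref{sec:exponentiable-arrows}.

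With $\iL$ so defined, the remainder is formally identical to \cref{thm:gat-ump}. Transporting $\iL$ across $\DTT_{\tth_{\Empty}} \simeq \Cart(\thdtt_{\tth_{\Empty}}, \Set)$ yields a cartesian functor $\widetilde{\iL} : \thdtt_{\tth_{\Empty}} \to [\cart^{\op}, \Set]$, and I would prove the analogue of \cref{representability-lemma}, that $\widetilde{\iL}\Ty_{\nat}$ and $\widetilde{\iL}\El_{\nat}$ are represented by $\poly_{\typeof}^{\nat}\Ty$ and $\poly_{\typeof}^{\nat}\El$. The interpretation $\sem{\argu}$ of $\iL\obj$ in the category with families of $\typeof$ is extended by setting $\sem{\Empty} = Z \circ {!}$ and interpreting $\s{elim}_{\Empty}$ by $j$. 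Since \cref{prop:gat-induction} holds verbatim for $\thdtt_{\tth_{\Empty}}$ (as noted in \cref{sec:generalization}), representability at $\Ty_{\nat}$ and $\El_{\nat}$ propagates to all objects, giving a cartesian $\functor : \thdtt_{\tth_{\Empty}} \to \cart$ which sends $\typeof_{0}$ to $\typeof$ and pushforwards to pushforwards by \cref{prop:preserve-poly}; that it carries \cref{eq:5} to \cref{eq:6} is immediate from $\functor\Empty = Z$ and $\functor\s{elim}_{\Empty} = j$, which hold by construction.

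The one genuinely new point, and the step I expect to be the main obstacle, is the well-definedness of the eliminator equation and the verification that $\sem{\argu}$ validates it. The right-hand side $\poly_{\typeof}^{\nat}j \circ \tyI$ must be a term whose type is $\inc_{\Ty}(\tyI)$, i.e.\ the output type $\tyI[\el/\var]$ demanded by the elimination rule; this is exactly what the commutativity of \cref{eq:6} supplies, since the triangle with $\poly_{\typeof}\typeof$ forces the projection of $j$ to $\poly_{\typeof}\Ty$ to recover the given family over $\Empty$. Once this typing is secured, the equational axioms of $\iL\obj$ are checked by the same bookkeeping with $\inc^{-1}$, $\wk$, $\proj$ and $\subst$ used in \cref{representability-lemma}. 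Finally, uniqueness follows as in \cref{sec:uniqueness-functor-}: I would build a natural transformation $\iL \To \iL'$ of externalizations whose components at $\Empty$ and $\s{elim}_{\Empty}$ are pinned down by the requirement that \cref{eq:5} map to \cref{eq:6}, and then Yoneda together with \cref{prop:gat-induction} promotes it to the unique natural isomorphism $\functor \cong \functor'$ compatible with the given data.
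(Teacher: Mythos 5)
Your proposal is correct and is essentially the paper's own argument: the paper proves this theorem by declaring it ``analogous to \cref{thm:tth-pi-ump}'', whose proof likewise reruns \cref{thm:gat-ump} after adjoining to \(\iL\obj\) equations identifying the syntactic constructors with the semantically classified arrows, which is exactly the modification you spell out for \(\Empty\) and \(\s{elim}_{\Empty}\). Your added detail (the generic eliminator equation indexed by arrows \(\obj \to \poly_{\typeof}^{\nat}(Z^{*}\poly_{\typeof}\Ty)\), well-typedness via commutativity of \cref{eq:6}, and uniqueness via the externalizations) fills in precisely what the paper leaves implicit.
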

\begin{proof}
  Analogous to \cref{thm:tth-pi-ump}.
\end{proof}

\section*{Acknowledgement}
\label{sec:acknowledgement}

I gratefully acknowledge Benno van den Berg for helpful feedback and
corrections on drafts of this paper. I would also like to thank Thomas
Streicher for discussions. The referee made valuable suggestions which
greatly improved the paper. This work is part of the research
programme ``The Computational Content of Homotopy Type Theory'' with
project number 613.001.602, which is financed by the Netherlands
Organisation for Scientific Research (NWO).


\printbibliography

\end{document}